\documentclass[a4paper,reqno,11pt]{amsart}
\usepackage{geometry}
\geometry{hmargin={2.5cm,2.5cm}}
\geometry{vmargin={3cm,3cm}}

\usepackage[T1]{fontenc}
\usepackage[utf8]{inputenc}
\usepackage{xspace,
	amsfonts}
\usepackage{amsmath}
\usepackage{amssymb}
\usepackage{graphicx,
	bbm,
	tikz,
	subfig}
\usepackage{xfrac}
\usepackage{dsfont}
\usepackage{amsthm}
\usepackage{mathtools,mathrsfs}
\usepackage{enumitem}
\usepackage{verbatim}
\usepackage[autostyle]{csquotes}

\usepackage{cite}

\mathtoolsset{showonlyrefs}

\usepackage{xcolor}

\theoremstyle{plain}
\newtheorem{thm}{Theorem}[section]

\newtheorem{lem}[thm]{Lemma}
\theoremstyle{definition}
\newtheorem{defn}[thm]{Definition}
\newtheorem{rem} [thm] {Remark}

\newcommand{\R}{\mathbb{R}}
\newcommand{\Z}{\mathbb{Z}}
\newcommand{\N}{\mathbb{N}}
\newcommand{\V}{\mathbb{V}}
\newcommand{\E}{\mathcal{E}}

\newcommand{\G}{\mathcal G}
\newcommand{\vv}{\textsc{v}}

\usepackage{tikz}
\usetikzlibrary{shapes.geometric,calc,decorations.markings,math}

\tikzstyle{nodo}=[circle,draw,fill,inner sep=0pt,minimum size=%
1.5mm]
\tikzstyle{infinito}=[circle,inner sep=0pt,minimum size=0mm]
\tikzset{every loop/.style={min distance=10mm,in=300,out=240,looseness=10}}

\tikzset{place/.style={circle,thick,draw=blue!75,fill=blue!20,minimum
		size=6mm}}
\tikzset{place2/.style={circle,thick,draw=red!75,fill=red!20,minimum
		size=6mm}}

\title[Doubly nonlinear Schr\"odinger normalized ground states on 2D grids]{Doubly nonlinear Schr\"odinger normalized ground states on 2D grids: existence results and singular limits}

\author[ ]{Daniele Barbera}
\address[D. Barbera]{Politecnico di Torino, Dipartimento di Scienze Matematiche ``G.L. Lagrange'' Corso Duca degli Abruzzi 24, 10129, Torino, Italy.}
\email{daniele.barbera@polito.it}

\author[ ]{Filippo Boni}
\address[F. Boni]{Scuola Superiore Meridionale, Largo S. Marcellino, 10, 80138, Napoli, Italy.}
\email{f.boni@ssmeridionale.it}

\author[ ]{Simone Dovetta}
\address[S. Dovetta]{Politecnico di Torino, Dipartimento di Scienze Matematiche ``G.L. Lagrange'', Corso Duca degli Abruzzi 24, 10129, Torino, Italy.}
\email{simone.dovetta@polito.it}

\author[ ]{Lorenzo Tentarelli}
\address[L. Tentarelli]{Politecnico di Torino, Dipartimento di Scienze Matematiche ``G.L. Lagrange'', Corso Duca degli Abruzzi 24, 10129, Torino, Italy.}
\email{lorenzo.tentarelli@polito.it}

\begin{document}

\begin{abstract}
	We investigate the existence and the singular limit of normalized ground states for focusing doubly nonlinear Schr\"odinger equations with both standard and concentrated nonlinearities on two-dimensional square grids. First, we provide existence and non-existence results for such ground states depending on the values of the nonlinearity powers and on the structure of the set of vertices where the concentrated nonlinearities are located. Second, we prove that suitable piecewise-affine extensions of such states converge strongly in $H^1(\R^2)$ to ground states of corresponding doubly nonlinear models defined on the whole plane as the length of the edges in the grid tends to zero. This convergence is proved both for limit models with standard nonlinearities only and for models combining standard and singular nonlinearities concentrated on a line or on a strip.
\end{abstract}

\maketitle

\vspace{-.5cm}
\noindent {\footnotesize {AMS Subject Classification:} 35Q40, 35Q55, 35R06, 49J40.}

\noindent {\footnotesize {Keywords:} doubly nonlinear Schr\"odinger, point interactions, ground states, periodic graphs, singular limit.}

\section{Introduction}
In the present paper we study the existence and the singular limit of ground states for the doubly nonlinear Schr\"odinger energy functional
\begin{equation}
\label{eq:EG}
	E(u,\G_\varepsilon):=\frac12\|u'\|_{L^2(\G_\varepsilon)}^2-\frac{\alpha}p\|u\|_{L^p(\G_\varepsilon)}^p-\frac\beta q\sum_{\vv\in V}|u(\vv)|^q,
\end{equation}
where, for every $\varepsilon>0$, $\G_\varepsilon=(\V_{\G_\varepsilon},\mathbb E_{\G_\varepsilon})$ is the two-dimensional metric grid with edgelength $\varepsilon$ given by the subset of $\R^2$ with vertices on $\varepsilon\Z^2$ and edges between every couple of vertices at distance $\varepsilon$ (see Figure \ref{fig:grid}),  and $V\subset \V_{\G_\varepsilon}$ is a fixed subset of its vertices.  

A ground state of $E(\cdot,\G_\varepsilon)$ with mass $\mu>0$ is a function $u\in H_\mu^1(\G_\varepsilon)$ such that
\begin{equation}
	\label{eq:levEG}
	E(u,\G_\varepsilon)=\inf_{u\in H_\mu^1(\G_\varepsilon)}E(u,\G_\varepsilon)=:\E_{\G_\varepsilon}(\mu)\,,
\end{equation}
where
\[
H^1_\mu(\G_\varepsilon):=\left\{u\in H^1(\G_\varepsilon)\,:\, \|u\|_{L^2(\G_\varepsilon)}^2=\mu\right\}.
\]
Since $E(\cdot,\G_\varepsilon)$ combines a standard nonlinearity with concentrated nonlinearities located at the vertices in $V$, (now) classical arguments show that any ground state $u\in H_\mu^1(\G_\varepsilon)$ is a positive (up to a change of sign) solution of the following stationary nonlinear Schr\"odinger equation on $\G_\varepsilon$
\[
\begin{cases}
	-u''+\lambda u=\alpha|u|^{p-2}u & \forall\, e\in\mathbb{E}_{\G_\varepsilon}\\[.2cm]
	\displaystyle\sum_{e\succ\vv}u_e'(\vv)=0 & \forall\,\vv\in\V_{\G_\varepsilon}\setminus V\\[.4cm]
	\displaystyle\sum_{e\succ\vv}u_e'(\vv)=-\beta|u(\vv)|^{q-2}u(\vv) & \forall\,\vv\in V
\end{cases}
\]
with nonlinear $\delta$-type conditions at the vertices of $V$ and homogeneous Kirchhoff conditions at all other vertices, for a suitable Lagrange multiplier  $\lambda\in\R$ associated to the so-called mass constraint (i.e., the constraint on the $L^2$-norm).

In what follows, we consider positive parameters $\alpha,\beta>0$  and the purely $L^2$-subcritical regime of powers, namely
\[
2<p<6\,,\quad\,2<q<4\,.
\]
In this setting, our aim is twofold. First, we discuss the dependence of the existence of ground states on $p,q,\mu$ and on the structure of the set of vertices $V$ affected by the concentrated nonlinearity. Second, we investigate the asymptotic behaviour of such ground states on $\G_\varepsilon$ in the singular limit $\varepsilon\to0$.

\begin{figure}[t]
	\begin{center}
		\begin{tikzpicture}[xscale= 0.5,yscale=0.5]
			\draw[step=2,thin] (0,0) grid (8,8);
			\foreach \x in {0,2,...,8} \foreach \y in {0,2,...,8} \node at (\x,\y) [nodo] {};
			\foreach \x in {0,2,...,8}
			{\draw[dashed,thin] (\x,8.2)--(\x,9.2) (\x,-0.2)--(\x,-1.2) (-1.2,\x)--(-0.2,\x)  (8.2,\x)--(9.2,\x); }
			\draw[->,thin] (-.5,1.3)--(-.5,1.9);
			\draw[->,thin] (-.5,.7)--(-.5,.1);
			\node at (-.5,1) [infinito] {{\footnotesize $\varepsilon$}};
			\draw[->,thin] (1.3,-.5)--(1.9,-.5);
			\draw[->,thin] (.7,-.5)--(.1,-.5);
			\node at (1,-.5) [infinito] {{\footnotesize $\varepsilon$}};
		\end{tikzpicture}
	\end{center}
	\caption{The grid  $\G_\varepsilon$.}
	 \label{fig:grid}
\end{figure}
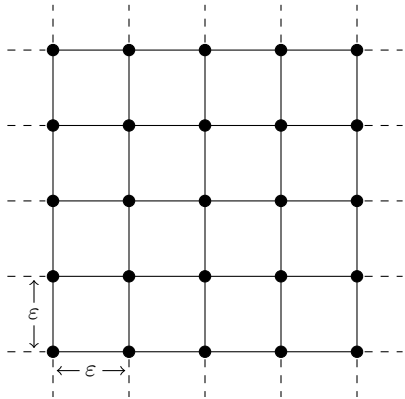

\smallskip
Two-dimensional grids are specific examples of metric graphs, i.e. locally one-dimensional structures obtained by gluing together several (possibly, infinitely many) intervals through the identification of some of their endpoints. The study of nonlinear Schr\"odinger models on metric graphs has been gathering a significant attention in the last years and it is by now a rather active research field. Even though a rich body of literature is nowadays available for models with standard nonlinearities only, that is $\beta=0$ in \eqref{eq:EG} (see e.g. \cite{AST1, AST2, AST3, ACT, ACT2, BMP, BDL1, BDL2, BCJS, CJS, DDGS, DDL, KMPX, KNP, NP, PS, PSV} and references therein), the analysis of doubly nonlinear models involving also $\delta$-type nonlinearities has been started only recently on graphs with finitely many edges in \cite{ABD22, BD22, BD21, LX} (see also \cite{ACFN1, ACFN2, BC23, DS25} for models combining standard nonlinearities with linear concentrated terms). 

Among metric graphs, infinite periodic ones are somehow peculiar, as they combine the typical one-dimensional microscale of metric graphs (the scale of single edges) with a high-dimensional macroscale determined by the degree of periodicity of the structure. This is clearly seen e.g. when thinking of the grid $\G_\varepsilon$ with $\varepsilon\sim0$, that gives a fine approximation of the whole plane $\R^2$ made of one-dimensional intervals.
Such specific co-existence of scales with different dimensions suggests the potential of periodic graphs to serve as a general tool to approximate high-dimensional models posed in full Euclidean spaces with suitable one-dimensional counterparts. Concretely, if it were possible to show that the solutions of a certain problem on the grid $\G_\varepsilon$ are close (in some sense) to those of a limit problem in $\R^2$ when the length of the edges $\varepsilon$ is sufficiently small, then one would obtain a theoretical bridge between the two models that would allow to conveniently switch from one to the other.

In the context of nonlinear Schr\"odinger equations, the validity of this approximation scheme has been confirmed recently for the ground states of the functional with the sole standard nonlinearity
\begin{equation}
\label{eq:b=0}
\frac12\|u'\|_{L^2(\G_\varepsilon)}^2-\frac1p\|u\|_{L^p(\G_\varepsilon)}^p\,.
\end{equation}
The existence of ground states at fixed mass for this problem has been settled in \cite{ADST19}, where the model was already shown to exhibit a mixture of purely one-dimensional and two-dimensional features (see also \cite{AD, ADR, DT} for analogous results in similar settings), and then in \cite{D24} it has been proved that, for every $p\in(2,4)$ and $\mu>0$, suitable extensions to $\R^2$ of properly scaled sequences of ground states $u_\varepsilon$ of \eqref{eq:b=0} converge to the ground states in $H_\mu^1(\R^2)$ of the limit functional
\[
\frac12\|\nabla u\|_{L^2(\R^2)}^2-\frac1p\|u\|_{L^p(\R^2)}^p\,.
\]
Our main goal in this work is to push forward this kind of study on grids, extending the scope of the analysis to \eqref{eq:EG} involving also concentrated nonlinearities.

\smallskip
The first step of this program requires to develop a general existence theory for ground states of \eqref{eq:EG} on two-dimensional grids. In particular, existence of ground states is far from obvious since it is highly sensitive to the interplay between the two nonlinearities and the specific structure of the set $V$ of the vertices carrying the concentrated nonlinearity. In fact, both the actual values of the nonlinearity powers and the set $V$ have already been shown to play a crucial role in determining existence of ground states for the models with a single nonlinearity, in the already mentioned paper \cite{ADST19} for the standard nonlinearity only (i.e. $\beta=0$ in \eqref{eq:EG}), and in the recent work \cite{BDS23} for the concentrated nonlinearity only (i.e. $\alpha=0$ in \eqref{eq:EG}). The existence results we report here provide an extension of these former analyses to the doubly nonlinear setting.

As for the set $V$, we consider both the case of finitely many nonlinear vertices (i.e. $\#V<+\infty$) and that of infinitely many ones (i.e. $\#V=+\infty$). In the latter, since $V$ will clearly be noncompact, it is evident that there is no chance to restore compactness without further assumptions. To this extent, perhaps the most natural thing to do in this context is to explore the possible periodicity of the set $V$. In particular, we will consider $\Z$-periodic and $\Z^2$-periodic subsets of vertices, defined as follows.
\begin{defn}[$\Z$-periodic set $V$]
	\label{def:Z-per}
	A subset $V\subset\V_{\G_1}$ is called $\Z$-periodic (Figure \ref{fig:per}(\small\textsc{A})) if there exists a vector $\vec{v}\in \Z^{2}\setminus\{(0,0)\}$ such that
	\begin{itemize}
		\item[$(i)$] $V=V+ k\vec{v}$, for every $k\in \Z$, and
		\item[$(ii)$] there exist $P_0\in \R^2$ and $r>0$ such that $|(\vv-P_0)\cdot \vec{v}^\perp|\leq r$ for every $\vv\in V$.
	\end{itemize}
	For $\varepsilon\neq1$, a subset $V\subset\V_{\G_\varepsilon}$ is called $\Z$-periodic if $V=\varepsilon V'$, for some $\Z$-periodic set $V'\subset\V_{\G_1}$.
\end{defn}

\begin{defn}[$\Z^2$-periodic set $V$]
	\label{def:Z2-per}
	A subset $V\subset\V_{\G_1}$ is called $\Z^{2}$-periodic (Figure \ref{fig:per}(\small\textsc{B})) if there exist two linearly independent vectors $\vec{v}_1, \vec{v}_2\in \Z^{2}\setminus\{(0,0)\}$ such that
	\begin{equation*}
		V=V+ k_{1}\vec{v}_1+ k_{2}\vec{v}_2\qquad\forall\,k_{1},k_{2}\in \Z\,.
	\end{equation*}
	For $\varepsilon\neq1$, a subset $V\subset\V_{\G_\varepsilon}$ is called $\Z^2$-periodic if $V=\varepsilon V'$, for some $\Z^2$-periodic set $V'\subset\V_{\G_1}$.
\end{defn}

\begin{figure}[t]
	\centering
	\subfloat[ ]{\includegraphics[width=0.4\columnwidth]{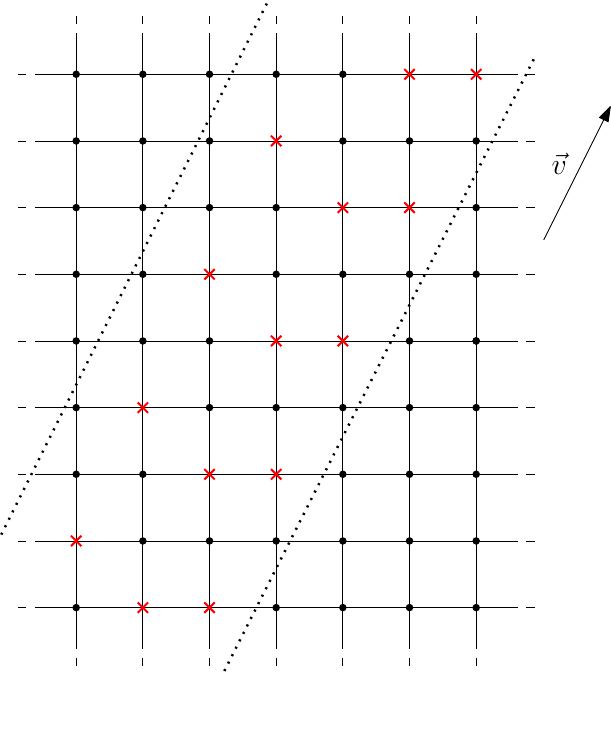}}
	\quad\quad
	\subfloat[ ]{\includegraphics[width=0.4\columnwidth]{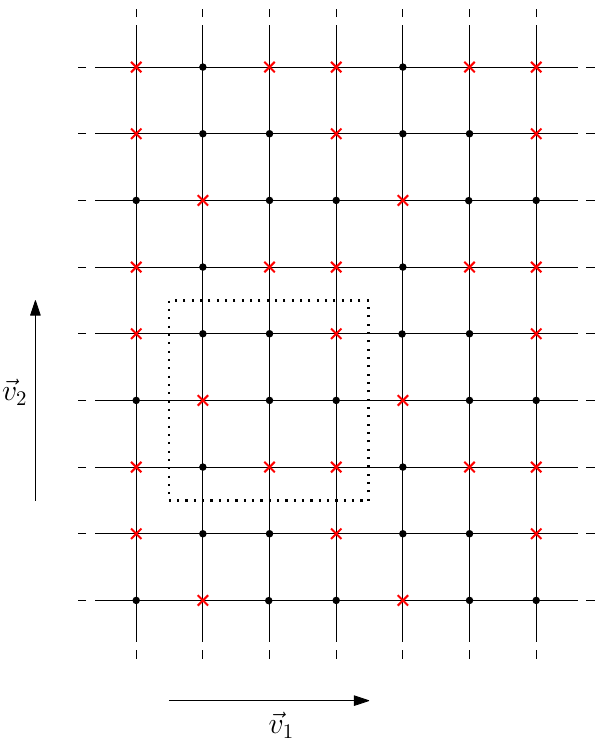}}
	\caption{Examples of a $\Z$-periodic ({\small\textsc{A}}) and a $\Z^2$-periodic ({\small\textsc{B}}) subset $V$ of vertices in a two-dimensional grid (the vertices in $V$ are denoted by red crosses).}
	\label{fig:per}
\end{figure}
We can then collect our main existence results for ground states of \eqref{eq:EG} in the next three theorems, considering finite, $\Z$-periodic and $\Z^2$-periodic sets $V$, respectively.
\begin{thm}
	\label{thm:GScomp}
	Let $p\in(2,6)$, $q\in(2,4)$, $\alpha,\beta>0$ and $V\subset\V_{\G_\varepsilon}$ be such that $\# V<+\infty$. There results that:
	\begin{itemize}
		\item[(i)] if $p\in(2,4)$, then $\E_{\G_\varepsilon}(\mu)<0$ and ground states exist for every $\mu>0$;
		\item[(ii)] if $p\in[4,6)$, then there exists $\overline\mu:=\overline\mu(p,q,\alpha,\beta,V, \varepsilon)>0$ such that
		\[
		\E_{\G_\varepsilon}(\mu)\begin{cases}
		=0 & \text{if }\mu\in(0,\overline\mu]\\[.2cm]
		<0 & \text{if }\mu>\overline\mu\,,
		\end{cases}
		\]
		and ground states exist if $\mu>\overline\mu$ and do not exist if $\mu\in(0,\overline\mu)$.
	\end{itemize}
\end{thm}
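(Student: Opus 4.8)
The plan is to combine sharp functional inequalities on $\G_\varepsilon$ with a scaling analysis of the infimum and a concentration–compactness argument for existence. First I would record the two Gagliardo–Nirenberg inequalities available on the grid: the one-dimensional one $\|u\|_{L^p(\G_\varepsilon)}^p\le C_p\|u'\|_{L^2(\G_\varepsilon)}^{(p-2)/2}\|u\|_{L^2(\G_\varepsilon)}^{(p+2)/2}$, coming from the local interval structure (critical at $p=6$), and the two-dimensional one $\|u\|_{L^p(\G_\varepsilon)}^p\le C_p'\|u'\|_{L^2(\G_\varepsilon)}^{p-2}\|u\|_{L^2(\G_\varepsilon)}^{2}$, coming from the planar macrostructure (critical at $p=4$), together with the pointwise bound $|u(\vv)|^2\le 2\|u'\|_{L^2(\G_\varepsilon)}\|u\|_{L^2(\G_\varepsilon)}$ valid on the noncompact grid (see \cite{ADST19, BDS23}). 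Writing $T:=\|u'\|_{L^2(\G_\varepsilon)}^2$, these give $E(u,\G_\varepsilon)\ge\frac12 T-AT^{(p-2)/4}-BT^{q/4}$ with exponents $(p-2)/4<1$ and $q/4<1$; hence $E(\cdot,\G_\varepsilon)$ is bounded below and coercive on $H^1_\mu(\G_\varepsilon)$, so that $\E_{\G_\varepsilon}(\mu)$ is finite and minimizing sequences are bounded in $H^1$.

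Next I would pin down the sign of $\E_{\G_\varepsilon}(\mu)$. A macroscopic test function $u(\vv)=\sigma\psi(\delta\vv)$ modelled on a fixed $\psi\in C_c^\infty(\R^2)$, with $\sigma$ fixing the mass, satisfies $\frac12\|u'\|^2\sim\delta^2$, $\frac\alpha p\|u\|_p^p\sim\delta^{p-2}$ and $\frac\beta q\sum_{\vv\in V}|u(\vv)|^q\sim\delta^q$ as $\delta\to0$. For $p\in(2,4)$ the gain $\delta^{p-2}$ dominates the cost $\delta^2$, so $\E_{\G_\varepsilon}(\mu)<0$ for every $\mu>0$, which together with the existence step below proves (i). For $p\in[4,6)$ the same family gives $E\to0^+$, hence $\E_{\G_\varepsilon}(\mu)\le0$, while a competitor concentrated on the edges at a vertex of $V$ (balancing the $1/\varepsilon^2$ kinetic cost against the $\mu^{p/2}$ and $\mu^{q/2}$ gains) yields $\E_{\G_\varepsilon}(\mu)<0$ once $\mu$ is large. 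The delicate point is the complementary lower bound $\E_{\G_\varepsilon}(\mu)=0$ for small $\mu$: a naive minimization of the Gagliardo–Nirenberg lower bound above is negative for every $\mu$ (both exponents are $<1$) and hence inconclusive. \textbf{This is the main obstacle.} To overcome it one must exploit that the cheap one-dimensional spreading which would make either nonlinear term beneficial is blocked by the transverse edges of the grid; concretely I would argue in two regimes — large $T$ (concentration), controlled by the one-dimensional inequality, and small $T$ (spreading), controlled by the two-dimensional one — and show that for $p\ge4$ and $q<4$ the two cannot be saturated simultaneously at small mass, so that $\frac12\|u'\|^2\ge\frac\alpha p\|u\|_p^p+\frac\beta q\sum_{\vv\in V}|u(\vv)|^q$ whenever $\mu$ lies below an explicit threshold.

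Granting the sign analysis, the threshold structure follows from monotonicity of $\mu\mapsto\E_{\G_\varepsilon}(\mu)$, which is non-increasing: given $\mu'>\mu$, appending to a near-optimizer at mass $\mu$ a far-away macroscopically spread bump carrying the extra mass $\mu'-\mu$ and, by the computation above, energy $o(1)$, gives $\E_{\G_\varepsilon}(\mu')\le\E_{\G_\varepsilon}(\mu)$. Thus $\{\mu:\E_{\G_\varepsilon}(\mu)=0\}$ is an interval, which by the two previous estimates equals $(0,\overline\mu]$ for some $\overline\mu\in(0,\infty)$, closed at $\overline\mu$ by continuity of $\E_{\G_\varepsilon}$. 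Non-existence for $\mu\in(0,\overline\mu)$ then follows by contradiction: if $u$ attained $\E_{\G_\varepsilon}(\mu)=0$, then using $\frac12\|u'\|^2=\frac\alpha p\|u\|_p^p+\frac\beta q\sum_{\vv\in V}|u(\vv)|^q$ one computes $\frac{d}{ds}E(\sqrt s\,u,\G_\varepsilon)\big|_{s=1}=\frac12[\alpha(\tfrac2p-1)\|u\|_p^p+\beta(\tfrac2q-1)\sum_{\vv\in V}|u(\vv)|^q]<0$, so $E(\sqrt s\,u,\G_\varepsilon)<0$ for $s>1$ close to $1$; this forces $\E_{\G_\varepsilon}(s\mu)<0$, i.e. $s\mu>\overline\mu$, and letting $s\to1^+$ gives $\mu\ge\overline\mu$, a contradiction.

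Finally, for $\mu$ with $\E_{\G_\varepsilon}(\mu)<0$ (all $\mu$ in case (i), $\mu>\overline\mu$ in case (ii)) I would prove existence by the direct method with a concentration–compactness analysis of a minimizing sequence $u_n$, bounded in $H^1$ by coercivity, so that $u_n\rightharpoonup u$ weakly and locally uniformly. Vanishing is excluded since $\|u_n\|_\infty\to0$ would force $\|u_n\|_p^p\to0$ and $\sum_{\vv\in V}|u_n(\vv)|^q\to0$, whence $\liminf E(u_n,\G_\varepsilon)\ge0>\E_{\G_\varepsilon}(\mu)$. Dichotomy is excluded by a strict subadditivity inequality $\E_{\G_\varepsilon}(\mu)<\E_{\G_\varepsilon}(\nu)+\E^0_{\G_\varepsilon}(\mu-\nu)$ for $0<\nu<\mu$, where $\E^0_{\G_\varepsilon}$ is the level of the standard-only functional \eqref{eq:b=0} — the effective problem at infinity, since $V$ is finite and the $\delta$-interaction is not seen far away: keeping all the mass near the compact set $V$ strictly gains the concentrated term, which splitting cannot. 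With neither vanishing nor dichotomy, $u_n$ converges strongly to a function of mass $\mu$ realizing $\E_{\G_\varepsilon}(\mu)$, i.e. a ground state. Here the finiteness of $V$ is exactly what makes the problem at infinity purely standard and the subadditivity strict.
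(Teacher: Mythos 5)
Your outline reproduces much of the paper's architecture (test-function sign analysis, scaling/monotonicity for the threshold structure, non-existence below the threshold by differentiating the mass rescaling), and those parts are correct. But the step you yourself flag as ``the main obstacle'' --- proving $\E_{\G_\varepsilon}(\mu)=0$ for small $\mu$ when $p\in[4,6)$ --- is a genuine gap, and the two-regime strategy you propose cannot be completed as described. The $L^p$ term is indeed absorbed for small mass by the two-dimensional inequality \eqref{GN-eps-interd.}, which for $p\in[4,6]$ gives $\frac{\alpha}{p}\|u\|_p^p\le C_p\,\mu^{\frac{p-2}{2}}\|u'\|_2^2$ with kinetic exponent exactly $2$; this is what the paper does. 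For the concentrated term, however, there is no ``two-dimensional'' counterpart: sup norms are not controlled by $H^1$ in dimension two, and on the grid the only pointwise bound with the correct scaling is the one-dimensional one, $|u(\vv)|^2\lesssim\|u\|_2\|u'\|_2$, which is saturated by bumps concentrated at scale $\ll\varepsilon$ around $\vv$. Writing $T=\|u'\|_2^2$, this yields $\sum_{\vv\in V}|u(\vv)|^q\lesssim\mu^{q/4}T^{q/4}$ with exponent $q/4<1$, so in your small-$T$ regime the bound exceeds $\frac12 T$ and is inconclusive --- exactly the failure you noted for the ``naive'' estimate, and your dichotomy has no inequality to invoke there. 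What is actually needed is the nontrivial threshold result for the \emph{pure point-nonlinearity} problem on the grid: for finite $V$ and $q\in(2,4)$, the level $\inf_{v\in H^1_\mu(\G)}\bigl(\frac12\|v'\|_2^2-c\,|v(\overline{\vv})|^q\bigr)$ equals $0$ at small mass. This does not follow from Gagliardo--Nirenberg exponents alone; it is the content of \cite[Theorem 1.3]{BDS23} (see Section 5 therein), which the paper imports as a black box and combines with \eqref{GN-eps-interd.} in a few lines. Without that input, or a proof of it, your case (ii) is not established.

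A secondary, more standard gap sits in your existence step: you exclude dichotomy by asserting the strict subadditivity $\E_{\G_\varepsilon}(\mu)<\E_{\G_\varepsilon}(\nu)+\E^0_{\G_\varepsilon}(\mu-\nu)$, with $\E^0_{\G_\varepsilon}$ the level of \eqref{eq:b=0}, but you never prove it, and such strict inequalities are precisely the delicate point of any concentration--compactness argument. The paper's Lemma \ref{l.ex.crit.} avoids introducing a problem at infinity altogether: after extracting a weak limit $u$ of a minimizing sequence, the intermediate case $0<\|u\|_2^2<\mu$ is ruled out by Brezis--Lieb splitting together with the elementary strict inequality $E(w,\G)>\frac{\|w\|_2^2}{\mu}\E_\G(\mu)$ valid for $0<\|w\|_2^2<\mu$, while the case $u\equiv0$ is ruled out by comparison with the standard-only problem of \cite{ADST19}: since $V$ is finite, local convergence kills the concentrated term along the sequence, and the standard-only minimizer (when its level is negative) is strictly positive, hence would strictly lower the doubly nonlinear energy --- a contradiction. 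Replacing your subadditivity claim by this argument would close the existence part; the small-mass statement in (ii) still requires the cited result of \cite{BDS23}.
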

\begin{thm}
	\label{thm:GSZ}
	Let $p\in(2,6)$, $q\in(2,4)$, $\alpha,\beta>0$ and $V\subset\V_{\G_\varepsilon}$ be $\Z$-periodic. There results that:
	\begin{itemize}
		\item[(i)] if $p\in(2,4)$ or $q\in(2,3)$, then $\E_{\G_\varepsilon}(\mu)<0$ and ground states exist for every $\mu>0$;
		\item[(ii)] if $p\in[4,6)$ and $q\in[3,4)$, then there exists $\overline\mu:=\overline\mu(p,q,\alpha,\beta, V, \varepsilon)>0$ such that
		\[
		\E_{\G_\varepsilon}(\mu)\begin{cases}
		=0 & \text{if }\mu\in(0,\overline\mu]\\[.2cm]
		<0 & \text{if }\mu>\overline\mu\,,
		\end{cases}
		\]
		and ground states exist if $\mu>\overline\mu$ and do not exist if $\mu\in(0,\overline\mu)$.
	\end{itemize}
\end{thm}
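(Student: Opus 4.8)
The plan is to organize the proof around the two-scale structure of $\G_\varepsilon$ already central to \cite{ADST19}: at the \emph{microscale} of single edges the grid is one-dimensional, with critical powers $p=6$ and $q=4$, and this scale controls boundedness from below; at the \emph{macroscale} the grid resolves the plane, the critical powers drop to $p=4$ for the bulk term and to $q=3$ for the term concentrated on the line-like $\Z$-periodic set $V$, and this scale fixes the sign of the infimum. Accordingly I would first record two families of Gagliardo--Nirenberg inequalities on $\G_\varepsilon$. The microscale bounds $\|u\|_{L^p(\G_\varepsilon)}^p\le C\mu^{(p+2)/4}\|u'\|_{L^2(\G_\varepsilon)}^{(p-2)/2}$ for the bulk term and $\sum_{\vv\in V}|u(\vv)|^q\le C\mu^{q/4}\|u'\|_{L^2(\G_\varepsilon)}^{q/2}$ for the concentrated one (the latter obtained by summing a single-vertex trace estimate via Cauchy--Schwarz, which is legitimate even for infinite $V$) carry exponents $(p-2)/2<2$ and $q/2<2$ on $\|u'\|_{L^2(\G_\varepsilon)}$ exactly when $p<6$ and $q<4$, so Young's inequality gives $\E_{\G_\varepsilon}(\mu)>-\infty$. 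The macroscale \emph{critical} bounds $\|u\|_{L^4(\G_\varepsilon)}^4\le C_1\mu\,\|u'\|_{L^2(\G_\varepsilon)}^2$ and $\sum_{\vv\in V}|u(\vv)|^3\le C_2\mu^{1/2}\|u'\|_{L^2(\G_\varepsilon)}^2$ (with $C_1,C_2$ depending on $\varepsilon$), the second being the trace inequality along the line of $V$ underlying the threshold $q=3$ of \cite{BDS23}, are the ones that decide the sign.

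For part~(i) I would exhibit, for every $\mu>0$, a competitor of negative energy. If $p<4$, adapting to $\G_\varepsilon$ the spreading test functions of \cite{ADST19} produces a family $u_\lambda\in H^1_\mu(\G_\varepsilon)$ along which the kinetic term scales like $\lambda^2$ and the bulk term like $\lambda^{p-2}$; since $p-2<2$, small $\lambda$ forces $E(u_\lambda,\G_\varepsilon)<0$, regardless of the value of $q$. If instead $q<3$, the same macroscale scaling makes the concentrated term scale like $\lambda^{q-1}$ with $q-1<2$, and the analogous construction of \cite{BDS23} localized near the line of $V$ again yields $E(u_\lambda,\G_\varepsilon)<0$ for $\lambda$ small, regardless of the value of $p$. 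Either hypothesis gives $\E_{\G_\varepsilon}(\mu)<0$ for all $\mu>0$. For part~(ii), where $p\ge4$ and $q\ge3$, both nonlinear terms scale at most like $\lambda^2$, and the critical macroscale inequalities give $\E_{\G_\varepsilon}(\mu)\ge0$ for $\mu$ small; since spreading competitors show this value is approached but not attained, $\E_{\G_\varepsilon}(\mu)=0$ there, while large-mass competitors give $\E_{\G_\varepsilon}(\mu)<0$. Setting $\overline\mu:=\sup\{\mu>0:\E_{\G_\varepsilon}(\mu)=0\}$ and using that $\mu\mapsto\E_{\G_\varepsilon}(\mu)$ is non-increasing (by mass rescaling) yields the stated dichotomy, mirroring Theorem~\ref{thm:GScomp}(ii).

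The heart of the matter is existence whenever $\E_{\G_\varepsilon}(\mu)<0$. Given a minimizing sequence $u_n\in H^1_\mu(\G_\varepsilon)$, bounded in $H^1(\G_\varepsilon)$ by the microscale bounds, the loss of compactness comes solely from the noncompactness of $V$: mass can either slide off to infinity along the periodicity vector $\vec v$ or leak out of the strip containing $V$ in the transverse direction. The first phenomenon is precisely the symmetry to exploit: since $\G_\varepsilon$ and $V$ are invariant under translation by $\vec v$, I would translate each $u_n$ by a multiple of $\vec v$ so as to re-centre its profile in a fixed fundamental cell, and then pass to a weak limit $u$. Vanishing is excluded by $\E_{\G_\varepsilon}(\mu)<0$ through a Lions-type lemma on $\G_\varepsilon$: if $u_n\to0$ locally then both nonlinear terms vanish and $\liminf E(u_n,\G_\varepsilon)\ge0$, a contradiction. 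The delicate point, which I expect to be the main obstacle, is ruling out dichotomy: the problem at infinity in the transverse direction is the bulk $\beta=0$ functional \eqref{eq:b=0} of \cite{ADST19}, so excluding a transverse splitting requires the strict subadditivity $\E_{\G_\varepsilon}(\mu)<\E_{\G_\varepsilon}(m)+\E^{\beta=0}_{\G_\varepsilon}(\mu-m)$ for every $m\in[0,\mu)$, whose borderline case $m=0$ is the strict gain $\E_{\G_\varepsilon}(\mu)<\E^{\beta=0}_{\G_\varepsilon}(\mu)$ brought by the concentrated nonlinearity. Together with the $\vec v$-translation trick this forbids mass from leaving the strip and upgrades weak to strong $H^1(\G_\varepsilon)$-convergence, so that $u$ is a ground state.

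Finally, non-existence for $\mu\in(0,\overline\mu)$ in part~(ii) follows from a short rescaling. If $\E_{\G_\varepsilon}(\mu_0)=0$ were attained at some $u$, then $E(u,\G_\varepsilon)=0$ means $\tfrac12\|u'\|_{L^2(\G_\varepsilon)}^2=\tfrac{\alpha}{p}\|u\|_{L^p(\G_\varepsilon)}^p+\tfrac{\beta}{q}\sum_{\vv\in V}|u(\vv)|^q$, and evaluating $E(\cdot,\G_\varepsilon)$ at $v=\sqrt{t}\,u$ gives, for $t>1$, $E(v,\G_\varepsilon)=\tfrac{\alpha}{p}\|u\|_{L^p(\G_\varepsilon)}^p\,(t-t^{p/2})+\tfrac{\beta}{q}\sum_{\vv\in V}|u(\vv)|^q\,(t-t^{q/2})<0$, since $p/2,q/2>1$ and $u\not\equiv0$. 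Choosing $t\in(1,\overline\mu/\mu_0)$ yields $\|v\|_{L^2(\G_\varepsilon)}^2=t\mu_0\in(0,\overline\mu)$ with $\E_{\G_\varepsilon}(t\mu_0)\le E(v,\G_\varepsilon)<0$, contradicting $\E_{\G_\varepsilon}(t\mu_0)=0$. Hence no ground state exists below the threshold, and the proof is complete once the compactness step above is carried out.
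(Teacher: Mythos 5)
Your overall architecture (Gagliardo--Nirenberg bounds, ``negative level implies existence'', scaling for the threshold and for non-existence below it) matches the paper's, and your part (i), the finiteness of $\overline\mu$, the monotonicity, and the rescaling argument excluding ground states below the threshold are essentially the paper's own arguments. There are, however, two genuine gaps.

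First, in part (ii) the assertion that ``the critical macroscale inequalities give $\E_{\G_\varepsilon}(\mu)\ge 0$ for $\mu$ small'' fails when $q\in(3,4)$. A macroscale bound of the form $\sum_{\vv\in V}|u(\vv)|^q\lesssim \mu^{a}\|u'\|_{2}^{2}$ with $a>0$ can only hold at $q=3$: for $q>3$ it would be violated by the two-dimensional scaling $u_\lambda=\lambda u(\lambda\,\cdot)$ spread over many cells, under which the concentrated term grows like $\lambda^{q-1}\gg\lambda^{2}$. What the available estimates (Lemmas \ref{l.GN-grid.Zper.} and \ref{l.GN-grid.Zper.0}) actually give for $q>3$ is
\begin{equation}
\sum_{\vv\in V}|u(\vv)|^q\lesssim \mu^{1/2}\|u'\|_2^{q-1}+\mu^{1/4}\|u'\|_2^{(2q-1)/2},
\end{equation}
whose exponents exceed $2$, so for fixed small $\mu$ these bounds alone do not exclude negative energy at large $\|u'\|_2$. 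This is precisely why the paper proves $\overline\mu>0$ directly only for $q=3$ (where, after Young's inequality, every term is quadratic in $\|u'\|_2$ with a small coefficient) and then treats $q\in(3,4)$ by a separate contradiction: if $\overline\mu=0$, Lemma \ref{l.ex.crit.} yields ground states $u_\mu$ for all small $\mu$, uniformly bounded in $H^1$, hence with $\|u_\mu\|_\infty\to0$; then $\frac1q\sum_{\vv\in V}|u_\mu(\vv)|^q\le\frac13\sum_{\vv\in V}|u_\mu(\vv)|^3$, so the $q=3$ energy of $u_\mu$ would also be negative, contradicting the already-settled $q=3$ case. Your microscale bound $\sum_{\vv\in V}|u(\vv)|^q\lesssim\mu^{q/4}\|u'\|_2^{q/2}$ could in principle be combined with the macroscale ones over complementary ranges of $\|u'\|_2$ (it controls large $\|u'\|_2$, they control small $\|u'\|_2$, and for small $\mu$ the two ranges overlap), but no such combination appears in the proposal; as written, this step fails.

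Second, the compactness step is left open exactly at what you call the heart of the matter: the strict subadditivity $\E_{\G_\varepsilon}(\mu)<\E_{\G_\varepsilon}(m)+\E^{\beta=0}_{\G_\varepsilon}(\mu-m)$ for $m\in[0,\mu)$ is stated as needed but never proved, and it is not routine in this doubly nonlinear setting. The paper does not need it: in Lemma \ref{l.ex.crit.} the dichotomy alternative (weak limit with mass in $(0,\mu)$) is excluded by Brezis--Lieb splitting combined with the strict mass-superlinearity $E\bigl(\sqrt{\mu/m}\,v,\G\bigr)<\frac{\mu}{m}E(v,\G)$, valid for any $v\not\equiv0$ since $p,q>2$; this yields $\E_\G(\mu)>\E_\G(\mu)$, with no problem at infinity involved. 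The $\beta=0$ functional \eqref{eq:b=0} enters only to exclude a zero weak limit: after translating along $\vec v$ so that $\|u_n\|_{L^\infty(V)}=\|u_n\|_{L^\infty(V_0)}$, the paper shows that \emph{only} the concentrated term vanishes (your claim that both nonlinear terms vanish is unjustified if ``vanishing'' merely means local convergence to zero after $\vec v$-recentering, since the bulk $L^p$ norm can escape to infinity; genuine Lions vanishing, uniform over all $\Z^2$-translations, is not what failure of $\vec v$-recentering gives you), and then compares with $\inf\bigl(\frac12\|v'\|_2^2-\frac1p\|v\|_p^p\bigr)$, reaching a contradiction through the attainability result of \cite{ADST19} and the strict gain coming from positivity of its minimizers. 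So the scheme you sketch can likely be completed, but the two strict inequalities it hinges on are exactly the pieces missing from the proposal, and the paper's route shows they can be bypassed by a lighter argument.
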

\begin{thm}
	\label{thm:GSZ2}
	Let $p\in(2,6)$, $q\in(2,4)$, $\alpha,\beta>0$ and $V\subset\V_{\G_\varepsilon}$ be $\Z^2$-periodic. Then $\E_{\G_\varepsilon}(\mu)<0$ and ground states exist for every $\mu>0$.
\end{thm}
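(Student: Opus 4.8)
The plan is to run the classical two-step scheme for constrained minimization — first prove $\E_{\G_\varepsilon}(\mu)<0$ for every $\mu>0$, then recover compactness of minimizing sequences — while exploiting throughout that, for a $\Z^2$-periodic $V$, the concentrated nonlinearity is felt with positive density across the whole grid. This last feature is precisely what removes the mass thresholds present in Theorems \ref{thm:GScomp} and \ref{thm:GSZ}.

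To obtain negativity of the energy level I would use a spreading family of competitors. Fix $L$ large and set $u_L:=h\,\chi_L$, where $\chi_L$ equals $1$ on the vertices and edges contained in $[-L,L]^2$, vanishes outside $[-2L,2L]^2$, and decays affinely in between so that $|\chi_L'|\lesssim 1/L$, the height $h>0$ being fixed by $\|u_L\|_{L^2(\G_\varepsilon)}^2=\mu$. Counting edges and vertices gives $\|\chi_L\|_{L^2(\G_\varepsilon)}^2\sim L^2/\varepsilon$, hence $h^2\sim\mu\varepsilon/L^2$, and then one checks $\|u_L'\|_{L^2(\G_\varepsilon)}^2\sim\mu L^{-2}$ and $\|u_L\|_{L^p(\G_\varepsilon)}^p\sim\mu^{p/2}\varepsilon^{p/2-1}L^{2-p}$, while, since a $\Z^2$-periodic $V$ has positive two-dimensional density, $\#(V\cap[-L,L]^2)\gtrsim L^2/\varepsilon^2$ forces $\sum_{\vv\in V}|u_L(\vv)|^q\gtrsim\mu^{q/2}\varepsilon^{q/2-2}L^{2-q}$. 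As $q<4$ one has $2-q>-2$, so the negative concentrated term decays strictly slower than the positive kinetic term $\sim L^{-2}$; hence $E(u_L,\G_\varepsilon)<0$ for $L$ large, and $\E_{\G_\varepsilon}(\mu)<0$. This is the exact mechanism that fails when $V$ is finite or lies in a strip, where the sparser distribution of $V$ cannot beat the kinetic cost.

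For existence I would run a concentration-compactness argument adapted to the lattice symmetry. After noting that minimizing sequences are bounded in $H^1(\G_\varepsilon)$ — via the subcritical Gagliardo--Nirenberg bounds $\|u\|_{L^p(\G_\varepsilon)}^p\lesssim\|u'\|_{L^2(\G_\varepsilon)}^{(p-2)/2}$ and $\sum_{\vv\in V}|u(\vv)|^q\lesssim\|u'\|_{L^2(\G_\varepsilon)}^{q/2}$ at fixed mass, with both exponents below $2$ since $p<6$, $q<4$ — the first task is to rule out vanishing: a Lions-type lemma on the grid shows that if $\sup_y\int_{B(y,1)}|u_n|^2\to0$ then both $\|u_n\|_{L^p(\G_\varepsilon)}^p\to0$ and $\sum_{\vv\in V}|u_n(\vv)|^q\to0$, giving $\liminf E(u_n,\G_\varepsilon)\ge0$, which contradicts $\E_{\G_\varepsilon}(\mu)<0$. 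Hence, translating each $u_n$ by a suitable element of the $\Z^2$ periodicity lattice of $(\G_\varepsilon,V)$ — an operation preserving mass and energy — produces a nontrivial weak limit $u\neq0$ with $\|u\|_{L^2(\G_\varepsilon)}^2=:m\in(0,\mu]$. The heart of the matter is then to exclude dichotomy, i.e.\ to show $m=\mu$.

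The key analytic input is the strict subadditivity $\E_{\G_\varepsilon}(\mu)<\E_{\G_\varepsilon}(\mu_1)+\E_{\G_\varepsilon}(\mu_2)$ for $\mu_1+\mu_2=\mu$, $\mu_1,\mu_2>0$, which I would derive from the scaling inequality $\E_{\G_\varepsilon}(\theta\mu)<\theta\,\E_{\G_\varepsilon}(\mu)$ for $\theta>1$: testing with $\sqrt{\theta}\,u$ for a near-optimal $u$ and using $\theta^{p/2},\theta^{q/2}>\theta$ together with $E(u,\G_\varepsilon)<0$ (so the nonlinear terms are bounded below by $|\E_{\G_\varepsilon}(\mu)|$) yields a gap uniform in the near-minimizer. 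Here $\Z^2$-periodicity is decisive: the escaping mass still sees the same grid and the same concentrated nonlinearity, so the relevant comparison is with $\E_{\G_\varepsilon}$ itself rather than with a $\beta=0$ problem at infinity. Inserting a Brezis--Lieb splitting of all three terms (including a discrete Brezis--Lieb identity for the vertex term $\sum_{\vv\in V}|\,\cdot\,(\vv)|^q$) into $E(u_n,\G_\varepsilon)=E(u,\G_\varepsilon)+E(u_n-u,\G_\varepsilon)+o(1)$ gives $\E_{\G_\varepsilon}(\mu)\ge\E_{\G_\varepsilon}(m)+\E_{\G_\varepsilon}(\mu-m)$, which with strict subadditivity and the already excluded case $m=0$ forces $m=\mu$. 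Then $u_n\to u$ in $L^2(\G_\varepsilon)$, interpolation upgrades this to convergence of the nonlinear terms, weak lower semicontinuity of the kinetic part gives $E(u,\G_\varepsilon)\le\E_{\G_\varepsilon}(\mu)$, and $u\in H^1_\mu(\G_\varepsilon)$ closes the argument. I expect the two main obstacles to be exactly the lemmas tied to the vertex nonlinearity: the vanishing lemma controlling the discrete sum $\sum_{\vv\in V}|u_n(\vv)|^q$, and the discrete Brezis--Lieb identity for the concentrated term, both of which hinge on converting $H^1$ bounds into uniform control of the vertex values along the periodic set $V$.
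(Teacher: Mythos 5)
Your proposal is correct and follows essentially the same route as the paper: first negativity of $\E_{\G_\varepsilon}(\mu)$ for every mass, then the standard concentration-compactness upgrade from negativity to existence (boundedness via the subcritical Gagliardo--Nirenberg bounds, vanishing excluded by translations along the periodicity lattice of $V$, dichotomy excluded by Brezis--Lieb splitting combined with the strict scaling inequality), which is precisely the content of Lemma \ref{l.ex.crit.}. The only difference is in the negativity step, where the paper simply cites \cite[Theorem 1.8]{BDS23}, whereas you prove it directly with an explicit spreading competitor whose vertex count $\#\left(V\cap[-L,L]^2\right)\gtrsim L^2/\varepsilon^2$ correctly exploits the $\Z^2$-periodicity and yields the decisive comparison $L^{2-q}\gg L^{-2}$ for $q<4$; this is a valid, self-contained substitute for that citation.
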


Theorems \ref{thm:GScomp}-\ref{thm:GSZ}-\ref{thm:GSZ2} highlight a general feature of the doubly nonlinear ground state problem on grids: either ground states exist for every mass, or a threshold phenomenon occurs and ground states with small masses do not exist. Moreover, except possibly at the thresholds, ground states exist if and only if the ground state level is strictly negative. Observe also that existence of ground states is more likely when the set of nonlinear vertices is somehow ``more periodic'', as the region in the $pq$-plane where ground states exist becomes larger and larger as passing from finite to $\Z$-periodic to $\Z^2$-periodic sets $V$. 

These are in fact the typical traits of NLS ground state problems on periodic graphs. Indeed, comparing the results above with those  in \cite{ADST19, BDS23} shows that the qualitative picture is the same as that for models with a single nonlinearity. However, the doubly nonlinear model is energetically convenient, since it is easy to see that, for every given choice of the parameters, the set of masses for which doubly nonlinear ground states exist contains that for which any of the models with a single nonlinearity admits ground states. 

\smallskip
Once the portrait for existence is clear, we can turn our attention to the singular limit of ground states on grids $\G_\varepsilon$ with $\varepsilon\to0$. Since this leads us to investigate the relation between problems on grids and in the plane, we first need to specify how to compare functions on $\G_\varepsilon$ with those on $\R^2$. To this end, we consider the following extension procedure. For every $\varepsilon>0$, we write
\[
\R^2=\bigcup_{(i,j)\in\Z^2}U_{ij}^\varepsilon\cup D_{ij}^\varepsilon\,,
\]
with
\begin{equation}
	\label{eq:UD}
	\begin{split}
		U_{ij}^\varepsilon&\,:=\left\{(x,y)\in\R^2\,:\,\varepsilon i\leq x\leq\varepsilon(i+1),\,x-\varepsilon i\leq y-\varepsilon j\leq\varepsilon\right\}\\
		D_{ij}^\varepsilon&\,:=\left\{(x,y)\in\R^2\,:\,\varepsilon i\leq x\leq\varepsilon(i+1),\,x-\varepsilon i\geq y-\varepsilon j\geq0\right\}
	\end{split}
\end{equation}
being the up-diagonal and down-diagonal triangles contained in the cell of $\G_\varepsilon$ with vertices $(\varepsilon i,\varepsilon j)$, $(\varepsilon(i+1),\varepsilon j)$, $(\varepsilon(i+1),\varepsilon(j+1))$, $(\varepsilon i, \varepsilon(j+1))$. By construction, for almost every $(x,y)\in\R^2$ there exists a unique couple $(i,j)\in\R^2$ such that $(x,y)$ belongs either to $U_{ij}^\varepsilon$ or to $D_{ij}^\varepsilon$. Hence, given $u:\G_\varepsilon\to\R$, we define its extension $\mathcal{A}u:\R^2\to\R$ inside each $U_{ij}^\varepsilon$, $D_{ij}^\varepsilon$ as the affine interpolation of the values of $u$ at the vertices of $U_{ij}^\varepsilon$, $D_{ij}^\varepsilon$, respectively. By definition, $\mathcal{A}u$ is piecewise affine and continuous on $\R^2$, and it coincides with $u$ at the vertices of $\G_\varepsilon$ (but not necessarily in the interior of its edges).

We can now state our main results on the singular limit of doubly nonlinear ground states. Note that, since such a limit requires to consider sequences of grids $\G_\varepsilon$ with varying $\varepsilon$, one must also specify how the set of nonlinear vertices $V_\varepsilon\subset\V_{\G_\varepsilon}$ changes with $\varepsilon$. In this sense, it is rather natural to consider sequences of $\Z$-periodic or $\Z^2$-periodic sets in $\G_\varepsilon$, as the periodicity is preserved as $\varepsilon\to0$. According to Definitions \ref{def:Z-per}--\ref{def:Z2-per}, this means that in the following we will always take $V_\varepsilon=\varepsilon V$, for a suitable $V\subseteq\V_{\G_1}$ that will possibly change case by case.

As it is reasonable to expect, the specific dislocation of the vertices of $V$ will affect the numerology in the next theorems. In particular, the choice of $\beta$ will be based on the following general fact: given a $\Z$-periodic or $\Z^2$-periodic set $V\subset\V_{\G_1}$, it is possible to identify a periodicity cell $V_0$ for $V$, i.e. a compact subset of $V$ such that the whole $V$ is given by the union of all translations of $V_0$ along the vector $\vec{v}$ of Definition \ref{def:Z-per}, if $V$ is $\Z$-periodic, or along the vectors $\vec{v}_1,\vec{v}_2$ of Definition \ref{def:Z2-per}, if $V$ is $\Z^2$-periodic. Moreover, to such $V_0$ one can naturally associate another compact set $Q_0\subset\G_1$, whose translations along the same vectors cover either the strip $\left\{P\in\G_1\,:\,|(P-P_0)\cdot \vec{v}^\perp|\leq r\right\}$ of Definition \ref{def:Z-per} when $V$ is $\Z$-periodic, or the whole $\G_1$ when $V$ is $\Z^2$-periodic. Even though the existence of such $V_0$ and $Q_0$ is heuristically evident by the very definitions of periodic subsets of $\V_{\G_1}$, for the sake of clarity we present the details of their construction in Remark \ref{rem:Per_dec} below.

Let us start considering the singular limit of doubly nonlinear ground states on grids with $\Z^2$-periodic nonlinear vertices. In this case, we prove that the limit problem in the plane is given by the energy functional
\begin{equation}
	\label{eq:ER2}
	E(u,\R^2):=\frac12\|\nabla u\|_{L^2(\R^2)}^2-\frac1p\|u\|_{L^p(\R^2)}^p-\frac1q\|u\|_{L^q(\R^2)}^q
\end{equation}
with two standard nonlinearities spread on the whole $\R^2$. As usual, let $\E_{\R^2}(\mu)$ be the corresponding ground state energy level in $H_\mu^1(\R^2)$, and recall that (by e.g. \cite{S20}) $\E_{\R^2}(\mu)$ is attained for every $\mu>0$ if and only if $2<p,q<4$, and that the associated ground states are solutions of 
\begin{equation}
	\label{NLS_Standard}
-\Delta u+\lambda u=|u|^{p-2}u+|u|^{q-2}u\qquad\text{in}\quad\R^2,
\end{equation}
where $\lambda\in\R$ denotes also in this context the Lagrange multiplier associated to the mass constraint. In this regime of nonlinearities, we have the following convergence result.
\begin{thm}
	\label{thm:limZ2}
	Let $p\in(2,4)$, $q\in(2,4)$, and $V\subseteq\V_{\G_1}$ be a given $\Z^2$-periodic set in $\G_1$. For every $\varepsilon>0$, let $V_\varepsilon:=\varepsilon V\subseteq\V_{\G_\varepsilon}$ be the $\Z^2$-periodic set in $\G_\varepsilon$ associated to $V$ in $\G_1$, and  
	\[
	\alpha = \frac12\,,\qquad\beta = \frac{\#\left(\V_{\G_1}\cap Q_0\right)}{\# V_0}\varepsilon\,,
	\]
	where $Q_0, V_0$ are the sets associated to $V$ as in Remark \ref{rem:Per_dec} below.
	Then, for every $\mu>0$, 
	\[
	\lim_{\varepsilon\to0}\varepsilon\E_{\G_\varepsilon}\left(\frac{2\mu}{\varepsilon}\right)=\E_{\R^2}(\mu)\,.
	\]
	Furthermore, for every positive ground state $u_\varepsilon$ of $E(\cdot,\G_\varepsilon)$ in $H_{\frac{2\mu}{\varepsilon}}^1(\G_\varepsilon)$ there exists $x_\varepsilon\in \R^2$ such that, up to subsequences,
	\[
	\mathcal{A}u_\varepsilon(\cdot-x_\varepsilon)\to\phi_\mu\quad\text{in }H^1(\R^2)\quad\text{as }\varepsilon\to0\,,
	\]
	where $\phi_\mu$ is a positive radially symmetric non-increasing ground state of $E(\cdot,\R^2)$ in $H_\mu^1(\R^2)$ and $\mathcal{A}$ is the extension operator introduced by \cite[Section 2]{D24}. 
\end{thm}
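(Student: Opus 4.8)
The plan is to prove the convergence of ground state levels via a two-sided bound (a $\Gamma$-convergence-type argument split into a $\limsup$ upper bound and a $\liminf$ lower bound), and then bootstrap the level convergence together with compactness to obtain the strong $H^1(\R^2)$ convergence of the extended ground states. Throughout, the scaling $\mu\mapsto 2\mu/\varepsilon$ and the choice $\alpha=1/2$, $\beta$ proportional to $\varepsilon$ are dictated by the requirement that, under the extension $\mathcal A$, the rescaled grid energy $\varepsilon E(u_\varepsilon,\G_\varepsilon)$ should match $E(\mathcal Au_\varepsilon,\R^2)$ asymptotically. The key heuristic is that each edge of $\G_\varepsilon$ has length $\varepsilon$, there are of order $\varepsilon^{-2}$ cells per unit area, and a triangle $U_{ij}^\varepsilon$ or $D_{ij}^\varepsilon$ has area $\varepsilon^2/2$; one must check that the $L^2$, $L^p$ and $\|\nabla\cdot\|_{L^2}^2$ functionals of the piecewise-affine extension reproduce the grid quantities up to the factor $\varepsilon$, and that the concentrated term $\beta\sum_{\vv\in V_\varepsilon}|u(\vv)|^q$ converges, after rescaling, to the spread term $\frac1q\|u\|_{L^q(\R^2)}^q$ because the $\Z^2$-periodic vertices in $V_\varepsilon$ become equidistributed in the plane as $\varepsilon\to0$, with density exactly fixed by the ratio $\#(\V_{\G_1}\cap Q_0)/\#V_0$.

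For the upper bound $\limsup_{\varepsilon\to0}\varepsilon\E_{\G_\varepsilon}(2\mu/\varepsilon)\le\E_{\R^2}(\mu)$, I would fix the known radial non-increasing minimizer $\phi_\mu$ of $E(\cdot,\R^2)$ in $H^1_\mu(\R^2)$, restrict it to the vertices of $\G_\varepsilon$ (after the appropriate scaling), and use it as a competitor on the grid. The main task here is to estimate the discretization error: the piecewise-affine interpolant of a smooth (or merely $H^1$) profile reproduces each of the three energy terms up to errors vanishing as $\varepsilon\to0$, and the mass of the sampled profile can be corrected to hit exactly $2\mu/\varepsilon$ by a negligible rescaling. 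Since $\alpha=1/2$ and the edges overlap when one sums the Dirichlet energy over the two diagonal triangles of each cell, one must carefully bookkeep how the gradient of $\mathcal A$ on the triangles relates to the discrete derivative along single edges; this is exactly where the factor $\alpha=1/2$ appears. For the $L^q$-term the key computation is that $\beta\sum_{\vv\in V_\varepsilon}|\phi_\mu(\vv)|^q\sim\frac1q\|\phi_\mu\|_{L^q(\R^2)}^q$ by a Riemann-sum argument exploiting the prescribed density of $V_\varepsilon$.

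For the lower bound, I would take, for each $\varepsilon$, a ground state $u_\varepsilon$ of $E(\cdot,\G_\varepsilon)$ in $H^1_{2\mu/\varepsilon}(\G_\varepsilon)$ (which exists by Theorem \ref{thm:GSZ2}), form its extension $v_\varepsilon:=\mathcal Au_\varepsilon$, and show that the rescaled grid energy bounds the planar energy of $v_\varepsilon$ from below up to vanishing errors, while $\|v_\varepsilon\|_{L^2(\R^2)}^2\to\mu$. The crucial point is to transfer the concentrated sum on $V_\varepsilon$ into the continuous $L^q(\R^2)$ integral of $v_\varepsilon$ in the correct direction; this requires a trace-type or interpolation estimate controlling the nodal values $|u_\varepsilon(\vv)|^q$ in terms of the $L^q$-mass of $v_\varepsilon$ on the surrounding triangles, with the density normalization again producing the factor $1/q$. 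Having both bounds forces $\varepsilon\E_{\G_\varepsilon}(2\mu/\varepsilon)\to\E_{\R^2}(\mu)$, and it simultaneously shows that $(v_\varepsilon)$ is a minimizing sequence for $E(\cdot,\R^2)$ at mass $\mu$, hence bounded in $H^1(\R^2)$. At this stage I would invoke the standard concentration-compactness machinery for the subcritical planar problem \eqref{eq:ER2}: since vanishing and dichotomy are excluded by the strict negativity $\E_{\R^2}(\mu)<0$ and the strict subadditivity of $\mu\mapsto\E_{\R^2}(\mu)$ in the $L^2$-subcritical regime $2<p,q<4$, after suitable translations $x_\varepsilon$ the sequence $v_\varepsilon(\cdot-x_\varepsilon)$ converges strongly in $L^2$, and then strongly in $H^1$, to a minimizer, which can be taken radial and non-increasing by rearrangement.

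The main obstacle I expect is the \emph{lower} bound for the concentrated nonlinearity, i.e. controlling $\beta\sum_{\vv\in V_\varepsilon}|u_\varepsilon(\vv)|^q$ by $\frac1q\|v_\varepsilon\|_{L^q(\R^2)}^q$ uniformly in $\varepsilon$ for arbitrary (not a priori smooth) ground states. Unlike the upper bound, where I get to choose a fixed smooth competitor, here I must handle genuine minimizers whose grid profiles could in principle oscillate on the scale of single edges, so a naive Riemann-sum comparison does not immediately apply; one needs a quantitative estimate relating a vertex value to the $L^q$-average of the affine extension on the adjacent triangles (or an edge-to-vertex trace inequality on the scale-$\varepsilon$ grid), together with uniform $H^1$ bounds to absorb the error terms. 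A secondary delicate point is ensuring that the translations $x_\varepsilon$ can be chosen so that the extended profiles recentre consistently despite $V_\varepsilon$ being only $\Z^2$-periodic rather than fully translation-invariant; since the density of $V_\varepsilon$ is homogeneous in the limit, the translation invariance of the limit functional \eqref{eq:ER2} should make this harmless, but it must be verified that the concentrated term's contribution does not depend on the translated position in the limit.
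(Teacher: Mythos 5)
Your strategy is the same as the paper's: the upper bound (paper's Lemma \ref{lem:uplev}) uses the restriction of the planar ground state $\phi_\mu$ to $\G_\varepsilon$ as a competitor; the lower bound extends grid ground states via $\mathcal{A}$ and transfers each energy term; compactness then produces the limit minimizer. Moreover, the ``main obstacle'' you single out — a quantitative vertex-to-integral estimate valid for arbitrary $H^1$ functions, proved by an edge-level trace/FTC argument — is exactly the paper's Lemma \ref{l.GN-grid-i}, so that part of your plan is sound. There is, however, one genuine logical gap: your only stated source of uniform bounds is ``$(v_\varepsilon)$ is a minimizing sequence for $E(\cdot,\R^2)$, hence bounded in $H^1(\R^2)$'', while you simultaneously (and correctly) observe that the grid-to-plane transfer estimates need ``uniform $H^1$ bounds to absorb the error terms''. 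This is circular: the minimizing-sequence property is precisely what the transfer estimates are supposed to establish, and those estimates do not close without an a priori bound on the \emph{grid} ground states themselves. Concretely, setting $t_\varepsilon:=\varepsilon\|u_\varepsilon'\|_{2,\varepsilon}^2$, the error in transferring the concentrated term is of size $\varepsilon^2\|u_\varepsilon\|_{2(q-1),\varepsilon}^{q-1}\|u_\varepsilon'\|_{2,\varepsilon}\lesssim \varepsilon\, t_\varepsilon^{(q-1)/2}$ (and similarly for the Lebesgue terms), which is \emph{not} $o(1)$ unless one already knows $t_\varepsilon\lesssim1$; nothing in your proposal excludes $t_\varepsilon\to\infty$ as $\varepsilon\to0$.

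The fix is the paper's Lemma \ref{lem:aprZ2}, and it must come \emph{before} any comparison with the planar problem: since $\E_{\G_\varepsilon}(2\mu/\varepsilon)<0$ by Theorem \ref{thm:GSZ2}, every ground state satisfies
\[
\frac{\varepsilon}{2}\|u_\varepsilon'\|_{2,\varepsilon}^2<\frac{\varepsilon}{2p}\|u_\varepsilon\|_{p,\varepsilon}^p+\varepsilon^2\frac{\#\left(\V_{\G_1}\cap Q_0\right)}{q\#V_0}\sum_{\vv\in V_\varepsilon}|u_\varepsilon(\vv)|^q\,,
\]
and the grid Gagliardo--Nirenberg inequalities (\eqref{GN-eps.1}, \eqref{GN-eps.2} together with Lemma \ref{l.GN-grid-i}) bound the right-hand side by $C\bigl(t_\varepsilon^{(p-2)/2}+t_\varepsilon^{(q-2)/2}+\varepsilon^{(4-q)/2}t_\varepsilon^{q/4}\bigr)$; since all exponents are strictly below $1$ when $p,q\in(2,4)$, this forces $t_\varepsilon\lesssim1$, after which your transfer estimates and the rest of the argument go through. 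Two minor remarks. First, your compactness step via Lions' concentration-compactness (vanishing excluded by $\E_{\R^2}(\mu)<0$, dichotomy by strict subadditivity) is a legitimate alternative to the paper's more self-contained argument (translations maximizing the local $L^2$ mass on unit squares, Brezis--Lieb, and an $L^4$ estimate excluding vanishing); both work in the regime $2<p,q<4$. Second, the limit function cannot be ``taken radial and non-increasing by rearrangement'' a posteriori — it is whatever the $H^1$ limit is; what rescues the statement, as in the paper, is that \emph{every} positive ground state of $E(\cdot,\R^2)$ is radially symmetric non-increasing about the point where it attains its maximum, so a further translation yields the claimed $\phi_\mu$.
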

Observe that in the regime $2<p,q<4$ both $\E_{\R^2}(\mu)$ and $\E_{\G_\varepsilon}(\mu)$ are attained for every $\mu>0$, but this is not the only choice of the nonlinearity powers for which this is true on grids with $\Z^2$-periodic nonlinear vertices. Indeed, by Theorem \ref{thm:GSZ2} doubly nonlinear ground states on $\G_\varepsilon$ exist even when $p\in[4,6)$. Actually, adapting part of the proof of Theorem \ref{thm:limZ2} one can easily observe that the convergence of $\varepsilon\E_{\G_\varepsilon}(2\mu/\varepsilon)$ to $\E_{\R^2}(\mu)$ holds for every $p,q>2$ and $\mu>0$. Since $\E_{\R^2}(\mu)=-\infty$ whenever (the largest between) $p$ (and $q$) is greater than 4, this implies that the energy of doubly nonlinear ground states of $E(\cdot,\G_\varepsilon)$ in $H_{2\mu/\varepsilon}^1(\G_\varepsilon)$ diverges to $-\infty$ as $\varepsilon\to0$ in this case.

Let us now turn our attention to sequences of grids $\G_\varepsilon$ with $\Z$-periodic nonlinear vertices. Basing on Theorem \ref{thm:limZ2}, one expects again that in the limit for $\varepsilon\to0$ the nonlinearity concentrated on the set $V_\varepsilon$ converge to the $L^q$ norm on the subset of $\R^2$ somehow covered by ``the limit of $V_\varepsilon$''. According to this heuristics, there are two nontrivial possibilities for this limit subset, as it can be either a single line or a full strip in $\R^2$, in both cases parallel to the vector $\vec{v}$ of Definition \ref{def:Z-per}. Since by definition $V_\varepsilon=\varepsilon V$ with $V$ in $\G_1$, from the technical point of view one should recover the first limit problem when $V$ is a fixed set, whereas the second one should arise when $V$ is itself depending on $\varepsilon$ and its width in the direction orthogonal to $\vec{v}$ grows as $1/\varepsilon$ for $\varepsilon\to0$.

This is indeed the case. Let us introduce the energy functionals
\begin{equation}
	\label{eq:ER2_st}
	E_\theta(u,\R^2):=\frac12\|\nabla u\|_{L^2(\R^2)}^2-\frac1p\|u\|_{L^p(\R^2)}^p-\frac1q\|\tau_\theta u\|_{L^q(s_\theta)}^q
\end{equation}
\begin{equation}
	\label{eq:ER2_StR}
	E_{\theta,R}(u,\R^2):=\frac12\|\nabla u\|_{L^2(\R^2)}^2-\frac1p\|u\|_{L^p(\R^2)}^p-\frac1q\|u\|_{L^q(S_{\theta,R})}^q
\end{equation}
where, for every $\theta\in\left(-\frac\pi2,\frac\pi2\right]$ and $R>0$, we set $\vec{v}_\theta:=(\cos\theta, \sin\theta)$,
\begin{equation}
\label{eq:sSt}
s_\theta:=\left\{t\vec{v}_\theta\,:\,t\in\R\right\}\,,\qquad S_{\theta,R}:=\left\{P\in\R^2\,:\,\inf_{P_0\in s_\theta}|P-P_0|\leq R\right\}\,,
\end{equation}
and we let $\tau_\theta:H^1(\R^2)\to H^{1/2}(\R)$ be the trace operator on $s_\theta$, and denote by $\E_{\R^2,\theta}(\mu)$, $\E_{\R^2,\theta,R}(\mu)$ the corresponding ground state energy levels on $H_\mu^1(\R^2)$.

\begin{thm}
	\label{thm:limZ_theta}
	Let $p\in(2,4)$, $q\in(2,3)$,  and $V\subset\V_{\G_1}$ be a given $\Z$-periodic set in $\G_1$. For every $\varepsilon>0$, let $V_\varepsilon:=\varepsilon V\subseteq\V_{\G_\varepsilon}$ be the $\Z$-periodic set in $\G_\varepsilon$ associated to $V$ in $\G_1$, and 
	\[
	\alpha = \frac12\,,\qquad\beta = \frac{|\vec{v}|}{\#V_0}\,,
	\]
	where $\vec{v}:=(v_1,v_2)$ is the vector associated to $V$ as in Definition \ref{def:Z-per} and $V_0$ is the set associated to $V$ as in Remark \ref{rem:Per_dec} below. Let also
	\[
	\theta:=\begin{cases}
	\text{\normalfont arctan}\frac{v_2}{v_1} & \text{if }v_1\neq0\\
	\frac\pi2 & \text{if }v_1=0\,.
	\end{cases}
	\]
	Then, for every $\mu>0$, 
	\[
	\lim_{\varepsilon\to0}\varepsilon\E_{\G_\varepsilon}\left(\frac{2\mu}{\varepsilon}\right)=\E_{\R^2,\theta}(\mu)\,.
	\]
	Furthermore, for every positive ground state $u_\varepsilon$ of $E(\cdot,\G_\varepsilon)$ in $H_{\frac{2\mu}{\varepsilon}}^1(\G_\varepsilon)$ there exists $x_\varepsilon\in \R^2$ such that, up to subsequences,
	\[
	\mathcal{A}u_\varepsilon(\cdot-x_\varepsilon)\to\psi_\mu\quad\text{in }H^1(\R^2)\quad\text{as }\varepsilon\to0\,,
	\]
	where $\psi_\mu$ is a positive ground state of $E_\theta(\cdot,\R^2)$ in $H_\mu^1(\R^2)$. 
\end{thm}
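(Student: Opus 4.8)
The plan is to follow the same two-step scheme as for Theorem~\ref{thm:limZ2}, relying on the quantitative estimates for the extension operator $\mathcal{A}$ established in \cite[Section~2]{D24}, and to isolate the single genuinely new point: when $V$ is a fixed $\Z$-periodic set, the concentrated nonlinearity densifies along the line $s_\theta$ rather than on the whole plane, producing the trace term $\frac1q\|\tau_\theta u\|_{L^q(s_\theta)}^q$ in place of the bulk term $\frac1q\|u\|_{L^q(\R^2)}^q$. Throughout I would use that, for $p\in(2,4)$ and $q\in(2,3)$, the level $\E_{\R^2,\theta}(\mu)$ is finite and strictly negative for every $\mu>0$: finiteness follows from the two–dimensional Gagliardo--Nirenberg inequality $\|u\|_{L^p(\R^2)}^p\le C\|\nabla u\|_{L^2}^{p-2}\|u\|_{L^2}^2$ (with $p-2<2$) together with the trace inequality $\|\tau_\theta u\|_{L^q(s_\theta)}^q\le C\|\nabla u\|_{L^2}^{q-1}\|u\|_{L^2}$, whose exponent $q-1<2$ encodes the $L^2$–subcriticality of the line term (the borderline being exactly $q=3$); strict negativity is obtained by evaluating $E_\theta$ on a fixed profile rescaled by the mass–preserving dilation $u\mapsto\lambda u(\lambda\,\cdot)$ and letting $\lambda\to0^+$, since $\min(p-2,q-1)<2$ makes a nonlinear term dominate the Dirichlet energy.

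\textbf{Energy upper bound.} For $\limsup_{\varepsilon\to0}\varepsilon\E_{\G_\varepsilon}(2\mu/\varepsilon)\le\E_{\R^2,\theta}(\mu)$ I would fix a smooth, compactly supported near–minimizer $\psi$ of $E_\theta$ at mass $\mu$ and sample it on $\G_\varepsilon$, letting $u_\varepsilon$ be $c_\varepsilon$ times the grid function taking the values of $\psi$ at the vertices, with $c_\varepsilon\to1$ chosen so that $\|u_\varepsilon\|_{L^2(\G_\varepsilon)}^2=2\mu/\varepsilon$ exactly. The estimates of \cite{D24} give $\varepsilon\big(\tfrac12\|u_\varepsilon'\|_{L^2(\G_\varepsilon)}^2-\tfrac{\alpha}{p}\|u_\varepsilon\|_{L^p(\G_\varepsilon)}^p\big)\to\tfrac12\|\nabla\psi\|_{L^2(\R^2)}^2-\tfrac1p\|\psi\|_{L^p(\R^2)}^p$ with $\alpha=1/2$, while the concentrated term is handled by Riemann sums: the vertices of $V_\varepsilon=\varepsilon V$ lie along $s_\theta$ with asymptotic linear density $\#V_0/(\varepsilon|\vec v|)$ (with $V_0$ as in Remark~\ref{rem:Per_dec}), so with the prescribed weight $\beta=|\vec v|/\#V_0$ the uniform continuity of $\psi$ yields $\varepsilon\,\tfrac{\beta}{q}\sum_{\vv\in V_\varepsilon}|u_\varepsilon(\vv)|^q\to\tfrac1q\|\tau_\theta\psi\|_{L^q(s_\theta)}^q$. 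Hence $\limsup\varepsilon\E_{\G_\varepsilon}(2\mu/\varepsilon)\le E_\theta(\psi,\R^2)$, and taking $\psi$ along a minimizing sequence gives the bound.

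\textbf{Lower bound and compactness.} For the reverse inequality and the convergence of ground states I would take positive ground states $u_\varepsilon$ at mass $2\mu/\varepsilon$ and set $v_\varepsilon:=\mathcal{A}u_\varepsilon$. By \cite{D24} the masses satisfy $\|v_\varepsilon\|_{L^2(\R^2)}^2\to\mu$ and $\varepsilon\big(\tfrac12\|u_\varepsilon'\|^2-\tfrac{\alpha}{p}\|u_\varepsilon\|_p^p\big)\ge\tfrac12\|\nabla v_\varepsilon\|_{L^2}^2-\tfrac1p\|v_\varepsilon\|_{L^p}^p-o(1)$; comparing the discrete sum over $V_\varepsilon$ with the trace of the piecewise–affine $v_\varepsilon$ along $s_\theta$ gives $\varepsilon\tfrac\beta q\sum_{V_\varepsilon}|u_\varepsilon(\vv)|^q=\tfrac1q\|\tau_\theta v_\varepsilon\|_{L^q(s_\theta)}^q+o(1)$, so that $\varepsilon E(u_\varepsilon,\G_\varepsilon)\ge E_\theta(v_\varepsilon,\R^2)-o(1)$ and, combining with the upper bound, $(v_\varepsilon)$ is bounded in $H^1(\R^2)$ with $E_\theta(v_\varepsilon,\R^2)\to\E_{\R^2,\theta}(\mu)<0$. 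I would then run a concentration–compactness analysis on $(v_\varepsilon)$: vanishing is excluded because both nonlinear terms would then tend to $0$, contradicting $\E_{\R^2,\theta}(\mu)<0$, and dichotomy is excluded by the strict subadditivity $\E_{\R^2,\theta}(\mu)<\E_{\R^2,\theta}(m)+\E_{\R^2,\theta}(\mu-m)$ for $0<m<\mu$, itself a consequence of $L^2$–subcriticality. \textbf{I expect the main obstacle to lie precisely here}: unlike in Theorem~\ref{thm:limZ2}, the limit functional $E_\theta$ is invariant only under translations along $s_\theta$, and the binding nonlinearity is supported on the one–dimensional set $s_\theta$; one must therefore take the translations $x_\varepsilon$ in the direction $\vec v_\theta$ and, crucially, rule out that a fraction of the mass escapes to infinity transversally to $s_\theta$ — a splitting the fully radial framework of the $\Z^2$ case did not permit. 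This is prevented by noting that any mass detached from the line carries none of the favourable trace term, so its presence would strictly raise the energy above $\E_{\R^2,\theta}(\mu)$, again contradicting minimality.

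\textbf{Strong convergence and identification.} Once tightness is secured, up to translating by a suitable $x_\varepsilon\in\R^2$ along $s_\theta$ and passing to a subsequence, $v_\varepsilon(\cdot-x_\varepsilon)\rightharpoonup\psi_\mu$ weakly in $H^1(\R^2)$ with $\|\psi_\mu\|_{L^2(\R^2)}^2=\mu$. Local Rellich compactness together with tightness upgrades the weak convergence to convergence of the nonlinear terms, both the bulk $L^p$ one and, via continuity of $\tau_\theta$, the trace $L^q$ one; combined with the weak lower semicontinuity of the Dirichlet energy this yields $E_\theta(\psi_\mu,\R^2)\le\liminf\varepsilon E(u_\varepsilon,\G_\varepsilon)=\E_{\R^2,\theta}(\mu)$, so $\psi_\mu$ is a ground state of $E_\theta$. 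The resulting chain of inequalities then forces $\|\nabla v_\varepsilon(\cdot-x_\varepsilon)\|_{L^2(\R^2)}\to\|\nabla\psi_\mu\|_{L^2(\R^2)}$, which promotes the convergence to strong convergence in $H^1(\R^2)$, and positivity of $\psi_\mu$ is inherited from that of the $u_\varepsilon$. Consistently with the reduced symmetry, $\psi_\mu$ need not be radial, which is why the statement only asserts positivity and not the radial monotonicity obtained in Theorem~\ref{thm:limZ2}.
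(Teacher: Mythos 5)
Your proposal follows the same two-step scheme as the paper (energy upper bound by sampling a planar competitor on the grid; lower bound and compactness via the affine extension $\mathcal{A}u_\varepsilon$), and its overall logic is sound, but the two halves deserve different assessments. In the upper bound you genuinely deviate from the paper, and your route is arguably cleaner: you sample a \emph{smooth, compactly supported near-minimizer} of $E_\theta$, which exists because $E_\theta$ is continuous on $H^1(\R^2)$ (this is exactly what \eqref{eq:embtrace} and the planar Gagliardo--Nirenberg inequalities give), and for such a function the Riemann-sum treatment of the concentrated term and the estimates of \cite{D24} apply directly. The paper instead restricts an \emph{actual} ground state $\psi_\mu$ of $E_\theta$ to $\G_\varepsilon$; since $\psi_\mu$ is not $H^2(\R^2)$ (its normal derivative jumps across $s_\theta$), this forces the whole regularity apparatus of Lemma \ref{lem:reg_s} — piecewise $H^2$ regularity and the mixed second derivatives \eqref{eq-mixedsecond} — just to make sense of restrictions to grid lines and to compare gradients near $s_\theta$ in Lemma \ref{l.en.upperbound-Z}. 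Your density argument bypasses all of this; the regularity lemma is then needed only for the qualitative description of the limit, not for the convergence proof. Your direct treatment of a general $\Z$-periodic $V$ (vertices clustering within $O(\varepsilon)$ of $s_\theta$ with density $\#V_0/(\varepsilon|\vec v|)$) also replaces the paper's two-stage reduction to $V=\Z\vec{v}$ via \eqref{eq:VtoVsimp}, and is fine for smooth competitors.

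The weak point is the lower bound, where you treat as a routine ``comparison'' the identity
\[
\varepsilon\frac{\beta}{q}\sum_{\vv\in V_\varepsilon}|u_\varepsilon(\vv)|^q=\frac1q\|\tau_\theta (\mathcal{A}u_\varepsilon)\|_{L^q(s_\theta)}^q+o(1)\,.
\]
For ground states $u_\varepsilon$ — which are neither smooth nor compactly supported, and whose grid Dirichlet energy only satisfies $\varepsilon\|u_\varepsilon'\|_{2,\varepsilon}^2\lesssim1$ — the uniform-continuity/Riemann-sum reasoning of your upper bound does not transfer. This identity is precisely the technical core of the paper's argument: it rests on Lemma \ref{l.eps.bound.grid.es.Zper.} (the tangential derivative bound $\|(\tau_\theta(\mathcal{A}u))'\|_{L^2(s_\theta)}^2\lesssim\varepsilon\|u'\|_{2,\varepsilon}^2$, proved from the explicit form of the affine interpolant), on the a priori bounds of Lemma \ref{l.eps.bound-Z} (which you also leave implicit, and which must be derived \emph{before} invoking the extension estimates of \cite{D24}, since those require $\varepsilon\|u_\varepsilon'\|_{2,\varepsilon}^2\lesssim1$), and, for general $V$, on the quantitative estimate \eqref{eq:VtoVsimp}. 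These steps are provable with tools available in the paper, so I would not call your plan wrong, but they cannot be waved through. Finally, on compactness: your identification of transversal mass escape as ``the main obstacle'' is reasonable, but note that the paper dispatches it cheaply — after normalizing only by translations along $\vec v_\theta$, any sequence whose weak limit vanishes loses the trace term entirely, so its energy is bounded below by $\inf\overline{E}_p$, contradicting the strict inequality \eqref{eq:stima}. Your heuristic (``detached mass carries no trace term'') is this same idea, but to make it rigorous in your concentration--compactness framework you need the mixed strict subadditivity $\E_{\R^2,\theta}(\mu)<\E_{\R^2,\theta}(m)+\inf_{H^1_{\mu-m}}\overline{E}_p$, not the pure subadditivity of $\E_{\R^2,\theta}$ you invoke for dichotomy; the Brezis--Lieb splitting used in Lemma \ref{l.ex.crit.} handles both cases at once and is the cleaner route.
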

\begin{thm}
	\label{thm:limZ_S}
	Let $p\in(2,4)$, $q\in(2,4)$, $R>0$ and $V\subset\V_{\G_1}$ be a given $\Z$-periodic set in $\G_1$. For every $\varepsilon>0$, set
	\[
	V_\varepsilon':=\bigcup_{\substack{i\in\Z \\ |i\varepsilon|\leq R}}\left(V+i\vec{v}^{\perp}\right)
	\]
	with $\vec{v}^\perp:=(-v_2,v_1)$, where $\vec{v}=(v_1,v_2)$ is the vector associated to $V$ as in Definition \ref{def:Z-per}. Let then $V_\varepsilon:=\varepsilon V_\varepsilon'$ be the $\Z$-periodic set in $\V_{\G_\varepsilon}$ associated to $V_\varepsilon'$ in $\G_1$, and
	\[
	\alpha = \frac12\,,\qquad\beta = \frac{\#\left(\V_{\G_1}\cap Q_0\right)}{\# V_0}\varepsilon\,,
	\]
	where $Q_0, V_0$ are the sets associated to $V$ as in Remark \ref{rem:Per_dec} below. Let also
	\[
	\theta:=\begin{cases}
		\text{\normalfont arctan}\frac{v_2}{v_1} & \text{if }v_1\neq0\\
		\frac\pi2 & \text{if }v_1=0\,.
	\end{cases}
	\]
	Then, for every $\mu>0$, 
	\[
	\lim_{\varepsilon\to0}\varepsilon\E_{\G_\varepsilon}\left(\frac{2\mu}{\varepsilon}\right)=\E_{\R^2,\theta,R}(\mu)\,.
	\]
	Furthermore, for every positive ground state $u_\varepsilon$ of $E(\cdot,\G_\varepsilon)$ in $H_{\frac{2\mu}{\varepsilon}}^1(\G_\varepsilon)$ there exists $x_\varepsilon\in \R^2$ such that, up to subsequences,
	\[
	\mathcal{A}u_\varepsilon(\cdot-x_\varepsilon)\to\varphi_\mu\quad\text{in }H^1(\R^2)\quad\text{as }\varepsilon\to0\,,
	\]
	where $\varphi_\mu$ is a positive ground state of $E_{\theta,R}(\cdot,\R^2)$ in $H_\mu^1(\R^2)$. 
\end{thm}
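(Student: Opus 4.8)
The plan is to treat the strip case as a confined version of the $\Z^2$-periodic limit of Theorem \ref{thm:limZ2}, and to follow the same two-sided scheme used there and in Theorem \ref{thm:limZ_theta}: the only structural novelty is that the lattice sum over $V_\varepsilon$ now fills the two-dimensional strip $S_{\theta,R}$ (of macroscopic width), rather than the whole plane or a single line. Throughout I would work with the extension operator $\mathcal A$ of \cite{D24} and the norm asymptotics it provides along sequences with uniform $H^1$ bounds, namely $\|\mathcal A u\|_{L^2(\R^2)}^2=\tfrac\varepsilon2\|u\|_{L^2(\G_\varepsilon)}^2(1+o(1))$, $\|\mathcal A u\|_{L^p(\R^2)}^p=\tfrac\varepsilon2\|u\|_{L^p(\G_\varepsilon)}^p(1+o(1))$ and $\|\nabla\mathcal A u\|_{L^2(\R^2)}^2=\varepsilon\|u'\|_{L^2(\G_\varepsilon)}^2(1+o(1))$. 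It is exactly this factor-$\tfrac12$ mismatch between the bulk norms and the Dirichlet energy that forces the rescaling $2\mu/\varepsilon$ of the mass and the value $\alpha=\tfrac12$.

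First I would prove the upper bound $\limsup_{\varepsilon\to0}\varepsilon\E_{\G_\varepsilon}(2\mu/\varepsilon)\le\E_{\R^2,\theta,R}(\mu)$. Fixing an almost-minimizer $\varphi\in H^1_\mu(\R^2)$ of $E_{\theta,R}$ (which may be taken smooth and rapidly decaying), I would sample it at the vertices of $\G_\varepsilon$, interpolate affinely along the edges, and renormalize the $L^2(\G_\varepsilon)$-norm to the exact value $2\mu/\varepsilon$. By the identities above the bulk terms converge to $\tfrac12\|\nabla\varphi\|_{L^2(\R^2)}^2$ and $\tfrac1p\|\varphi\|_{L^p(\R^2)}^p$, while the concentrated term is a Riemann sum which, with $\beta=\tfrac{\#(\V_{\G_1}\cap Q_0)}{\#V_0}\varepsilon$ and the two-dimensional vertex density of $V_\varepsilon$ read off from $V_0,Q_0$ as in Remark \ref{rem:Per_dec}, converges to $\tfrac1q\|\varphi\|_{L^q(S_{\theta,R})}^q$. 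Hence $\varepsilon E(u_\varepsilon,\G_\varepsilon)\to E_{\theta,R}(\varphi,\R^2)$, and optimizing over $\varphi$ gives the bound.

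For the lower bound together with compactness I would start from positive grid ground states $u_\varepsilon$ at mass $2\mu/\varepsilon$. The upper bound and the norm asymptotics yield that $\{\mathcal A u_\varepsilon\}$ is bounded in $H^1(\R^2)$ with $\|\mathcal A u_\varepsilon\|_{L^2(\R^2)}^2\to\mu$, and that $E_{\theta,R}(\mathcal A u_\varepsilon,\R^2)\le\varepsilon E(u_\varepsilon,\G_\varepsilon)+o(1)$; the only genuinely new estimate is the comparison, uniform in $\varepsilon$, between the lattice sum $\beta\sum_{\vv\in V_\varepsilon}|\mathcal A u_\varepsilon(\vv)|^q$ and the strip integral $\|\mathcal A u_\varepsilon\|_{L^q(S_{\theta,R})}^q$ (recall $\mathcal A u_\varepsilon(\vv)=u_\varepsilon(\vv)$). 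After these reductions $\mathcal A u_\varepsilon$ is an almost-minimizing sequence for $\E_{\R^2,\theta,R}(\mu)$. Since the strip term breaks translation invariance only orthogonally to $\vec v_\theta$, the recentring $x_\varepsilon$ is one-parameter along $s_\theta$; I would exclude vanishing using $\E_{\R^2,\theta,R}(\mu)<0$ and dichotomy using the strict subadditivity of $m\mapsto\E_{\R^2,\theta,R}(m)$, obtaining $\mathcal A u_\varepsilon(\cdot-x_\varepsilon)\to\varphi_\mu$ strongly in $H^1(\R^2)$ to a ground state. This simultaneously produces the matching lower bound $\liminf\varepsilon\E_{\G_\varepsilon}(2\mu/\varepsilon)\ge\E_{\R^2,\theta,R}(\mu)$, completing both the energy convergence and the announced $H^1$-convergence.

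The main obstacle is the joint control of the concentrated term and the non-compactness. Passing to the limit in $\beta\sum_{\vv\in V_\varepsilon}|\mathcal A u_\varepsilon(\vv)|^q$ requires strong convergence of $\mathcal A u_\varepsilon$ in $L^q(S_{\theta,R})$ together with a discrete-to-continuous defect estimate between the sum over the $\sim R/\varepsilon$ stacked copies of $V$ and the two-dimensional strip integral, and this must be made uniform precisely where the strip has macroscopic width. On the compactness side, establishing the strict subadditivity of $\E_{\R^2,\theta,R}$ — the binding inequality preventing mass from splitting between the strip and its complement, where only the $L^p$ term survives — is the crux; I expect to prove it by comparing $E_{\theta,R}$ with the pure-$L^p$ plane functional and exploiting the extra negative strip contribution.
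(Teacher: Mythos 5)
Your proposal is correct and follows essentially the same route as the paper, whose proof of this theorem is literally a reduction to the schemes of Theorems \ref{thm:limZ2}--\ref{thm:limZ_theta} (upper bound by sampling a plane minimizer, a priori bounds, then concentration-compactness for the almost-minimizing sequence $\mathcal{A}u_\varepsilon$ recentred along the strip, with vanishing excluded by the strict inequality against the pure-$L^p$ level and dichotomy by the Brezis--Lieb/scaling argument of Lemma \ref{l.ex.crit.}). The one ingredient you single out as the genuine novelty --- the uniform discrete-to-continuum comparison between $\beta\sum_{\vv\in V_\varepsilon}|u_\varepsilon(\vv)|^q$ and $\|\mathcal{A}u_\varepsilon\|_{L^q(S_{\theta,R})}^q$ --- is exactly the point the paper isolates, handled by repeating the argument of Lemma \ref{l.GN-grid-i} within the strip.
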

Theorems \ref{thm:limZ_theta}--\ref{thm:limZ_S} are particularly interesting as they show that models with nonlinear $\delta$-type vertex conditions on grids can be used to approximate nonlinear singular problems in Euclidean spaces. In particular, whereas the ground state problem $\E_{\R^2,\theta,R}(\mu)$ is similar to the ground state problem $\E_{\R^2}(\mu)$, as its ground states satisfy
\begin{equation}
	\label{NLS_Strip}
-\Delta u+\lambda u=|u|^{p-2}u+\chi_{S_{\theta,R}}|u|^{q-2}u\qquad\text{in}\quad\R^2
\end{equation}
($\chi_{S_{\theta,R}}$ denoting the characteristic function of $S_{\theta,R}$), the ground state problem $\E_{\R^2,\theta}$ is highly singular, as solutions to this problem satisfy the following singular NLS equation in $\R^2$
\begin{equation}
	\label{NLS_Singular}
\begin{cases}
	\displaystyle-\Delta u+\lambda u =|u|^{p-2}u & \text{in }\R^2\setminus s_\theta\\[.2cm]
	\displaystyle\frac{\partial u^+}{\partial\vec{v}_\theta^\perp}-\frac{\partial u^-}{\partial\vec{v}_\theta^\perp}=-|\tau_\theta u|^{q-2}\tau_\theta u & \text{in }s_\theta\,.
\end{cases}
\end{equation}
Existence results for ground states of $\E_{\R^2,\theta}$ and $\E_{\R^2,\theta,R}$ (and further details on \eqref{NLS_Singular}) are given in Section \ref{sec:R2} below. As in Theorem \ref{thm:limZ2}, also in Theorems \ref{thm:limZ_theta}--\ref{thm:limZ_S} the convergence of ground states is proved for all the values $p,q$ for which the limit problem admits ground states for every $\mu>0$. Even though the doubly nonlinear problem on grids admits existence of ground states for every mass in a larger regime of nonlinearities, it is again straightforward to show that the ground state energy level $\varepsilon\E_{\G_\varepsilon}(2\mu/\varepsilon)$ converges to that of the corresponding limit problem in $\R^2$ for every $p,q>2$, entailing again that the energy of ground states diverges to $-\infty$ whenever $p>4$ or $q>3$ in Theorem \ref{thm:limZ_theta} and $p>4$ in Theorem \ref{thm:limZ_S}.

To conclude, we point out that in principle it may be interesting to investigate the singular limit of ground states on $\G_\varepsilon$ even in the case of finitely many nonlinear vertices. For instance, one may be tempted to guess that, when $V$ is a fixed subset of $\V_{\G_1}$ with $\#V<\infty$, ground states of $E(\cdot,\G_\varepsilon)$  with $V_\varepsilon=\varepsilon V$ converge to ground states of some limit model in the plane with a nonlinearity concentrated at a single point. However, from the technical point of view this is expected to require sharp estimates on the $L^\infty$ norm of ground states on $\G_\varepsilon$ as $\varepsilon\to0$, in place of the analogous ones we derived for $L^p$ and $L^q$ norms in the proofs of Theorems \ref{thm:limZ2}--\ref{thm:limZ_theta}--\ref{thm:limZ_S}. Since such results on the $L^\infty$ norm are currently out of reach, at present we are not able to tackle this problem.

\smallskip
The remainder of the paper is organized as follows. Section \ref{sec:prel} discuss some preliminary results needed in the analysis. Section \ref{sec:exgsG} contains the discussion of our existence results for ground states on grids, proving Theorems \ref{thm:GScomp}--\ref{thm:GSZ}--\ref{thm:GSZ2}. Section \ref{sec:R2} proves existence and basic regularity for the ground state problems associated with $\E_{\R^2,\theta}$, $\E_{\R^2,\theta,R}$. The singular limit is addressed in the last two sections: in Section \ref{sec:limZ2} for $\Z^2$-periodic nonlinear vertices with the proof of Theorem \ref{thm:limZ2}, and in Section \ref{sec:limZ} for the $\Z$-periodic case with the proofs of Theorems \ref{thm:limZ_theta}--\ref{thm:limZ_S}.

\medskip
{\bf Notation.} In what follows, we will simply write $\|u\|_{p,\varepsilon}$ or $\|u\|_p$ for the $L^p$ norm of $u$ on the grid $\G_\varepsilon$, depending on whether it is important or not to underline the dependence on $\varepsilon$. The domain of integration will be written explicitly only in specific cases for which it is necessary.

\section{Preliminaries}
\label{sec:prel}

In this section we collect some preliminary facts and estimates that will be largely used in the rest of the paper.

Throughout, we will think of the two-dimensional grid $\G_\varepsilon=(\V_{\G_\varepsilon}, \mathbb{E}_{\G_\varepsilon})$ with edgelength $\varepsilon$ as the subset of $\R^2$ with vertices on $\varepsilon\Z^2$ and edges between every couple of vertices at distance $\varepsilon$ in $\R^2$, that is
\[
\vv\in\V_{\G_\varepsilon} \cong \varepsilon(i,j)\in\varepsilon\Z^2\subset\R^2
\]
and
\[
e\in \mathbb{E}_{\G_\varepsilon} \Longleftrightarrow e\cong\varepsilon(i,i+1)\times\left\{\varepsilon j\right\}\text{ or }e\cong\left\{\varepsilon i\right\}\times\varepsilon(j,j+1)\,,\text{ for some }(i,j)\in\Z^2\,.
\]
Sometimes, it will also be convenient to interpret $\G_\varepsilon$ as
\[
\G_\varepsilon=\bigg(\bigcup_{j\in\Z}H_{\varepsilon j}\bigg)\cup\bigg(\bigcup_{i\in\Z}V_{\varepsilon i}\bigg),
\]
where $H_{\varepsilon j}$, $V_{\varepsilon i}$ are the horizontal line $y=\varepsilon j$ and the vertical line $x=\varepsilon i$ in $\R^2$ respectively, or as
\begin{equation}
\label{eq:L}
\G_\varepsilon=\bigcup_{(i,j)\in\Z^2}L_{i,j}^\varepsilon\,,
\end{equation}
where $L_{i,j}^\varepsilon$ is the union of the vertex $\varepsilon(i,j)$ and of the edges $\varepsilon(i,i+1)\times\left\{\varepsilon j\right\}$, $\left\{\varepsilon i\right\}\times\varepsilon(j,j+1)$.

\begin{rem}
	\label{rem:Per_dec}
	In the following we will sometimes exploit specific periodic decompositions of $\G_\varepsilon$ induced by periodic subsets of its vertices. Indeed, taking for simplicity $\varepsilon=1$, it is easily seen that for every given set $V\subseteq\V_{\G_1}$ that is $\Z^2$-periodic according to Definition \ref{def:Z2-per}, i.e. 
	\[
	V=V+i\vec{v}_1+j\vec{v}_2\,,\qquad\forall (i,j)\in\Z^2
	\]
	for suitable linearly independent vectors $\vec{v}_1,\vec{v}_2\in\Z^2$, there exists a bounded set $Q_{0}\subset\G_1$ such that, setting $V_{0}:=Q_{0}\cap V$, there results
	\begin{equation}
		\label{eq-aim}
	V=\bigcup_{(i,j)\in\Z^2}(V_0+i\vec{v}_1+j\vec{v}_2),\qquad\G_1=\bigcup_{(i,j)\in\Z^2}(Q_0+i\vec{v}_1+j\vec{v}_2)
	\end{equation}
	and $\left(Q_0+i\vec{v}_1+j\vec{v}_2\right)\cap(Q_0+i'\vec{v}_1+j'\vec{v}_2)=\emptyset$ for every $(i,j)\neq(i',j')$. Such a set $Q_0$ can be constructed explicitly as follows. Writing $\vec{v}_1=(v_{1,x}, v_{1,y}),\vec{v}_2=(v_{2,x},v_{2,y})$, set $k:=|v_{2,x}v_{1,y}-v_{1,x}v_{2,y}|>0$ and 
	\[
	Q:=\bigcup_{-k\leq i,j<k}L_{i,j}^1
	\]
	with $L_{i,j}^1$ be as in \eqref{eq:L}. Since one can check that, by definition, the vectors $(k,0), (0,k)$ are integer combinations of $\vec{v}_1,\vec{v}_2$, it holds
	\begin{equation}
		\label{eq-first_cover}
		\G_1=\bigcup_{(i,j)\in\Z^2}(Q+i\vec{v}_1+j\vec{v}_2).
	\end{equation}
	Moreover, the set of vertices of $Q$ is $\Z^2\cap[-k,k)^2$. On this set, consider the equivalence
	\[
	\vv\sim\textsc{w} \Longleftrightarrow \vv = \textsc{w}+i\vec{v}_1+j\vec{v}_2\,,\text{ for some }(i,j)\in\Z^2
	\]
	and, for each equivalence class, pick the vertex closest to $(0,0)$ (note that such choice may not be unique). Let, then, $\left\{(i_n,j_n)\right\}_{n\in I}$ denote the sequence of such chosen vertices, with $I\subset\N$ finite since $\Z^2\cap[-k,k)^2$ is finite, and take
	\[
	Q_0:=\bigcup_{n\in I}L_{i_n,j_n}^1\,.
	\]
	By construction, $Q_0\subseteq Q$, $(Q_0+i\vec{v}_1+j\vec{v}_2)\cap(Q_0+i'\vec{v}_1+j'\vec{v}_2)=\emptyset$ for every $(i,j)\neq(i',j')$, and there exists $I_1,\,I_2\subset\N$ finite such that
	\[
	Q\subseteq \bigcup_{(i,j)\in I_1\times I_2}(Q_0+i\vec{v}_1+j\vec{v}_2).
	\]
	Hence, combining with \eqref{eq-first_cover}, one gets \eqref{eq-aim}.
	
	Observe that, given a $\Z$-periodic set $V\subset\V_{\G_\varepsilon}$ according to Definition \ref{def:Z-per}, arguing as above it is straightforward to construct $Q_0\subset\G_\varepsilon$ such that $(Q_0+i\vec{v})\cap(Q_0+i'\vec{v})=\emptyset$ for every $i,i'\in\Z$, $i\neq i'$, and 
	\[
	V=\bigcup_{i\in\Z}(V_0+i\vec{v}),\qquad \G_\varepsilon'=\bigcup_{i\in\Z}(Q_0+i\vec{v})\,,
	\]
	where $V_0:=Q_0\cap V$ and
	\begin{equation}
		\label{eq-Gprime}
	\G_\varepsilon':=\bigcup_{\varepsilon(i,j)\in J_\varepsilon}L^\varepsilon_{i,j}
	\end{equation}
	with
	\[
	J_\varepsilon:=\varepsilon\Z^2\cap\left\{P\in\R^2\,:\,|(P-P_0)\cdot \vec{v}^\perp|\leq r\right\}
	\]
	and $\vec{v}, P_0, r$ as in Definition \ref{def:Z-per}. Note also that the strip $\big\{P\in\G\,:\,|(P-P_0)\cdot \vec{v}^\perp|\leq r\big\}\subseteq\G_\varepsilon'$.
\end{rem}

A main tool in our analysis will be various Gagliardo-Nirenberg type inequalities on grids. The next lemma starts recalling some of them, that are by now well-known.
\begin{lem}\label{l.GN-eps.}
	Given $p\in(2,\infty]$ and $\varepsilon>0$, there results
	\begin{equation}\label{GN-eps.1}
		\|u\|_{p,\varepsilon}\lesssim \|u\|_{2,\varepsilon}^{\frac{1}{2}+\frac{1}{p}}\|u^\prime\|_{2,\varepsilon}^{\frac{1}{2}-\frac{1}{p}}\qquad\forall u\in H^1(\G_\varepsilon).
	\end{equation}
	Moreover, for every $p\in(2,\infty)$, there results
	\begin{equation}\label{GN-eps.2}
		\|u\|_{p,\varepsilon}\lesssim \varepsilon^{\frac{1}{2}-\frac{1}{p}}\|u\|_{2,\varepsilon}^{\frac{2}{p}}\|u^\prime\|_{2,\varepsilon}^{1-\frac{2}{p}}\qquad\forall u\in H^1(\G_\varepsilon),
	\end{equation}
	and, for every $p\in[4,6]$,
	\begin{equation}\label{GN-eps-interd.}
		\|u\|_{p,\varepsilon}\lesssim \varepsilon^\frac{6-p}{2p}\|u\|_{2,\varepsilon}^{1-\frac{2}{p}}\|u^\prime\|_{2,\varepsilon}^\frac{2}{p}\qquad\forall u\in H^1(\G_\varepsilon).
	\end{equation}
\end{lem}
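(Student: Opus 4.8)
The plan is to prove \eqref{GN-eps.1} directly as a scale-invariant one-dimensional estimate, to reduce \eqref{GN-eps.2} to its $\varepsilon=1$ content by rescaling, and to deduce \eqref{GN-eps-interd.} from the first two by interpolation. For \eqref{GN-eps.1} I would first establish the endpoint bound $\|u\|_\infty^2\lesssim\|u\|_{2,\varepsilon}\|u'\|_{2,\varepsilon}$, with constant independent of $\varepsilon$. Given $u\in H^1(\G_\varepsilon)$ and any point $P\in\G_\varepsilon$, the point $P$ lies on a full horizontal line $H_{\varepsilon j}$ or vertical line $V_{\varepsilon i}$; since $u$ is continuous on $\G_\varepsilon$, its restriction $w$ to that line belongs to $H^1(\R)$, and the elementary identity $w(x)^2=\int_{-\infty}^x 2ww'$ gives $|w(x)|^2\le 2\|w\|_{L^2(\R)}\|w'\|_{L^2(\R)}$. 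As $\int_{H_{\varepsilon j}}|u|^2\le\|u\|_{2,\varepsilon}^2$ and $\int_{H_{\varepsilon j}}|u'|^2\le\|u'\|_{2,\varepsilon}^2$ (and similarly on vertical lines), taking the supremum over $P$ yields the claimed $L^\infty$ bound. For finite $p$ I would then write $\|u\|_{p,\varepsilon}^p\le\|u\|_\infty^{p-2}\|u\|_{2,\varepsilon}^2$ and substitute, which reproduces the exponents $\tfrac12+\tfrac1p$ and $\tfrac12-\tfrac1p$; the absence of an $\varepsilon$-prefactor reflects the exact scale invariance of \eqref{GN-eps.1}.

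For \eqref{GN-eps.2} I would reduce to the unit grid via the change of variables $v:=u(\varepsilon\,\cdot)$ on $\G_1$, under which $\|v\|_{2,1}=\varepsilon^{-1/2}\|u\|_{2,\varepsilon}$, $\|v'\|_{2,1}=\varepsilon^{1/2}\|u'\|_{2,\varepsilon}$ and $\|v\|_{p,1}=\varepsilon^{-1/p}\|u\|_{p,\varepsilon}$; inserting these into the $\varepsilon=1$ inequality produces exactly the prefactor $\varepsilon^{1/2-1/p}$. The genuinely two-dimensional content is the estimate $\|u\|_{p,1}\lesssim\|u\|_{2,1}^{2/p}\|u'\|_{2,1}^{1-2/p}$ on $\G_1$, which I expect to be the main obstacle, and which I would either cite from \cite{ADST19} or prove by adapting the Ladyzhenskaya slicing argument to the grid. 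The difficulty is that a point on a horizontal edge need not lie on any vertical line, so the transverse slicing available in $\R^2$ must be routed through the vertices: on each edge one bounds $\|u\|_{L^\infty}$ by the value at an endpoint plus the tangential $L^2$-norm of $u'$, controls that endpoint value by the supremum of $|u|$ on the transverse line through the corresponding vertex, and finally sums over both families of edges using Cauchy--Schwarz across lines, which is what recovers the $2/p$ versus $1-2/p$ balance characteristic of the two-dimensional exponent.

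Finally, \eqref{GN-eps-interd.} follows by interpolation. For $p\in[4,6]$ I would write $\tfrac1p=\tfrac{\lambda}{4}+\tfrac{1-\lambda}{6}$ with $\lambda=\tfrac{12}{p}-2\in[0,1]$, so that log-convexity of the $L^p$ norms gives $\|u\|_{p,\varepsilon}\le\|u\|_{4,\varepsilon}^{\lambda}\|u\|_{6,\varepsilon}^{1-\lambda}$. Estimating $\|u\|_{4,\varepsilon}$ by \eqref{GN-eps.2} at $p=4$ (whose prefactor is $\varepsilon^{1/4}$) and $\|u\|_{6,\varepsilon}$ by \eqref{GN-eps.1} at $p=6$, and collecting exponents, I obtain the $\varepsilon$-power $\lambda/4=\tfrac{6-p}{2p}$, the $\|u\|_{2,\varepsilon}$-exponent $\tfrac{\lambda}{2}+\tfrac23(1-\lambda)=1-\tfrac2p$, and the $\|u'\|_{2,\varepsilon}$-exponent $\tfrac{\lambda}{2}+\tfrac13(1-\lambda)=\tfrac2p$, which is precisely \eqref{GN-eps-interd.}. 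As a consistency check, the endpoints $p=4$ and $p=6$ return \eqref{GN-eps.2} and \eqref{GN-eps.1} respectively, so the three inequalities fit together as claimed.
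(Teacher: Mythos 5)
Your proposal is correct, but it is organized differently from the paper's proof, which is a two-line affair: the paper cites \cite[Theorems 2.1, 2.3, Corollary 2.4]{ADST19} for all three inequalities at $\varepsilon=1$ and then obtains the general case by the scaling $v(x):=u(\varepsilon x)$, exactly the change of variables you use for \eqref{GN-eps.2}. What you do differently is to make two of the three inequalities self-contained: your direct proof of \eqref{GN-eps.1} via restriction to full horizontal/vertical lines (every point of $\G_\varepsilon$ lies on one, the restriction is in $H^1(\R)$ with norms dominated by the grid norms, then $\|u\|_\infty^2\le 2\|u\|_{2,\varepsilon}\|u'\|_{2,\varepsilon}$ and $\|u\|_{p,\varepsilon}^p\le\|u\|_\infty^{p-2}\|u\|_{2,\varepsilon}^2$) is sound and manifestly uniform in $\varepsilon$, and your interpolation derivation of \eqref{GN-eps-interd.} from \eqref{GN-eps.2} at $p=4$ and \eqref{GN-eps.1} at $p=6$ checks out exactly (the exponents $\lambda/4=\tfrac{6-p}{2p}$, $1-\tfrac2p$, $\tfrac2p$ are all correct, and the endpoints match). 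The only place where you genuinely lean on the literature is the two-dimensional estimate $\|u\|_{p,1}\lesssim\|u\|_{2,1}^{2/p}\|u'\|_{2,1}^{1-2/p}$ on $\G_1$, which you rightly identify as the real content of the lemma; your sketch of the vertex-routed slicing is only a sketch, but since the paper itself delegates precisely this (and more) to \cite{ADST19}, citing it there puts you on equal footing. The net effect is that your argument needs the external reference only for one inequality instead of three, at the cost of a longer write-up; both routes are valid.
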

\begin{proof}
	The case $\varepsilon=1$ has been proved in \cite[Theorem 2.1, Theorem 2.3, Corollary 2.4]{ADST19}, while the desired inequalities for $u\in H^1(\G_\varepsilon)$ with $\varepsilon\neq1$ follow by the corresponding ones on $\G_1$ applied to $v(x):=u(\varepsilon x)$.
\end{proof}

The next lemma provides a Gagliardo-Nirenberg type estimates for concentrated nonlinearities on $\Z^2$-periodic sets.
\begin{lem}\label{l.GN-grid-i}
	Let $q\geq2$, $\varepsilon>0$ and $V\subseteq \V_{\G_\varepsilon}$ be a $\Z^2$-periodic set according to Definition \ref{def:Z2-per}. Then
	\begin{equation}
		\label{eq-GNgrid2per}
	\left|\varepsilon\frac{2\#(\V_{\G_\varepsilon}\cap Q_0)}{\#V_0}\sum_{\vv\in  V}|u(\vv)|^q - \|u\|_{q,\varepsilon}^q\right|\lesssim \varepsilon\|u\|_{2(q-1),\varepsilon}^{q-1}\|u^\prime\|_{2,\varepsilon} \qquad \forall u\in H^1(\mathcal{G}_\varepsilon), 
	\end{equation}
	where $Q_0,V_0$ are the sets associated to $V$ as in Remark \ref{rem:Per_dec}.
\end{lem}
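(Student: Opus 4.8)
The plan is to reduce to the case $\varepsilon=1$ by scaling, then to split the difference into a purely discrete redistribution term and a discrete-to-continuous term, each of which is controlled by a one-dimensional fundamental-theorem-of-calculus estimate followed by H\"older's inequality.

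First I would rescale. Setting $v(x):=u(\varepsilon x)$ one has $v\in H^1(\G_1)$, and all the relevant quantities are homogeneous under this substitution: $\|u\|_{q,\varepsilon}^q=\varepsilon\|v\|_{q,1}^q$, $\sum_{\vv\in V}|u(\vv)|^q=\sum_{\vv\in V'}|v(\vv)|^q$ (with $V=\varepsilon V'$), $\|u\|_{2(q-1),\varepsilon}^{q-1}=\varepsilon^{1/2}\|v\|_{2(q-1),1}^{q-1}$ and $\|u'\|_{2,\varepsilon}=\varepsilon^{-1/2}\|v'\|_{2,1}$. Since the fundamental cell scales with $\varepsilon$, so that $\#(\V_{\G_\varepsilon}\cap Q_0)=\#(\V_{\G_1}\cap Q_0)=:N_Q$, both sides of \eqref{eq-GNgrid2per} acquire exactly one overall factor $\varepsilon$ and it suffices to prove the estimate on $\G_1$. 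I abbreviate also $N_V:=\#V_0$.

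Next I would insert the full vertex sum as an intermediate term, writing
\[
\frac{2N_Q}{N_V}\sum_{\vv\in V'}|v(\vv)|^q - \|v\|_{q,1}^q = \text{(I)} + \text{(II)},
\]
where $\text{(II)}:=2\sum_{\vv\in\V_{\G_1}}|v(\vv)|^q-\|v\|_{q,1}^q$ and $\text{(I)}:=\frac{2N_Q}{N_V}\sum_{\vv\in V'}|v(\vv)|^q-2\sum_{\vv\in\V_{\G_1}}|v(\vv)|^q$. For (II) I would use the decomposition $\G_1=\bigcup_{(i,j)}L_{i,j}^1$ from \eqref{eq:L}: each $L_{i,j}^1$ carries its vertex together with the two incident (right and up) unit edges, so it is enough to bound, on a single edge $e=[0,1]$ with endpoint value $v(0)$,
\[
\left|\int_e |v|^q\,dx - |v(0)|^q\right| = \left|\int_0^1\!\!\int_0^x q|v|^{q-2}v\,v'\,dt\,dx\right| \le q\int_e |v|^{q-1}|v'|\,dx.
\]
Summing the two edges of each $L_{i,j}^1$ over all $(i,j)$ and applying H\"older gives $|\text{(II)}|\le q\int_{\G_1}|v|^{q-1}|v'|\le q\|v\|_{2(q-1),1}^{q-1}\|v'\|_{2,1}$.

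The heart of the matter is term (I). Using the periodic decomposition of Remark \ref{rem:Per_dec}, I would group both sums cell by cell: for the cell $(i,j)$ set $A_{ij}:=(\V_{\G_1}\cap Q_0)+i\vec v_1+j\vec v_2$ (with $\#A_{ij}=N_Q$) and $B_{ij}:=V_0+i\vec v_1+j\vec v_2\subseteq A_{ij}$ (with $\#B_{ij}=N_V$), so that $\text{(I)}=2\sum_{(i,j)}\big(\tfrac{N_Q}{N_V}\sum_{\vv\in B_{ij}}|v(\vv)|^q-\sum_{\vv\in A_{ij}}|v(\vv)|^q\big)$. A double-counting identity then rewrites each cell contribution as $\frac{1}{N_V}\sum_{\vv\in B_{ij}}\sum_{\textsc{w}\in A_{ij}}\big(|v(\vv)|^q-|v(\textsc{w})|^q\big)$, and each difference is estimated by a path integral $\big|\int_{\gamma_{\vv,\textsc{w}}}\frac{d}{ds}|v|^q\,ds\big|\le q\int_{\gamma_{\vv,\textsc{w}}}|v|^{q-1}|v'|\,ds$, where $\gamma_{\vv,\textsc{w}}$ is a path in $\G_1$ joining $\vv$ to $\textsc{w}$ of length uniformly bounded by the diameter of $Q_0$. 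The step requiring the most care, and the one I expect to be the main obstacle, is to sum these integrals over all cells without loss: the crucial point is a bounded-overlap property, namely that since every $\gamma_{\vv,\textsc{w}}$ with $\vv,\textsc{w}$ in the $(i,j)$-th cell stays in a fixed bounded neighborhood of that cell, and the cells tile the plane along the lattice generated by $\vec v_1,\vec v_2$, any given edge of $\G_1$ is met by at most a fixed number $C=C(N_Q,N_V,Q_0)$ of the chosen paths. Granting this, H\"older yields
\[
|\text{(I)}| \le \frac{2q}{N_V}\sum_{(i,j)}\sum_{\vv\in B_{ij}}\sum_{\textsc{w}\in A_{ij}}\int_{\gamma_{\vv,\textsc{w}}}|v|^{q-1}|v'|\,ds \le C'\int_{\G_1}|v|^{q-1}|v'| \le C'\|v\|_{2(q-1),1}^{q-1}\|v'\|_{2,1}.
\]
Combining the bounds for (I) and (II) and undoing the rescaling gives \eqref{eq-GNgrid2per}; everything apart from the uniform overlap count reduces to the one-dimensional estimate above together with H\"older's inequality.
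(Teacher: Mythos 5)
Your proposal is correct and follows essentially the same route as the paper's proof: the same scaling reduction to $\varepsilon=1$, the same splitting into a full-vertex-sum comparison (your term (II), the paper's case $V=\Z^2$) and a cell-wise redistribution term (your term (I), handled in the paper by the identical double-counting identity), each controlled by the fundamental theorem of calculus plus H\"older. The only difference is in the final summation for (I): the paper bounds each cell's contribution directly by $qm\int_{Q_0}|u|^{q-1}|u'|\,dx$ and sums using disjointness of the translates of $Q_0$, while you connect vertices by paths and invoke a uniform bounded-overlap count --- an equally valid, slightly more careful version of the same step, which in particular remains sound even when $Q_0$ is not connected.
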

\begin{proof}
	Note first that by definition any $\Z^2$-periodic set $V\subseteq V_{\G_\varepsilon}$ satisfies $V=\varepsilon V'$ for some $\Z^2$-periodic set $V'\subseteq\V_{\G_1}$. As a consequence, if \eqref{eq-GNgrid2per} holds for $\varepsilon=1$, then it holds for $\varepsilon\neq0$ too. Indeed, since $w(x):=\varepsilon^{1/q}u(\varepsilon x)$ belongs to $H^1(\G_1)$ for every $u\in H^1(\G_\varepsilon)$, we have
	\[
	\begin{split}
	\left|\varepsilon\frac{2\#(\V_{\G_\varepsilon}\cap Q_0)}{\#V_0}\sum_{\vv\in V}|u(\vv)|^q-\|u\|_{q,\varepsilon}^q\right|&\,=\left|\frac{2\#(\V_{\G_1}\cap Q_0')}{\#V_0'}\sum_{\vv\in V'}|w(\vv)|^q-\|w\|_{q,1}^q\right|\\
	&\, \lesssim \|w\|_{2(q-1),1}^{q-1}\|w^\prime\|_{2,1}=\varepsilon\|u\|_{2(q-1),\varepsilon}^{q-1}\|u^\prime\|_{2,\varepsilon}.
	\end{split}
	\]
	
	We are thus left to prove \eqref{eq-GNgrid2per} for $\varepsilon=1$. Consider first the case $ V=\mathbb Z^2$.  Given $u\in H^1(\G_1)$, by \eqref{eq:L}, the fundamental theorem of calculus and H\"older inequality we have
	\begin{equation}
	\label{eq:estZ2}
	\begin{split}
	\left|2\sum_{\vv\in\mathbb Z^2} |u(\vv)|^q - \|u\|_{q,1}^q\right|&\,\le \sum_{(i,j)\in\mathbb Z^2} \left|2|u(i,j)|^q - \|u\|_{L^q(L_{i,j}^1)}^q\right| =\sum_{(i,j)\in\mathbb Z^2}\left|\int_{L_{i,j}^1}(|u(i,j)|^q-|u(x)|^q)\,dx\right|\\
	&\,\le \sum_{(i,j)\in\mathbb Z^2}\int_{L_{i,j}^1}\int_{L_{i,j}^1}|(|u(y)|^q)'|\,dy\,dx=2q\sum_{(i,j)\in\mathbb Z^2}\int_{L_{i,j}^1}|u(y)|^{q-1}|u'(y)|\,dy\\
	&\,=2q\int_{\G_1}|u(y)|^{q-1}|u'(y)|\,dy\lesssim \|u\|_{2(q-1),1}^{q-1}\|u'\|_{2,1}\,,
	\end{split}
	\end{equation}
	which proves \eqref{eq-GNgrid2per} for $V=\Z^2$, since in this case by Remark \ref{rem:Per_dec} one can take $Q_0=L_{0,0}^1$, so that $V_0=\left\{(0,0)\right\}$ and $\frac{2\#(\V_{\G_1}\cap Q_0)}{\#V_0}=2$.
	
	Consider now a general $\Z^2$-periodic set $V\subset \V_{\G_1}$, and let $Q_0, V_0$  be the corresponding sets as in Remark \ref{rem:Per_dec}.  Since we can write
	\[
		\begin{split}
		 \frac{\#(\mathbb Z^2\cap Q_0)}{\#V_0}\sum_{\vv\in V_0}|u(\vv)|^q&\,- \sum_{\vv\in\mathbb Z^2\cap Q_0}|u(\vv)|^q\\
		 &\,= \frac1{\#V_0}\left(\#(\mathbb Z^2\cap Q_0)\sum_{\vv\in V_0}|u(\vv)|^q- \#V_0\sum_{\vv\in\mathbb Z^2\cap Q_0}|u(\vv)|^q\right)\\
		 &\,=\frac1{\#V_0}\sum_{\ell=1}^m\left(|u(v_\ell)|^q-|u(w_\ell)|^q\right)
		 \end{split}
	\]
	with $m=\#(\mathbb Z^2\cap Q_0) \#V_0$ and $(v_\ell)_{\ell=1}^m,\,(w_\ell)_{\ell=1}^m$ two finite sequences of possibly non distinct vertices in $\mathbb Z^2\cap Q_0$, arguing as before we obtain
	\[
	\begin{split}
	  \left|\frac{\#(\mathbb Z^2\cap Q_0)}{\#V_0}\sum_{\vv\in V_0}|u(\vv)|^q- \sum_{\vv\in\mathbb Z^2\cap Q_0}|u(\vv)|^q\right|&\,\leq\sum_{\ell=1}^m\left||u(v_\ell)|^q-|u(w_\ell)|^q\right|\\
	  &\,\leq qm\int_{Q_0}|u(x)|^{q-1}|u^\prime(x)|dx, 
	\end{split}
	 \]
	where $L$ is the diameter of $Q_0$. Clearly, these computations do not change if we replace $Q_0,V_0$ by $Q_0+i\vec{v}_1+j\vec{v}_2, V_0+i\vec{v}_1+j\vec{v}_2$, for every $(i,j)\in\Z^2$. Therefore, since Remark \ref{rem:Per_dec} ensures that $(Q_0+i\vec{v}_1+j\vec{v}_2)_{(i,j)\in\Z^2}, (V_0+i\vec{v}_1+j\vec{v}_2)_{(i,j)\in\Z^2}$ are disjoint partitions of $\G_1, V$ respectively, we have
	\[
	\begin{split}
	&\left|\frac{\#(\V_{\G_1}\cap Q_0)}{\#V_0}\sum_{\vv\in V}|u(\vv)|^q-\sum_{\vv\in\Z^2}|u(\vv)|^q\right|\\
	&\qquad=\left|\sum_{(i,j)\in\mathbb Z^2}\left(\frac{\#(\V_{\G_1}\cap Q_0)}{\#V_0}\sum_{\vv\in V_0+i\vec{v}_1+j\vec{v}_2}|u(\vv)|^q-\sum_{\vv\in \Z^2\cap(Q_0+i\vec{v}_1+j\vec{v}_2)}|u(\vv)|^q\right)\right|\\
	&\qquad\lesssim\sum_{(i,j)\in\mathbb Z^2}\int_{Q_0+i\vec{v}_1+j\vec{v}_2}|u(x)|^{q-1}|u'(x)|\,dx=\int_{\G_1}|u(x)|^{q-1}|u'(x)|\,dx\leq\|u\|_{2(q-1),1}^{q-1}\|u'\|_{2,1}\,,
	\end{split}
	\]
	and, combining with \eqref{eq:estZ2}, we conclude.
\end{proof}

A similar result holds true also for $\Z$-periodic sets of vertices.
\begin{lem}
	\label{l.GN-grid.Zper.}
	Let $q\geq2$, $\varepsilon>0$ and $V\subset \V_{\G_\varepsilon}$ be a $\Z$-periodic set according to Definition \ref{def:Z-per}. Then
	\[
	\varepsilon\sum_{\vv\in V}|u(\vv)|^q\lesssim\|u\|_{L^q(\G_\varepsilon')}^q+ \varepsilon\|u\|_{L^{2(q-1)}(\G_\varepsilon')}^{q-1}\|u'\|_{L^2(\G_\varepsilon')} \qquad\forall u\in H^1(\mathcal{G}_\varepsilon)\,,
	\]
	with $\G_\varepsilon'$ defined by \eqref{eq-Gprime}.
\end{lem}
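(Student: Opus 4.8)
The plan is to follow closely the scheme of the proof of Lemma \ref{l.GN-grid-i}, first reducing to the case $\varepsilon=1$ by a scaling argument and then exploiting the $\Z$-periodic decomposition of Remark \ref{rem:Per_dec}. For the reduction, I would recall that a $\Z$-periodic $V\subset\V_{\G_\varepsilon}$ satisfies $V=\varepsilon V'$ for some $\Z$-periodic $V'\subset\V_{\G_1}$, with associated strip $\G_\varepsilon'=\varepsilon\G_1'$. Setting $w(x):=\varepsilon^{1/q}u(\varepsilon x)\in H^1(\G_1)$, a direct computation of the scaling exponents gives $\sum_{\vv\in V'}|w(\vv)|^q=\varepsilon\sum_{\vv\in V}|u(\vv)|^q$, $\|w\|_{L^q(\G_1')}^q=\|u\|_{L^q(\G_\varepsilon')}^q$ and $\|w\|_{L^{2(q-1)}(\G_1')}^{q-1}\|w'\|_{L^2(\G_1')}=\varepsilon\|u\|_{L^{2(q-1)}(\G_\varepsilon')}^{q-1}\|u'\|_{L^2(\G_\varepsilon')}$, so that the $\varepsilon=1$ inequality applied to $w$ is exactly the claimed one for $u$. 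This exponent bookkeeping is routine, and I would present it briefly, exactly as in Lemma \ref{l.GN-grid-i}.

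It then remains to prove the estimate for $\varepsilon=1$. By Remark \ref{rem:Per_dec} there are a bounded set $Q_0\subset\G_1$ and $V_0:=Q_0\cap V$ such that $V=\bigcup_{i\in\Z}(V_0+i\vec{v})$ and $\G_1'=\bigcup_{i\in\Z}(Q_0+i\vec{v})$, both unions being disjoint. The core step is a one-edge estimate: since $Q_0$ is a finite union of sets $L^1_{i,j}$, every vertex $\vv\in V_0$ is an endpoint of at least one edge $e_\vv\subset Q_0$, and the fundamental theorem of calculus together with $|(|u|^q)'|=q|u|^{q-1}|u'|$ yields, for every $x\in e_\vv$, the bound $|u(\vv)|^q\le|u(x)|^q+q\int_{e_\vv}|u|^{q-1}|u'|$. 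Integrating in $x$ over the unit-length edge $e_\vv$ gives $|u(\vv)|^q\le\|u\|_{L^q(e_\vv)}^q+q\int_{e_\vv}|u|^{q-1}|u'|$, and summing over the finitely many $\vv\in V_0$ together with $e_\vv\subseteq Q_0$ produces
\[
\sum_{\vv\in V_0}|u(\vv)|^q\le\#V_0\Big(\|u\|_{L^q(Q_0)}^q+q\int_{Q_0}|u|^{q-1}|u'|\Big).
\]

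The same bound holds with $Q_0,V_0$ replaced by their translates $Q_0+i\vec{v},\,V_0+i\vec{v}$ by translation invariance. Summing over $i\in\Z$ and invoking the disjointness of the two decompositions from Remark \ref{rem:Per_dec}, the integrals over the cells reassemble into integrals over $\G_1'$, giving $\sum_{\vv\in V}|u(\vv)|^q\lesssim\|u\|_{L^q(\G_1')}^q+\int_{\G_1'}|u|^{q-1}|u'|$; the rearrangement of the series is justified because $u\in H^1(\G_\varepsilon)\hookrightarrow L^r$ for every $r\ge2$ by Lemma \ref{l.GN-eps.}, so all the norms involved are finite. A final application of the Cauchy--Schwarz inequality, $\int_{\G_1'}|u|^{q-1}|u'|\le\|u\|_{L^{2(q-1)}(\G_1')}^{q-1}\|u'\|_{L^2(\G_1')}$, closes the case $\varepsilon=1$ and hence the lemma. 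I expect the only genuinely delicate points to be purely combinatorial: checking that every vertex of $V_0$ really carries an edge inside $Q_0$, so that the one-edge estimate applies, and verifying that the constant emerging from the $\varepsilon=1$ case is independent of $\varepsilon$ and $u$ (it depends only on $q$ and $\#V_0=\#V_0'$). In contrast to the $\Z^2$-periodic Lemma \ref{l.GN-grid-i}, here one can only obtain a one-sided bound against the $L^q$ norm on the strip $\G_\varepsilon'$, since the lower-dimensional $\Z$-periodic set cannot reproduce the full $L^q$ mass of $u$.
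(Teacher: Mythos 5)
Your proposal is correct and follows essentially the same route as the paper: a scaling reduction to $\varepsilon=1$ via $w(x)=\varepsilon^{1/q}u(\varepsilon x)$, followed by the fundamental-theorem-of-calculus estimate on the cells of the $Q_0$/$V_0$ decomposition of $\G_1'$ from Remark \ref{rem:Per_dec} and Cauchy--Schwarz, which is exactly what the paper means when it says the claim ``follows the same computations in \eqref{eq:estZ2}''. Your two flagged worries are indeed harmless: every $\vv\in V_0$ is by construction the corner vertex of some cell $L_{i_n,j_n}^1\subseteq Q_0$, hence carries an edge inside $Q_0$, and the constant you obtain depends only on $q$ and $\#V_0$, hence not on $\varepsilon$ or $u$.
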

\begin{proof}
	Since every $\Z$-periodic set $V$ in $\G_\varepsilon$ can be written as $V=\varepsilon V'$ with $V'$ being $\Z$-periodic in $\G_1$, the result for $u\in H^1(\G_\varepsilon)$ directly follows by applying the lemma to $w(x)=\varepsilon^{1/q}u(\varepsilon x)\in H^1(\G_1)$. Hence, it is again enough to prove the claim when $\varepsilon=1$. In this case, the proof follows the same computations in \eqref{eq:estZ2} (but without a focus on the exact constants involved). 
\end{proof}
Since the previous lemma introduces a relation between $\Z$-periodic concentrated nonlinearities and Lebesgue norms on strip-like subsets of the grid, we conclude this section recalling the following estimate, a proof of which can be found in \cite[Lemma 2.3]{BDS23}.
\begin{lem}
\label{l.GN-grid.Zper.0}
	Let $\widetilde{\mathcal{G}}\subseteq\mathcal{G}_1$ be a subgraph with $\big|\widetilde{\mathcal{G}}\big|>0$ satisfying
 	\[
 	\min\left\{\sup_{j\in\mathbb Z}\#(\mathbb V_{\widetilde{\mathcal{G}}}\cap H_j),\,\sup_{j\in\mathbb Z}\#(\mathbb V_{\widetilde{\mathcal{G}}}\cap V_j)\right\}<+\infty, 
 	\]
	where $H_j, V_j$ are the straight lines $y=j$ and $x=j$, respectively. Then, for every $q>2$, there results
	\[
	\|u\|_{L^q(\widetilde{\mathcal{G}})}^q\lesssim \|u\|_{2,1}\|u^\prime\|_{2,1}^{q-1} \qquad \forall u\in H^1(\mathcal{G}_1).
	\]
\end{lem}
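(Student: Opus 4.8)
The plan is to reduce this two-dimensional estimate to a family of one-dimensional Gagliardo--Nirenberg inequalities along the lines of the grid, and then to sum these over the lines using the finiteness hypothesis. First, since the reflection of $\mathcal{G}_1$ exchanging the two coordinates maps each horizontal line $H_j$ onto a vertical line $V_j$ and leaves every norm in the statement invariant, I would assume without loss of generality that the finite quantity is the one along vertical lines, i.e. $N:=\sup_{j}\#(\mathbb V_{\widetilde{\mathcal{G}}}\cap V_j)<+\infty$. Writing $\widetilde{\mathcal{G}}$ as the union of its vertical edges $\widetilde{\mathcal{G}}_v$ and its horizontal edges $\widetilde{\mathcal{G}}_h$, I would treat the vertical part as the model case and reduce the horizontal one to it, up to lower-order terms controlled by the tangential derivative: the endpoints of each horizontal edge lie on two consecutive vertical lines, and by the same bound $N$ at most $N$ such edges leave each line.

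For the model (vertical) part the key point is that on each vertical line $V_j$ the set $V_j\cap\widetilde{\mathcal{G}}$ has length at most $N$, so that $\int_{V_j\cap\widetilde{\mathcal{G}}}|u|^q\le N\,\|u\|_{L^\infty(V_j)}^q$. The one-dimensional content then enters through estimates for the line maximum $\|u\|_{L^\infty(V_j)}$, valid because $u\in H^1(\mathcal{G}_1)$ vanishes at infinity along $V_j$: the Agmon-type bound $\|u\|_{L^\infty(V_j)}^2\le 2\|u\|_{L^2(V_j)}\|\partial_y u\|_{L^2(V_j)}$ and the fundamental-theorem-of-calculus bound $\|u\|_{L^\infty(V_j)}\le\tfrac12\|\partial_y u\|_{L^1(V_j)}$. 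Setting $a_j:=\|u\|_{L^2(V_j)}^2$ and $b_j:=\|\partial_y u\|_{L^2(V_j)}^2$, the decisive conservation relations $\sum_j a_j\le\|u\|_{2,1}^2$ and $\sum_j b_j\le\|u'\|_{2,1}^2$ hold, since the vertical lines partition the vertical edges of the grid. The goal is then the sequence inequality $\sum_j\|u\|_{L^\infty(V_j)}^q\lesssim\|u\|_{2,1}\,\|u'\|_{2,1}^{q-1}$, together with the analogous treatment of $\widetilde{\mathcal{G}}_h$.

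The hard part will be precisely this last summation, because the target exponents $(1,q-1)$ are strongly gradient-dominated. Feeding the \emph{symmetric} Agmon estimate into any Hölder/Cauchy--Schwarz summation against the two conservation relations only returns the \emph{balanced} bound $(\|u\|_{2,1}\|u'\|_{2,1})^{q/2}$, which is strictly weaker than the claim in the ``spread-out'' regime $\|u\|_{2,1}>\|u'\|_{2,1}$; conversely, the claim is essentially trivial when $\|u'\|_{2,1}\ge\|u\|_{2,1}$, since then $\|u\|_{2,1}\|u'\|_{2,1}^{q-1}\ge(\|u\|_{2,1}\|u'\|_{2,1})^{q/2}$. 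Thus the genuine work is confined to the gradient-scarce regime, where one must exploit that forcing $L^q$-mass onto a thin subgraph necessarily produces a comparable \emph{transverse} gradient; this is what lets the full-grid norm $\|u'\|_{2,1}$ (larger than the subgraph's own gradient) be legitimately charged to the high power $q-1$, while the finite multiplicity $N$ prevents the lines from overcounting. I expect the delicate step to be combining the two one-dimensional bounds above into a single \emph{asymmetric} per-line estimate that releases one factor of $\|u\|_{L^2(V_j)}$ and $q-1$ factors of the derivative uniformly in $q>2$; the sharpness of the final inequality for a single-edge bump, where all three quantities are comparable, provides the natural consistency check, and the subtlety of this balancing is presumably why the estimate is imported from \cite{BDS23} rather than reproved here.
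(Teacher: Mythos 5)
Your reductions are correct as far as they go: the symmetry argument for the choice of thin direction, the bound $\int_{V_j\cap\widetilde{\mathcal{G}}}|u|^q\le N\|u\|_{L^\infty(V_j)}^q$, the conservation relations for $a_j:=\|u\|_{L^2(V_j)}^2$ and $b_j:=\|\partial_y u\|_{L^2(V_j)}^2$, and the observation that per-line Agmon plus summation (using $\ell^1\hookrightarrow\ell^{q/2}$) gives $\sum_j\|u\|_{L^\infty(V_j)}^q\le\bigl(\sum_j(a_jb_j)^{1/2}\bigr)^{q/2}\le(\|u\|_{2,1}\|u'\|_{2,1})^{q/2}$, which implies the claim exactly when $\|u'\|_{2,1}\ge\|u\|_{2,1}$. (Two smaller remarks: the horizontal-edge remainder $\int_{\G_1}|u|^{q-1}|u'|\,dx$ must be estimated via the two-dimensional inequality \eqref{GN-eps.2} with exponent $2(q-1)$, since the one-dimensional \eqref{GN-eps.1} again produces only the balanced bound; and the bound $\|u\|_{L^\infty(V_j)}\le\frac12\|\partial_y u\|_{L^1(V_j)}$ is vacuous here, as $\partial_y u$ is merely $L^2$ on an infinite line.) But in the regime $\|u\|_{2,1}>\|u'\|_{2,1}$ you prove nothing: you name the mechanism and conjecture an ``asymmetric per-line estimate''. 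Since, as you yourself show, everything outside that regime is routine, the unproved step is not a detail --- it is the entire content of the lemma. Note also that the paper does not prove this lemma either (it cites \cite[Lemma 2.3]{BDS23}), so your deferral mirrors the paper's citation; but judged as a proof, your text is a correct skeleton with the core missing.

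Moreover, the route you anticipate provably cannot work in the form you describe. No per-line inequality of the type $\|u\|_{L^\infty(V_j)}^q\lesssim a_j^{1/2}b_j^{(q-1)/2}$ (one mass factor and $q-1$ derivative factors, released line by line) can hold: if the restriction of $u$ to $V_j$ is a tent of height $h$ and width $L\gg1$ (which occurs for genuine $u\in H^1(\G_1)$, e.g.\ a wide two-dimensional tent), the left side is $h^q$ while the right side is $h^qL^{(2-q)/2}\to0$. The gain in the spread-out regime is invisible on any single line; it must come from coupling the lines. For instance, setting $J(t):=\{j:\|u\|_{L^\infty(V_j)}>t\}$, per-line Agmon gives $b_j>t^4/a_j$ on $J(t)$, and Cauchy--Schwarz \emph{across} lines gives $\#J(t)\le\|u\|_{2,1}\|u'\|_{2,1}/t^2$; a second use of Agmon in the transverse direction (through each vertex of $V_j$ along its horizontal line) gives the line-mass bound $a_j\lesssim\|u\|_{2,1}\|u'\|_{2,1}+b_j$, whence the refined count $\#J(t)\lesssim\|u\|_{2,1}\|u'\|_{2,1}^3/t^4$ when $\|u\|_{2,1}\ge\|u'\|_{2,1}$; then writing $\sum_j\|u\|_{L^\infty(V_j)}^q=q\int_0^\infty t^{q-1}\,\#J(t)\,dt$ and using the first count for $t\le\|u'\|_{2,1}$ and the second for $t>\|u'\|_{2,1}$ produces exactly $\|u\|_{2,1}\|u'\|_{2,1}^{q-1}$ for $2<q<4$ (larger $q$ requires further refinement). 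Some cross-line argument of this kind --- this one, or the one of \cite{BDS23} --- is indispensable, and its absence is a genuine gap, not an omitted verification.
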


\section{Existence of ground states on grids: proof of Theorems \ref{thm:GScomp}--\ref{thm:GSZ}--\ref{thm:GSZ2}}
\label{sec:exgsG}
Here we prove our main existence/non-existence results for ground states on two-dimensional grids with finitely many (Theorem \ref{thm:GScomp}), $\Z$-periodic (Theorem \ref{thm:GSZ}) and $\Z^2$-periodic (Theorem \ref{thm:GSZ2}) concentrated nonlinearities. 

We give the details of the proofs in the case $\varepsilon=\alpha=\beta=1$, since different values of the parameters affect only the actual values of the thresholds $\overline\mu$ in Theorems \ref{thm:GScomp}--\ref{thm:GSZ}. Hence, all along this section the symbols $E(u,\G),\, \E_{\G}(\mu)$ will denote the quantities in \eqref{eq:EG}, \eqref{eq:levEG} with $\alpha=\beta=1$ on $\G=\G_1$, and $\|\cdot\|_r$ will denote the standard Lebesgue norm in $L^r(\G)$.

\begin{rem}
\label{rem:l.cont.}
Note that, given $p\in(2,6)$, $q\in(2,4)$ and $V\subseteq\V_\G$, the function $\mu\mapsto \E_\G(\mu)$ is non-positive and continuous on $\mu\in[0,\infty)$ . Indeed, in view of Lemma \ref{l.GN-eps.}, the fact that $\E_\G(\mu)\leq 0$ can be easily seen taking any sequence $(u_n)_n\subset H_\mu^1(\G)$ with $\|u_n'\|_2\to0$ as $n\to+\infty$. On the other hand, since 
\[
\mathcal{E}_{\G}(\mu)=\inf_{v\in H^1_\mu(\mathcal{G})}E(v,\G)=\inf_{u\in H^1_1(\mathcal{G})}E(\sqrt{\mu}u,\G)\,,
\]
and since for every $u\in H_1^1(\G)$ the quantity
\[ E(\sqrt{\mu}u,\G)=\frac{\mu}{2}\|u^\prime\|_2^2-\frac{\mu^{\frac{p}{2}}}{p}\|u\|_p^p-\frac{\mu^{\frac{q}{2}}}{q}\sum_{\vv\in V}|u(\vv)|^q
\]
is a concave function of $\mu\in[0,\infty)$, then $\mathcal{E}_{\G}(\mu)$ is concave too and, thus, continuous on $(0,+\infty)$. However, as $\mathcal{E}_{\G}(0)=0$ and $\E_\G(\mu)\leq 0$, for every $\mu\in[0,+\infty)$, it is straightforward that $\mathcal{E}_{\G}(\mu)$ is continuous on $[0,+\infty)$.
\end{rem}

The next lemma provides a sufficient condition for existence of ground states.

\begin{lem}
\label{l.ex.crit.}
Let $p\in(2,6)$, $q\in(2,4)$, $\mu>0$ and $V\subseteq\V_\G$ be either bounded, $\mathbb Z$-periodic or $\mathbb Z^2$-periodic. If
\begin{equation}
\label{eq:E<0}
\mathcal{E}_{\G}(\mu)<0,
\end{equation}
then a ground state of $E(\cdot,\G)$ in $H_\mu^1(\G)$ exists.
\end{lem}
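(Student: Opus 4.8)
The plan is to establish existence via the standard concentration-compactness / direct method scheme, with the crucial assumption \eqref{eq:E<0} used precisely to rule out the loss of mass at infinity. First I would take a minimizing sequence $(u_n)_n\subset H_\mu^1(\G)$ for $\E_\G(\mu)$, i.e. with $E(u_n,\G)\to\E_\G(\mu)<0$. The first step is to show that $(u_n)_n$ is bounded in $H^1(\G)$. The mass constraint already controls $\|u_n\|_2$, so one only needs a uniform bound on $\|u_n'\|_2$: combining the Gagliardo–Nirenberg inequalities of Lemma \ref{l.GN-eps.} (for the $L^p$ term) with Lemmas \ref{l.GN-grid-i}, \ref{l.GN-grid.Zper.}, \ref{l.GN-grid.Zper.0} (for the concentrated $L^q$ term, depending on whether $V$ is $\Z^2$-periodic, $\Z$-periodic, or bounded) shows that the negative terms in $E$ grow strictly slower than $\tfrac12\|u_n'\|_2^2$ in the subcritical regime $p<6$, $q<4$; hence $E(u_n,\G)\to\E_\G(\mu)$ being finite forces $\|u_n'\|_2$ to stay bounded. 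Up to a subsequence, $u_n\rightharpoonup u$ weakly in $H^1(\G)$.

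Next I would run a dichotomy argument on the mass $\mu$. Since $\G$ is noncompact and, in the periodic cases, $V$ is unbounded, the sequence may vanish, split, or concentrate. Vanishing is excluded because $\E_\G(\mu)<0$ forces at least one of the nonlinear terms to stay bounded away from zero along the sequence, which by the Gagliardo–Nirenberg estimates prevents $\|u_n'\|_2\to0$ and keeps some definite $L^\infty$ (or $L^p$) mass. The key inequality is the strict subadditivity of the level, $\E_\G(\mu)<\E_\G(\mu_1)+\E_\G(\mu-\mu_1)$ for $0<\mu_1<\mu$, which I would deduce from the fact that $\mu\mapsto\E_\G(\mu)$ is concave (Remark \ref{rem:l.cont.}) together with the strict negativity in \eqref{eq:E<0}: concavity gives $\E_\G(\mu)\le\E_\G(\mu_1)+\E_\G(\mu-\mu_1)$, and because $\E_\G(\mu)<0$ one can upgrade this to a strict inequality (e.g.\ by rescaling a near-optimal competitor to absorb all the mass and strictly lower the energy, exploiting that concentrating mass makes the negative nonlinear terms more efficient). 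This strict subadditivity rules out dichotomy. Thus compactness holds, and $u_n\to u$ strongly in the relevant norms, so that $E(u,\G)=\E_\G(\mu)$ with $\|u\|_2^2=\mu$, i.e.\ $u$ is the desired ground state.

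The main obstacle I expect is excluding the loss of mass at infinity in the periodic cases (bounded $V$ is easier, since translating mass away from $V$ only removes the favourable $L^q$ term). When $V$ is $\Z$- or $\Z^2$-periodic, the concentrated nonlinearity is itself noncompact and translation-invariant along the periodicity vectors, so a minimizing sequence could drift to infinity along $V$ without losing energy. The remedy is to exploit the periodic structure via the decompositions of Remark \ref{rem:Per_dec}: one applies discrete translations along $\vec{v}$ (or $\vec{v}_1,\vec{v}_2$) to recenter the sequence so that a fixed fraction of the relevant norm lives in a compact portion of $\G$, and then the local compact Sobolev embedding on that portion yields nontrivial weak convergence. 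The subadditivity inequality must then be handled with care to absorb this translation invariance, and combining the three different Gagliardo–Nirenberg estimates according to the nature of $V$ is exactly what makes the argument uniform across all three cases. This is where the bulk of the technical work will lie, while the passage from strict negativity of the level to strict subadditivity is the conceptual heart of the proof.
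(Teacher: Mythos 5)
Your plan is structurally sound in two of its three steps, and there it parallels the paper: the $H^1$ bound is obtained the same way, and your strict subadditivity (concavity of $\mu\mapsto\E_\G(\mu)$ plus \eqref{eq:E<0} plus rescaling) plays exactly the role of the paper's argument, where the Brezis--Lieb splitting $E(u_n,\G)=E(u,\G)+E(u_n-u,\G)+o(1)$ and the scaling inequality $E(v,\G)>\tfrac{\|v\|_2^2}{\mu}\E_\G(\mu)$ rule out a weak limit with $\|u\|_2^2\in(0,\mu)$; likewise, your recentering along the periodicity vectors is exactly what the paper does in the $\Z^2$-periodic case, normalizing $\|u_n\|_{\infty}=\|u_n\|_{L^\infty(Q_0)}$ before passing to the limit.

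The genuine gap is in the remaining alternative: the minimizing sequence may keep a definite $L^p$ mass but drift entirely away from $V$. This can happen when $V$ is bounded (no symmetry is available to recenter) and also when $V$ is $\Z$-periodic (after recentering along $\vec{v}$, mass can still escape transversally to the strip containing $V$). In that scenario the weak limit is zero and the concentrated term vanishes along the sequence while the $L^p$ term survives; so this is neither vanishing in Lions' sense (your negativity argument does not apply, since one nonlinear term indeed stays bounded away from zero) nor dichotomy (strict subadditivity of $\E_\G$ gives nothing here, because the problem seen by the escaping mass is not $E(\cdot,\G)$ but the pure functional $\overline{E}_p(v,\G):=\tfrac12\|v'\|_2^2-\tfrac1p\|v\|_p^p$, whose ground state level is $\geq\E_\G(\mu)$ trivially, so no contradiction arises). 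What is needed is a \emph{strict} comparison with this problem at infinity, and this is where the paper imports a nontrivial external ingredient, namely \cite[Theorems 1.1-1.2]{ADST19}: in the escape scenario one gets $\E_\G(\mu)\geq\inf_{v\in H_\mu^1(\G)}\overline{E}_p(v,\G)$; if this infimum is zero this contradicts \eqref{eq:E<0}, while if it is negative it is attained by some $u>0$ on $\G$, whence $\E_\G(\mu)\geq\overline{E}_p(u,\G)>E(u,\G)\geq\E_\G(\mu)$ because $\sum_{\vv\in V}|u(\vv)|^q>0$. Your heuristic that translating mass away from $V$ ``only removes the favourable $L^q$ term'' is correct, but turning it into a proof requires precisely this attainment-plus-positivity result for the pure problem (or an equivalent re-derivation of it by tracking the translated limit of the escaping sequence), and nothing in your proposal supplies it. A secondary, easily fixed omission: in the $\Z$-periodic case, passing from $\|u_n\|_{L^\infty(V)}\to0$ to $\sum_{\vv\in V}|u_n(\vv)|^q\to0$ uses $q>2$ together with the uniform bound on $\sum_{\vv\in V}|u_n(\vv)|^2$ provided by Lemma \ref{l.GN-grid.Zper.}.
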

\begin{proof}
	Let $(u_n)_n\subset H_\mu^1(\G)$ be such that $E(u_n,\G)\to\E_\G(\mu)$ as $n\to+\infty$. Since $p\in(2,6)$ and $q\in(2,4)$, by Lemma \ref{l.GN-eps.} it is easily seen that $(u_n)_n$ is bounded in $H^1(\G)$, so that up to subsequences $u_n\rightharpoonup u$ in $H^1(\G)$ and $u_n\to u$ in $L^\infty_{loc}(\G)$ as $n\to+\infty$. By weak lower semicontinuity, $0\leq\|u\|_2^2\le \mu$. Note that, if $\|u\|_2^2=\mu$, then $u\in H_\mu^1(\G)$ and the convergence of $u_n$ to $u$ is strong in $L^2(\G)$, and thus in $L^r(\G)$ for every $r\in[2,\infty]$ again by Lemma \ref{l.GN-eps.}. Moreover, $\displaystyle\sum_{\vv\in V}|u_n(\vv)|^q\to\sum_{\vv\in V}|u(\vv)|^q$ too. This is evident if $V$ is bounded, whereas it follows e.g. by Lemmas \ref{l.GN-grid-i}--\ref{l.GN-grid.Zper.} when $V$ is either $\Z^2$-periodic or $\Z$-periodic, respectively. Therefore, again by lower semicontinuity, we have
	\[
	\E_\G(\mu)=\lim_{n\to+\infty}E(u_n,\G)\geq E(u,\G)\geq \E_\G(\mu)\,,
	\]
	i.e. $u$ is a ground state. Hence, to complete the proof it is left to prove that $\|u\|_2^2\not\in[0,\mu)$.
	
	Assume, first, by contradiction that $\|u\|_2^2\in(0,\mu)$, and observe that
	\begin{equation}
	\label{eq:BL}
	E(u_n,\G)=E(u,\G) + E(u_n-u,\G) + o(1)\qquad\text{as}\quad n\to+\infty. 
	\end{equation}
	Indeed,	by direct computations we have
	\[
	\|u_n'-u'\|_2^2=\|u_n'\|_2^2 + \|u'\|_2^2 - 2\int_\G u_n'u'\,dx= \|u_n'\|_2^2-\|u'\|_2^2 + o(1)\,,
	\]
	whereas the relations
	\[
	\begin{split}
	\|u_n-u\|_p^p&\,=\|u_n\|_p^p-\|u\|_p^p + o(1)\\[.2cm]
	\sum_{\vv\in V}|u_n(\vv)-u(\vv)|^q&\,=\sum_{\vv\in V}|u_n(\vv)|^q - \sum_{\vv\in V}|u(\vv)|^q + o(1)
	\end{split}
	\]
	directly follow by Brezis-Lieb Lemma \cite{BL83}.  Now, since both $u\not\equiv0$ and $u_n-u\not\equiv0$ by assumption, making use of $p,q>2$ we obtain
	\[
	\begin{split}
	\mathcal{E}_\G(\mu)&\,\le E\left(\sqrt{\frac{\mu}{\|u\|_2^2}}u,\G\right)\\
	&\,= \frac{1}{2}\frac{\mu}{\|u\|_2^2}\|u'\|_2^2 - \frac{1}{p}\left(\frac{\mu}{\|u\|_2^2}\right)^{p/2}\|u\|_p^p - \frac{1}{q}\left(\frac{\mu}{\|u\|_2^2}\right)^{q/2}\sum_{\vv\in V}|u(\vv)|^q < \frac{\mu}{\|u\|_2^2}E(u,\G),
	\end{split}
	\]
	that is 
	\begin{equation}\label{proof.dic.--.1}
	E(u,\G)> \frac{\|u\|_2^2}{\mu}\mathcal{E}_{\G}(\mu)\,,
	\end{equation}
	and similarly
	\begin{equation}\label{proof.dic.--.2}
	\liminf_{n\to+\infty}E(u_n-u,\G)\ge \liminf_{n\to+\infty}\frac{\|u_n-u\|_2^2}{\mu}\mathcal{E}_\G(\mu)=\frac{\mu-\|u\|_2^2}{\mu}\E_\G(\mu)\,,
	\end{equation}
	where we used that $\|u_n-u\|_2^2= \|u_n\|_2^2-\|u\|_2^2 + o(1)$ as $n\to+\infty$. Hence, combining \eqref{eq:BL}, \eqref{proof.dic.--.1} and \eqref{proof.dic.--.2} yields
	\[
	\mathcal{E}_{\G}(\mu)=\lim_{n\to+\infty}E(u_n,\G)\geq \liminf_{n\to+\infty} E(u_n-u,\G) + E(u,\G)>\mathcal{E}_{\G}(\mu), 
	\]
	that is a contradiction.
	
	It thus remains to exclude that $\|u\|_2=0$, i.e. $u\equiv0$ on $\G$. Even though the basic idea is always the same, this step is slightly different depending on $V$ being bounded, $\Z$-periodic or $\Z^2$-periodic. For this reason, we now discuss independently each of these cases.
	
	\emph{Case (i): $V$ is $\Z^2$-periodic.} Since in this setting $E(\cdot,\G)$ is invariant under discrete translations according to the periodicity of $V$, with no loss of generality we can assume that each $u_n$ satisfies $\|u_n\|_\infty=\|u_n\|_{L^\infty(Q_0)}$, where $Q_0$ is the set associated to $V$ as in Remark \ref{rem:Per_dec}. As a consequence, $u_n\to0$ strongly in $L^\infty(\G)$ and thus also in $L^r(\G)$ for every $r>2$ as $n\to+\infty$. By Lemma \ref{l.GN-grid-i}, this implies $\displaystyle\sum_{\vv\in V}|u_n(\vv)|^q\to0$ too, in turn entailing
	\[
	\E_\G(\mu)=\lim_{n\to+\infty}E(u_n,\G)\geq\liminf_{n\to+\infty}\frac12\|u_n'\|_2^2\geq0\,.
	\]
	Since this contradicts \eqref{eq:E<0}, the lemma is proved when $V$ is $\Z^2$-periodic.
	
	\emph{Cases (ii)$\&$(iii): $V$ bounded or $\Z$-periodic.} Let then $V$ be either bounded or $\Z$-periodic. Observe that, since $u_n\rightharpoonup 0$ in $H^1(\G)$ as $n\to+\infty$ by assumption, in both cases we can further assume with no loss of generality that
	\begin{equation}
	\label{eq:Sqto0}
	\sum_{\vv\in V}|u_n(\vv)|^q\to0\qquad\text{as}\quad n\to+\infty\,.
	\end{equation}
	When $V$ is bounded, it is a direct consequence of the convergence in $L^\infty_{loc}(\G)$ of $u_n$ to 0. Conversely, when $V$ is $\Z$-periodic, since $E(\cdot,\G)$ is invariant under discrete translations according to the periodicity of $V$, we can assume without loss of generality that $\|u_n\|_{L^\infty(V)}=\|u_n\|_{L^\infty(V_0)}$ for every $n$, where $V_0$ is the set associated to $V$ as in Remark \ref{rem:Per_dec}. Then the local convergence to 0 of $u_n$ implies $\|u_n\|_{L^\infty(V)}\to0$ and, since $q>2$, estimating $|u_n(\vv)|^q\leq\|u_n\|_{L^\infty(V)}^{q-2}|u_n(\vv)|^2$ for every $\vv\in V$ and making use of Lemma \ref{l.GN-grid.Zper.} and of the boundeness of $(u_n)_n$ in $H^1(\G)$ to show that $\sum_{\vv\in V}|u_n(\vv)|^2$ is uniformly bounded, one recovers \eqref{eq:Sqto0}.
	
	By \eqref{eq:Sqto0}, we then have
	\begin{equation}
	\label{eq:E>Einf}
	\E_\G(\mu)=\lim_{n\to+\infty}E(u_n,\G)\geq\liminf_{n\to+\infty}\left(\frac12\|u_n'\|_2^2-\frac1p\|u_n\|_p^p\right)\geq\inf_{v\in H_\mu^1(\G)}\left(\frac12\|v'\|_2^2-\frac1p\|v\|_p^p\right).
	\end{equation}
	However, it has been proved in \cite[Theorems 1.1-1.2]{ADST19} that, for every $p\in(2,6)$, 
	\begin{equation}
		\label{eq-ADST}
	\inf_{v\in H_\mu^1(\G)}\left(\frac12\|v'\|_2^2-\frac1p\|v\|_p^p\right)\leq0\qquad\forall \mu>0
	\end{equation}
	and, for those values of $\mu>0$ for which it is strictly negative, there exists $u\in H_\mu^1(\G)$ such that
	\[
	\inf_{v\in H_\mu^1(\G)}\left(\frac12\|v'\|_2^2-\frac1p\|v\|_p^p\right)=\frac12\|u'\|_2^2-\frac1p\|u\|_p^p\,.
	\]
	Observe that, once any of such $u$ exists, it is easy to see that (up to a change of sign) $u>0$ on $\G$, so that by \eqref{eq:E>Einf}
	\[
	\E_\G(\mu)\geq\inf_{v\in H_\mu^1(\G)}\left(\frac12\|v'\|_2^2-\frac1p\|v\|_p^p\right)>\frac12\|u'\|_2^2-\frac1p\|u\|_p^p-\frac1q\sum_{\vv\in V}|u(\vv)|^q=E(u,\G)\geq\E_\G(\mu),
	\]
	which is a contradiction. On the other hand, whenever the equality holds in \eqref{eq-ADST}, one gets $\E_\G(\mu)\geq0$, which contradicts \eqref{eq:E<0} and concludes the proof.
\end{proof}

Now we can prove the main results on the existence of ground states on grids.

\begin{proof}[Proof of Theorem \ref{thm:GSZ2}]
	Let $p\in(2,6)$, $q\in(2,4)$ and $V\subseteq\V_\G$ be $\Z^2$-periodic. By \cite[Theorem 1.8]{BDS23}, for every $\mu>0$ there exists $u\in H_\mu^1(\G)$ such that
	\[
	\frac12\|u'\|_2^2-\frac1q\sum_{\vv\in V}|u(\vv)|^q<0\,.
	\]
	Since this automatically implies that $\E_\G(\mu)\leq E(u,\G)<0$, we conclude with a direct application of Lemma \ref{l.ex.crit.}.
\end{proof}

\begin{proof}[Proof of Theorem \ref{thm:GScomp}]
	Let $p\in(2,6)$, $q\in(2,4)$ and $V\subseteq\V_\G$ be bounded. By Lemma \ref{l.ex.crit.}, to show that $\E_\G(\mu)$ is attained it is enough to show that it is strictly negative. Set then 
	\[
	\overline\mu:=\inf\left\{\mu>0\,:\,\E_\G(\mu)<0\right\}\,.
	\]
	Note that, by \cite[Theorem 1.3]{BDS23}, if $\mu$ is sufficiently large there exists $u\in H_\mu^1(\G)$ such that 
	\[
	\frac12\|u'\|_2^2-\frac1q\sum_{\vv\in  V}|u(\vv)|^q<0\,,
	\]
	in turn yielding $\E_\G(\mu)<0$ for $\mu$ large enough. Hence, $\overline\mu<+\infty$. Moreover, if $\mu>0$ is such that $\E_\G(\mu)<0$, then by Lemma \ref{l.ex.crit.} there exists $u\in H_\mu^1(\G)$ such that $E(u,\G)=\E_\G(\mu)$. For every $\mu'>\mu$ we then have
	\begin{equation}
	\label{eq:scalmu}
	\E_\G(\mu')\leq E\left(\sqrt{\frac{\mu'}{\mu}}u,\G\right)=\frac12\frac{\mu'}{\mu}\|u'\|_2^2-\frac1p\left(\frac{\mu'}{\mu}\right)^\frac p2-\frac1q\left(\frac{\mu'}{\mu}\right)^\frac q2\sum_{\vv\in V}|u(\vv)|^q\leq\frac{\mu'}{\mu}E(u,\G)<0\,.
	\end{equation}
	This shows that $\E_\G(\mu)<0$ and, by Lemma \ref{l.ex.crit.} again, that ground states exist for every $\mu>\overline\mu$. Conversely, by definition of $\overline\mu$ and Remark \ref{rem:l.cont.} we immediately have that $\E_\G(\mu)=0$ for every $\mu\in[0,\overline\mu]$. Furthermore, if $\overline\mu>0$, then $\E_\G(\mu)$ is not attained whenever $\mu<\overline\mu$. Indeed, if by contradiction there exists $\mu'\in(0,\overline\mu)$ such that $\E_\G(\mu')$ is attained, then there exists $u\in H_{\mu'}^1(\G)$ such that $E(u,\G)=\E_\G(\mu')=0$. Taking then $\mu''\in(\mu',\overline\mu)$ and repeating the computations in \eqref{eq:scalmu} would yield $\E_\G(\mu'')<0$, which is impossible by definition of $\overline\mu$. 
	
	When $p\in(2,4)$ \cite[Theorem 1.1]{ADST19} claims that, for every $\mu>0$ there exists $u\in H_\mu^1(\G)$ such that
	\[
	\frac12\|u'\|_2^2-\frac1p\|u\|_p^p<0.
	\]
	Since this automatically implies that $\E_\G(\mu)\leq E(u,\G)<0$, we conclude once again by Lemma \ref{l.ex.crit.}.
	
	Therefore, to conclude the proof of Theorem \ref{thm:GScomp} it suffices to show that $\overline\mu>0$  whenever $p\in[4,6)]$. To this end, note that \eqref{GN-eps-interd.} entails
	\[
	E(u,\G) \geq\left(\frac{1}{2} - C_p\mu^\frac{p-2}{2}\right)\|u'\|_2^2 - \frac{1}{q}\sum_{\vv\in V}|u(\vv)|^q \ge  \left(\frac{1}{2} - C_p\mu^\frac{p-2}{2}\right)\|u'\|_2^2 - \frac{\#V}{q}\max_{\vv\in V}|u(\vv)|^q
	\]
	for every $u\in H^1(\G)$ and for a suitable constant $C_p>0$ depending only on $p$. When $\mu$ is sufficiently small, since $u$ cannot be constant, this gives
	\[
	E(u,\G) > \frac14\|u'\|_2^2-\frac{\#V}{q}\max_{\vv\in V}|u(\vv)|^q\geq\frac12\inf_{v\in H_\mu^1(\G)}\left(\frac12\|v'\|_2^2-\frac{2\#V}{q}|v(\overline{\vv})|^q\right)=0,
	\]
	with $\overline{\vv}:=\arg\max_{\vv\in V}|u(\vv)|^q$, where the last inequality is a consequence of \cite[Theorem 1.3]{BDS23} (see Section 5 therein for technical details). Hence, $E(u,\G)>0$ for every $u\in H_\mu^1(\G)$ with $\mu$ small enough, whence $\overline\mu>0$.
\end{proof}

\begin{proof}[Proof of Theorem \ref{thm:GSZ}]
	The line of the proof is almost identical to that of Theorem \ref{thm:GScomp}. Letting $p\in(2,6)$, $q\in(2,4)$ and $V\subset\V_\G$ be $\Z$-periodic, define again 
	\[
	\overline\mu:=\inf\left\{\mu>0\,:\,\E_\G(\mu)<0\right\}.
	\]
	Arguing exactly as before and recalling \cite[Theorem 1.7]{BDS23} when required, we obtain again that $\overline\mu<+\infty$ and that $\E_\G(\mu)$ is attained for every $\mu>\overline\mu$, whereas it is identically zero and not attained for $\mu<\overline\mu$, provided $\overline\mu>0$. Moreover, since in \cite[Theorem 1.7]{BDS23} it has been shown that for every $q\in(2,3)$ and every $\mu>0$ there exists $u\in H_\mu^1(\G)$ such that
	\[
	\frac12\|u'\|_2^2-\frac1q\sum_{\vv\in V}|u(\vv)|^q<0\,,
	\]
	this gives $\E_\G(\mu)\leq E(u,\G)<0$, that is $\overline\mu=0$ whenever $q\in(2,3)$, whence ground states exist for every $\mu>0$ (again by Lemma \ref{l.ex.crit.}). Analogously, since \cite[Theorem 1.1]{ADST19} ensures that, for every $p\in(2,4)$ and every $\mu>0$ there exists $u\in H_\mu^1(\G)$ such that $u>0$ on $\G$ and
	\[
	\frac12\|u'\|_2^2-\frac1p\|u\|_p^p<0\,,
	\]
	this entails again $\E_\G(\mu)\leq E(u,\G)<0$ for every $\mu>0$, leading again to the same result.
	
	To conclude, let then $p\in[4,6)$ and $q\in[3,4)$ and let us show that $\overline\mu>0$. To do this, it is enough to prove that $E(u,\G)>0$ for every $u\in H_\mu^1(\G)$ as soon as $\mu$ is sufficiently small.  
	
	Consider first the case $q=3$. Given $u\in H_\mu^1(\G)$, by Lemma \ref{l.GN-grid.Zper.}  and Young's inequality we have
	$$ 
	\frac1q\sum_{\vv\in V}|u(\vv)|^3\le C\left(\|u\|_{L^3(\mathcal{G}')}^3 + \|u\|_{L^4(\mathcal{G}^\prime)}^{2}\|u'\|_{L^2(\mathcal{G}^\prime)}\right)\le C\|u\|_{L^3(\mathcal{G}^\prime)}^3 +  \frac{1}{4}\|u'\|_{L^2(\mathcal{G}^\prime)}^2 + M\|u\|_{L^4(\mathcal{G}^\prime)}^{4}\,,
	$$
	where $\G'$ is the set associated to $V$ as in Lemma \ref{l.GN-grid.Zper.}, for suitable constants $C,M>0$ depending only on $q$ and $V$.
	Moreover, by Lemma \ref{l.GN-grid.Zper.0} and \eqref{GN-eps-interd.},
	$$
	 \|u\|_{L^3(\mathcal{G}')}^3\lesssim \mu^{1/2}\|u'\|_{2}^2,\qquad  \|u\|_{p}^p\lesssim \mu^\frac{p-2}{2}\|u'\|_{2}^2, \qquad \|u\|_{L^4(\mathcal{G}^\prime)}^4\leq\|u\|_{4}^4\lesssim \mu\|u'\|_{2}^2.
	$$
	Therefore, for $\mu>0$ sufficiently small we obtain (since $p\geq4$) 
	$$ 
	E(u,\G)\ge \left(\frac{1}{4}-C\mu^{1/2}\right)\|u'\|_{2,1}^2>0 \qquad \forall u\in H^1_\mu(\mathcal{G})\,, $$
	showing that $\overline\mu>0$ for every $p\in[4,6)$ and $q=3$.
	
	To recover the same result for every $q\in(3,4)$, assume now by contradiction that there exists $q\in(3,4)$ such that $\overline\mu=0$. By definition, this implies
	$$ 
	\mathcal{E}_\G(\mu)<0 \qquad \forall \mu>0,$$
	so that, by Lemma \ref{l.ex.crit.}, there exists $u_\mu\in H_\mu^1(\G)$ with $\mathcal{E}_\G(\mu)=E(u_\mu,\G)$ for every $\mu>0$. Since $\E_\G(\mu)$ is uniformly bounded in a neighbourhood of the origin, $(u_\mu)_\mu$ is uniformly bounded in $H^1(\mathcal{G})$ for $\mu>0$ sufficiently small. In particular, by Lemma \ref{l.GN-eps.}
	$$
	\|u_\mu\|_\infty\to 0\qquad \text{as}\quad\mu\to 0,
	$$
	and thus
	$$ 
	\frac1q\sum_{\vv\in V}|u_\mu(\vv)|^q\le \frac{1}{3}\sum_{\vv\in V}|u_\mu(\vv)|^3,
	$$
	yielding in turn 
	\[
	\frac12\|u_\mu'\|_2^2-\frac1p\|u_\mu\|_p^p-\frac13\sum_{\vv\in V}|u_\mu(\vv)|^3\leq \frac12\|u_\mu'\|_2^2-\frac1p\|u_\mu\|_p^p-\frac1q\sum_{\vv\in V}|u_\mu(\vv)|^q<0
	\]
	as soon as $\mu$ is small enough. However, this is prevented by the fact that $\overline\mu>0$ when $p\in[4,6)$ and $q=3$, proved before.
\end{proof}

\section{Ground states in $\R^2$ with standard and concentrated nonlinearities}
\label{sec:R2}
In this section we discuss existence results and basic properties for the doubly nonlinear problems in $\R^2$ that provide the limit models for ground states of $E(\cdot,\G_\varepsilon)$ on sequences of grids $\G_\varepsilon$ with vanishing edgelength.

First, it is now well-known (see e.g. the discussion in \cite{S20}) that, whenever $p\in(2,4)$ and $q\in(2,4)$,
\[
\E_{\R^2}(\mu):=\inf_{u\in H_\mu^1(\R^2)}E(u,\R^2)<0,
\]
$E(\cdot,\R^2)$ being the energy functional introduced by \eqref{eq:ER2}, and that ground states at mass $\mu$, denoted by $\phi_\mu$, exist for every $\mu>0$ and satisfy $\displaystyle \lim_{|x|\to+\infty}\phi_\mu(x)=0$ and, both in $L^2(\R)$ and in the classical sense, equation \eqref{NLS_Standard}. Furthermore, with no loss of generality, $\phi_\mu$ can be taken positive and attaining its $L^\infty$ norm at the origin, and standard rearrangement arguments readily show that $\phi_\mu$ is radially non-increasing on $\R^2$. The same result holds in the case of a single standard nonlinearity 
\begin{equation}
\label{eq:Ebar}
\overline{E}_p(u,\R^2):=\frac12\|\nabla u\|_{L^2(\R^2)}^2-\frac1p\|u\|_{L^p(\R^2)}^p
\end{equation}
whenever $p\in(2,4)$. Observe also that, by definition,
\begin{equation}
	\label{eq:2<1}
	\E_{\R^2}(\mu)<\min\left\{\inf_{v\in H_\mu^1(\R^2)}\overline{E}_p(u,\R^2),\,\inf_{v\in H_\mu^1(\R^2)}\overline{E}_q(u,\R^2)\right\}\,.
\end{equation}

We now want to derive analogous results for ground states of the functionals $E_\theta(\cdot,\R^2)$ and $E_{\theta,R}(\cdot,\R^2)$ introduced by \eqref{eq:ER2_st} and  \eqref{eq:ER2_StR}, respectively. Throughout, we use the following notation: $\vec{v}_\theta=(\cos\theta,\sin\theta)$, $s_\theta$ and $S_{\theta,R}$ are the sets defined in \eqref{eq:sSt} and $\tau_\theta: H^1(\R^2)\to H^\frac12(s_\theta)$ is the trace operator associated to $s_\theta$. Recall also that, by classical trace theory, such operator is bounded and surjective. Finally, for the sake of simplicity we will identify $s_\theta$ with $\R$ whenever this does not give rise to misunderstandings.

The next theorem summarizes our main existence results in this direction.
\begin{thm}
	\label{thm:exGS_theta}
	Let $\theta\in\left(-\frac\pi2,\frac\pi2\right]$ and $R>0$. Then
	\begin{itemize}
		\item[(i)] if $p\in(2,4)$ and $q\in(2,3)$, $\E_{\R^2,\theta}(\mu)$ is attained for every $\mu>0$;
		
		\item[(ii)] if $p\in(2,4)$ and $q\in(2,4)$, $\E_{\R^2,\theta,R}(\mu)$ is attained for every $\mu>0$.
	\end{itemize}
\end{thm}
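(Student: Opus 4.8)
The plan is to treat both $\E_{\R^2,\theta}(\mu)$ and $\E_{\R^2,\theta,R}(\mu)$ as constrained minimization problems on $H_\mu^1(\R^2)$ and to run a concentration--compactness argument à la Lions, the two cases differing only in the Gagliardo--Nirenberg tool used to control the concentrated term. First I would record the two analytic ingredients: the classical $2$D inequality $\|u\|_{L^r(\R^2)}^r\lesssim\mu\,\|\nabla u\|_{L^2(\R^2)}^{r-2}$ on $H_\mu^1(\R^2)$, and a trace--type inequality $\|\tau_\theta u\|_{L^q(s_\theta)}^q\lesssim\|u\|_{L^{2(q-1)}(\R^2)}^{q-1}\|\nabla u\|_{L^2(\R^2)}$ obtained from the fundamental theorem of calculus in the direction orthogonal to $s_\theta$ together with Hölder (the continuous counterpart of Lemma~\ref{l.GN-grid.Zper.0}). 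Combining the trace inequality with the $2$D one on the mass constraint gives $\|\tau_\theta u\|_{L^q(s_\theta)}^q\lesssim\mu^{1/2}\|\nabla u\|_{L^2(\R^2)}^{q-1}$, while $\|u\|_{L^q(S_{\theta,R})}^q\le\|u\|_{L^q(\R^2)}^q\lesssim\mu\,\|\nabla u\|_{L^2(\R^2)}^{q-2}$. Since $p<4$ in both cases, and since $q-1<2$ exactly when $q<3$ whereas $q-2<2$ exactly when $q<4$, these bounds make $E_\theta(\cdot,\R^2)$ coercive and bounded below on $H_\mu^1(\R^2)$ when $q\in(2,3)$ and $E_{\theta,R}(\cdot,\R^2)$ coercive when $q\in(2,4)$; this is precisely the origin of the two exponent ranges in $(i)$ and $(ii)$. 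In particular every minimizing sequence is bounded in $H^1(\R^2)$.

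Next I would establish two sign facts on the levels. Dropping the nonnegative concentrated term gives $E_\theta(u,\R^2)\le\overline E_p(u,\R^2)$ and $E_{\theta,R}(u,\R^2)\le\overline E_p(u,\R^2)$, so both levels are $\le\inf_{H_\mu^1(\R^2)}\overline E_p(\cdot,\R^2)<0$ for $p\in(2,4)$; hence they are strictly negative for every $\mu>0$. More importantly, I would prove the strict inequality $\E_{\R^2,\theta}(\mu)<\inf_{H_\mu^1(\R^2)}\overline E_p(\cdot,\R^2)$ (and likewise for $\E_{\R^2,\theta,R}$) by taking the positive radial ground state $w$ of $\overline E_p(\cdot,\R^2)$ at mass $\mu$ and centering it on $s_\theta$ (resp. inside $S_{\theta,R}$): since $w>0$ the concentrated term is strictly positive, so $E_\theta(w,\R^2)<\overline E_p(w,\R^2)=\inf_{H_\mu^1(\R^2)}\overline E_p(\cdot,\R^2)$. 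This strict gap is what will forbid the mass from escaping to infinity orthogonally to the line/strip.

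Then I would carry out the concentration--compactness trichotomy for a minimizing sequence $(u_n)$. Vanishing is excluded because it forces the $L^p(\R^2)$ norm, and (via the trace/strip bounds) the concentrated term, to vanish, leaving $\liminf E\ge 0$, against negativity. Dichotomy is excluded by strict subadditivity: amplitude scaling $u\mapsto\sqrt t\,u$ with $p,q>2$ gives $E(\sqrt t\,u,\R^2)<t\,E(u,\R^2)$ for $t>1$ whenever the nonlinear terms are nonzero, whence $\nu\mapsto\E(\nu)/\nu$ is strictly decreasing and $\E(\mu)<\E(\mu_1)+\E(\mu_2)$ for every splitting $\mu=\mu_1+\mu_2$ with $\mu_i>0$; since any pair of asymptotically separated pieces has energy at least $\E(\mu_1)+\E(\mu_2)$ (each piece $v$ satisfying $E(v,\R^2)\ge\E(\|v\|_{L^2(\R^2)}^2)$, together with continuity of the level), dichotomy contradicts this. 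Therefore compactness holds: there are $y_n\in\R^2$ with $u_n(\cdot-y_n)\to u$ strongly in $L^r(\R^2)$ for all $r\in[2,\infty)$ and $\|u\|_{L^2(\R^2)}^2=\mu$.

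Finally I would upgrade this to a genuine minimizer of the non--translation--invariant functional. Decomposing $y_n$ into its components along and orthogonal to $\vec v_\theta$, the parallel part is immaterial by invariance of $E_\theta$ (resp. $E_{\theta,R}$) under translations parallel to $s_\theta$, while the orthogonal part must stay bounded: if it diverged, the shifted line would move to infinity, the concentrated term would vanish, and $\liminf E(u_n,\R^2)\ge\inf_{H_\mu^1(\R^2)}\overline E_p(\cdot,\R^2)>\E(\mu)$, a contradiction. Passing to the limit of the bounded orthogonal shift, the strong $L^{2(q-1)}(\R^2)$ convergence together with the trace inequality yields strong convergence of $\|\tau_\theta\cdot\|_{L^q(s_\theta)}^q$ (resp. strong $L^q(\R^2)$ convergence directly controls $\|\cdot\|_{L^q(S_{\theta,R})}^q$), so by weak lower semicontinuity of the Dirichlet term the limit attains the level. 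I expect the main obstacle to be exactly this last point --- certifying that the concentration center does not drift off the line/strip and that the concentrated term passes to the limit strongly --- since it is here that the lack of full translation invariance is felt, and where the strict gap with the free problem $\overline E_p(\cdot,\R^2)$ and the localized trace inequality (usable only for $q<3$ in the line case) are indispensable.
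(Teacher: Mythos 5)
Your proposal is correct, but the compactness mechanism you use is genuinely different from the paper's. Both proofs rest on the same two pillars, and these parts coincide: the coercivity estimates (the planar Gagliardo--Nirenberg inequality plus the trace inequality \eqref{eq:GNR2delta}, which you derive in the same way, integrating along the direction orthogonal to $s_\theta$ and applying H\"older), and the strict gap $\E_{\R^2,\theta}(\mu)<\inf_{v\in H_\mu^1(\R^2)}\overline{E}_p(v,\R^2)<0$ obtained by testing with a positive ground state of $\overline{E}_p$ (the paper's \eqref{eq:stima}). For compactness, however, you run the full Lions trichotomy --- vanishing excluded by negativity, dichotomy excluded by strict subadditivity of the level, followed by an explicit drift control splitting the translations $y_n$ into components parallel and orthogonal to $s_\theta$ --- whereas the paper never invokes the trichotomy: it normalizes the minimizing sequence \emph{only} along the direction of invariance (condition \eqref{eq:invar}), which costs nothing in energy, takes the weak $H^1$ limit $u$, rules out $\|u\|_{L^2(\R^2)}^2\in(0,\mu)$ by Brezis--Lieb plus scaling (as in Lemma \ref{l.ex.crit.}), and rules out $u\equiv0$ by showing that under \eqref{eq:invar} the trace term vanishes (via an interval-wise fractional Gagliardo--Nirenberg estimate), contradicting \eqref{eq:stima}. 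The paper's route thus dispenses with dichotomy cutoffs, with the continuity and subadditivity analysis of $\nu\mapsto\E_{\R^2,\theta}(\nu)$, and with any drift analysis, since the normalizing translations leave $E_\theta$ invariant; your route is more modular, and your drift argument is sound (the localized trace bound over half-planes escaping to infinity does exactly what you claim), but it leaves more to verify: the energy splitting under dichotomy must be checked for the trace term as well, the continuity of the level in the mass is needed, and your strict subadditivity requires the standard extra remark that the nonlinear terms of a minimizing sequence stay bounded away from zero (a consequence of $\E_{\R^2,\theta}(\nu)<0$) --- amplitude scaling alone, ``whenever the nonlinear terms are nonzero,'' only yields the non-strict inequality $\E_{\R^2,\theta}(t\nu)\le t\,\E_{\R^2,\theta}(\nu)$. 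The strip case is handled in both arguments by the same modifications.
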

In the proof of Theorem \ref{thm:exGS_theta}, we will use the following well-known two-dimensional Gagliardo-Nirenberg inequality
	\begin{equation}
	\label{eq:GNR2_clas}
	\|u\|_{L^p(\R^2)}^p\lesssim \|u\|_{L^2(\R^2)}^2\|\nabla u\|_{L^2(\R^2)}^{p-2}\qquad\forall u\in H^1(\R^2)
	\end{equation}
holding for every $p>2$ (see e.g. \cite{Leoni}), and, for every $q>2$,
\begin{equation}
\label{eq:GNR2delta}
		\|\tau_\theta u\|_{L^q(\R)}^q\lesssim\|u\|_{L^2(\R^2)}\|\nabla u\|_{L^2(\R^2)}^{q-1}\qquad\forall u\in H^1(\R^2)\,.
\end{equation}
The validity of the latter estimate can be easily seen when $u\in C_0^\infty(\R^2)$, since in this case
\[
\|\tau_\theta u\|_{L^q(\R)}^q\lesssim\int_{\R^2}|u|^{q-1}|\nabla u|\,dxdy\lesssim\|u\|_{L^{2(q-1)}(\R^2)}^{q-1}\|\nabla u\|_{L^2(\R^2)}\,,
\]
that together with \eqref{eq:GNR2_clas} gives \eqref{eq:GNR2delta}. The extension to a general $u\in H^1(\R^2)$ then follows by density, since by standard one-dimensional Sobolev embeddings and the boundedness of $\tau_\theta:H^s(\R^2)\to H^{s-\frac12}(\R)$ for every $s\in\left(\frac12,\frac32\right)$, it holds
\begin{equation}
	\label{eq:embtrace}
\|\tau_\theta u\|_{L^q(\R)}\lesssim\|\tau_\theta u\|_{H^{\frac12-\frac1q}(\R)}\lesssim\|u\|_{H^{1-\frac1q}(\R^2)}\,,
\end{equation}
that combined with the following Gagliardo-Nirenberg interpolation inequality (which can be easily checked, for instance, via Fourier transform)
\[
\|u\|_{H^{1-\frac1q}(\R^2)}\lesssim\|u\|_{H^1(\R^2)}^{1-\frac1q}\|u\|_{L^2(\R^2)}^\frac1q
\]
ensures that $\|\tau_\theta u_n-\tau_\theta u\|_{L^q(\R)}\to0$ if $u_n\to u$ in $H^1(\R^2)$ as $n\to+\infty$.
\begin{proof}[Proof of Theorem \ref{thm:exGS_theta}]
Let us prove the result for $E_\theta(\cdot,\R^2)$ first. Note that, by \eqref{eq:GNR2_clas} and \eqref{eq:GNR2delta}, 
\begin{equation}
\label{eq:coercE}
E_\theta(u,\R^2)>\frac12\|\nabla u\|_2^2-\frac{C_p}{p}\mu\|\nabla u\|_2^{p-2}-\frac{C_q}{q}\sqrt{\mu}\|\nabla u\|_{2}^{q-1}\qquad\forall u\in H^1(\R^2)\,,
\end{equation}
for suitable constants $C_p,C_q>0$ depending only on $p$ and $q$. Hence, $\E_{\R^2,\theta}(\mu)>-\infty$ for every $\mu>0$ whenever $p\in(2,4)$ and $q\in(2,3)$. Furthermore, since (as we recalled at the beginning of the section) for every $p\in(2,4)$ and $\mu>0$ there exists a positive $u\in H_\mu^1(\R^2)$ such that 
\[
\frac12\|\nabla u\|_{L^2(\R^2)}^2-\frac1p\|u\|_{L^p(\R^2)}^p=\inf_{v\in H_\mu^1(\R^2)}\overline{E}_p(v,\R^2)<0\,,
\]
there results that
\begin{equation}
\label{eq:stima}
\E_{\R^2,\theta}(\mu)<\inf_{v\in H_\mu^1(\R^2)}\overline{E}_p(v,\R^2)<0
\end{equation}
(where $\overline{E}_p(\cdot,\R^2)$ is defined by \eqref{eq:Ebar}).

Let then $(u_n)_n\subset H_\mu^1(\R^2)$ be such that $E_{\theta}(u_n,\R^2)\to\E_{\R^2,\theta}(\mu)$ as $n\to+\infty$. Exploiting the invariance of $E_\theta(\cdot,\R^2)$ by translations along vectors parallel to $\vec{v}_\theta$, we can further assume that, for every $n$,
\begin{equation}
	\label{eq:invar}
	\|\tau_\theta u_n\|_{L^2(0,1)}=\max_{j\in\N}\|\tau_\theta u_n\|_{L^2(j,j+1)}\,.
\end{equation}
 By \eqref{eq:coercE}, $(u_n)_n$ is bounded in $H^1(\R^2)$ and (up to subsequences) 
 \[
 \begin{split}
 	&u_n\rightharpoonup u\:\text{ in }H^1(\R^2)\,\qquad\qquad u_n\to u\:\text{ in }L_{loc}^r(\R^2)\,,\quad\forall r\geq2,\\
 	&\tau_\theta u_n\rightharpoonup \tau_\theta u\:\text{ in }H^\frac12(\R)\,\qquad \tau_\theta u_n\to \tau_\theta u\:\text{ in }L_{loc}^r(\R)\,,\quad\forall r\geq2\,.
 \end{split}
 \]
 By weak lower semicontinuity, $\|u\|_{L^2(\R^2)}^2\in[0,\mu]$. Moreover, if $\|u\|_{L^2(\R^2)}^2=\mu$, then $u\in H_\mu^1(\R^2)$ and, by \eqref{eq:GNR2_clas} and \eqref{eq:GNR2delta}, the convergence of $u_n$ to $u$ and of $\tau_\theta u_n$ to $\tau_\theta u$ will be strong in $L^p(\R^2)$ and in $L^q(\R)$, respectively, so that by lower semicontinuity again one concludes that $E_\theta(u,\R^2)=\E_{\R^2,\theta}(\mu)$, i.e. $u$ is a ground state at mass $\mu$. 
 
 To complete the proof for $E_\theta(\cdot,\R^2)$ it is then enough to exclude that $\|u\|_2^2\in[0,\mu)$.  Since the fact that $\|u\|_2^2\not\in(0,\mu)$ can be proved (in a very classical way) as in the first part of the proof of Lemma \ref{l.ex.crit.} above, assume by contradiction $\|u\|_2=0$, i.e. $u\equiv 0$ on $\R^2$. Recalling the standard Gagliardo-Nirenberg inequality 
 \[
 \|v\|_{H^{\frac12 -\frac1q}(I)}^q\lesssim \|v\|_{H^\frac12(I)}^{q-2}\|v\|_{L^2(I)}^{2}\qquad\forall u\in H^\frac12(I)
 \]
 holding on every interval $I\subset\R$ (see, e.g., \cite{BM18}), by \eqref{eq:embtrace} and \eqref{eq:invar} and the fact that the convergence of $\tau_\theta u_n$ to $0$ is locally strong in $L^2(\R)$, we obtain
 \[
 \begin{split}
 	\|\tau_\theta u_n\|_{L^q(\R)}^q=&\,\sum_{j\in\N}\|\tau_\theta u_n\|_{L^q(j,j+1)}^q\lesssim \sum_{j\in\N} \|\tau_\theta u_n\|_{H^\frac12(j,j+1)}^{q-2}\|\tau_\theta u_n\|_{L^2(j,j+1)}^{2}\\
 	\lesssim&\,\|\tau_\theta u_n\|_{L^2(0,1)}^{2} \sum_{j\in\N} \|\tau_\theta u_n\|_{H^\frac12(j,j+1)}^{q-2}=\|\tau_\theta u_n\|_{L^2(0,1)}^{2}\|\tau_\theta u_n\|_{H^\frac12(\R)}^{q-2}\longrightarrow0
 \end{split}
 \]
 as $n\to+\infty$. Hence, we have
 \[
 \begin{split}
 \E_{\R^2,\theta}(\mu)=\lim_{n\to+\infty}E_\theta(u_n,\R^2)\geq&\,\liminf_{n\to+\infty}\left(\frac12\|\nabla u_n\|_{L^2(\R^2)}^2-\frac1p\|u_n\|_{L^p(\R^2)}^p\right)\\[.2cm]
 =&\,\liminf_{n\to+\infty}\overline{E}_p(u_n,\R^2)\geq\inf_{v\in H_\mu^1(\R^2)}\overline{E}_p(v,\R^2)\,,
 \end{split}
 \]
 contradicting \eqref{eq:stima} and thus proving the claim for $E_\theta(\cdot,\R^2)$.
 
 The proof of the analogous result for $E_{\theta,R}(\cdot,\R^2)$ is almost identical. The only differences are:
 \begin{itemize}[label=$\star$]
 	\item the proof of the lower boundedness of $E_{\theta,\R}(\cdot, \R^2)$, which we obtain here for every $p\in(2,4)$ and $q\in(2,4)$ using \eqref{eq:GNR2_clas} (together with the trivial estimate $\|u\|_{L^q(S_{\theta,\R})}^q\leq\|u\|_{L^q(\R^2)}$) in place of \eqref{eq:GNR2delta};
 	
 	\item the choice of a minimizing sequence satisfying $\displaystyle\|u_n\|_{L^2(R_0)}=\max_{j\in\N}\|u_n\|_{L^2(R_j)}$ instead of \eqref{eq:invar}, where $(R_j)_{j\in\N}$ is a disjoint partition of $S_{\theta,R}$ in identical parallelograms with two edges parallel to $s_\theta$ of length 1;
 	
 	\item the proof of the fact that $\|u_n\|_{L^q(S_{\theta,R})}\to0$ whenever $u_n\rightharpoonup0$ as $n\to+\infty$, which for a general $q\in(2,4)$ follows here by interpolating between the boundedness of $(u_n)_n$ in $H^1(\R^2)$ and the estimate
 	\[
 	\begin{split}
 	\|u_n\|_{L^q(S_{\theta,\R})}^q=\sum_{j\in\N}\|u_n\|_{L^q(R_j)}^q\lesssim&\,\sum_{j\in\N}\|u_n\|_{L^2(R_j)}^2\|u_n\|_{H^1(R_j)}^{q-2}\\[.2cm]
 	\leq &\,\|u_n\|_{L^2(R_0)}^2\|u_n\|_{H^1(\R^2)}^{q-2}\to0
 	\end{split}
 	\]
 	as $n\to+\infty$ if $u_n\to0$ in $L^2_{loc}(\R^2)$ (the first inequality in the previous chain being justified by standard Gagliardo-Nirenberg inequalities on bounded sets of $\R^2$).
 \end{itemize}   
\end{proof}

\begin{rem}
	\label{rem:unbound}
	Observe that the regimes of nonlinearities given in Theorem \ref{thm:exGS_theta} are the only ones for which $E_\theta(\cdot,\R^2)$ and $E_{\theta,R}(\cdot,\R^2)$ admit ground states for every value of $\mu>0$. Indeed, since for every $p>2$ we always have
	\[
	\E_{\R^2,\theta}(\mu)\leq\inf_{v\in H_\mu^1(\R^2)}\overline{E}_p(v,\R^2), \qquad\E_{\R^2,\theta,R}(\mu)\leq\inf_{v\in H_\mu^1(\R^2)}\overline{E}_p(v,\R^2), 
	\]
	with $\overline E_p(\cdot,\R^2)$ as in \eqref{eq:Ebar}, it follows immediately that $\E_{\R^2,\theta}(\mu)=\E_{\R^2,\theta,R}(\mu)=-\infty$ for every $\mu>0$ when $p>4$ and for sufficiently large $\mu$ when $p=4$ (since it is well-known this is true for $\displaystyle\inf_{v\in H_\mu^1(\R^2)}\overline{E}(v,\R^2)$). Analogous arguments show also that $\E_{\R^2,\theta,R}(\mu)=-\infty$ for every $\mu>0$ when $q>4$ and for sufficiently large $\mu$ when $q=4$, even if $p\in(2,4)$.  As for $\E_{\R^2,\theta}$ when $q\geq3$, note that, for every $u\in H_\mu^1(\R^2)$ and every $\lambda>0$, the function $u_\lambda(x):=\lambda u(\lambda x)$ satisfies $u_\lambda\in H_\mu^1(\R^2)$ and 
	\begin{equation*}
		E_{\theta}(u_\lambda,\R^2)=\frac{\lambda^2}{2}\|\nabla u\|_{L^2(\R^2)}^2 -\frac{\lambda^{p-2}}{p}\|u\|_{L^p(\R^2)}^p-\frac{\lambda^{q-1}}{q}\|\tau_\theta u\|_{L^q(\R)}^q\,.
	\end{equation*}
 	When $q>3$, fixing any $u\in H_\mu^1(\R^2)$ and taking $\lambda\to+\infty$ gives again $\E_{\R^2,\theta}(\mu)=-\infty$. When $q=3$ the same can be done for sufficiently large $\mu$ taking $u\in H_\mu^1(\R^2)$ to be almost optimal in \eqref{eq:GNR2delta}, since in this case 
 	\[
 	\begin{split}
 	E_\theta(u_\lambda,\R^2)=&\,\lambda^2\left(\frac12\|\nabla u\|_{L^2(\R^2)}^2-\frac1q\|\tau_\theta u\|_{L^q(\R)}^q\right)-\frac{\lambda^{p-2}}{p}\|u\|_{L^p(\R^2)}^p\\[.2cm]
 	<&\,\frac{\lambda^2}2\|\nabla u\|_{L^2(\R^2)}^2\left(1-\frac{C_3-\varepsilon}3\sqrt{\mu}\right)\to-\infty
 	\end{split}
 	\]
 	as $\lambda\to+\infty$, provided $\mu$ is large enough.
\end{rem}

We conclude this section with the following lemmas, where we denote by $\vec{v}_\theta^\perp:=(-\sin\theta,\cos\theta)$ and by
\[
H_\theta^+:=\left\{P\in\R^2\,:\,P\cdot\vec{v}_\theta^\perp>0\right\},\qquad H_\theta^-:=\left\{P\in\R^2\,:\,P\cdot\vec{v}_\theta^\perp<0\right\}
\]
the half-planes identified by $s_\theta$ and $S_{\theta,R}$, respectively. First we point out of in which sense ground states of $E_\theta(\cdot,\R^2)$ solve \eqref{NLS_Standard} and which further features they display.

\begin{lem}
	\label{lem:reg_s}
	Let $\theta\in\left(-\frac\pi2,\frac\pi2\right]$, $u\in H_\mu^1(\R^2)$ be a ground state of $E_\theta(\cdot,\R^2)$ and denote $u^\pm:=u_{|H_\theta^\pm}$. Then, $u^\pm\in H^2(H_\theta^\pm)$ and satisfy
	\begin{equation}
		\label{side-equation}
	-\Delta u^\pm+\lambda u^\pm=|u^\pm|^{p-2}u\qquad\text{in}\quad L^2(H_\theta^\pm),
	\end{equation}
	for some suitable $\lambda\in\R$, and
	\begin{gather}
		\label{boundary-condition1}
		\tau_\theta u^+=\tau_\theta u^-=\tau_\theta u\qquad\text{in}\quad H^{\frac{3}{2}}(s_\theta),\\[.2cm]
		\label{boundary-condition2}
	\frac{\partial u^+}{\partial\vec{v}_\theta^\perp}-\frac{\partial u^-}{\partial\vec{v}_\theta^\perp}=-|\tau_\theta u|^{q-2}\tau_\theta u\qquad\text{in}\quad H^{\frac{1}{2}}(s_\theta).
	\end{gather}
Moreover, $u\in C(\R^2)\cap L^\infty(\R^2)$, is positive (up to a change of sign) and
\begin{equation}
	\label{eq-mixedsecond}
	\partial_{\vec{v}_\theta\vec{v}_\theta}^2u\in L^2(\R^2),\qquad\partial_{\vec{v}_\theta^\perp\vec{v}_\theta}^2u\in L^2(\R^2),\qquad\partial_{\vec{v}_\theta\vec{v}_\theta^\perp}^2u\in L^2(\R^2)
\end{equation}
\end{lem}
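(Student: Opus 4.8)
The plan is to combine the Euler--Lagrange formulation of the constrained minimization with a tangential difference--quotient argument and half--space elliptic regularity. After rotating coordinates so that $\vec{v}_\theta=(1,0)$ and $\vec{v}_\theta^\perp=(0,1)$ — harmless by the rotational invariance of the Laplacian — we have $s_\theta=\{y=0\}$ and $H_\theta^\pm=\{\pm y>0\}$. First I would record the Euler--Lagrange identity: since $u$ minimizes $E_\theta(\cdot,\R^2)$ at fixed mass, there is a Lagrange multiplier $\lambda\in\R$ with
\[
\int_{\R^2}\big(\nabla u\cdot\nabla\phi+\lambda u\phi\big)\,dx=\int_{\R^2}|u|^{p-2}u\,\phi\,dx+\int_{s_\theta}|\tau_\theta u|^{q-2}\tau_\theta u\,\tau_\theta\phi\,d\sigma\qquad\forall\,\phi\in H^1(\R^2).
\]
Testing with $\phi$ supported in $H_\theta^+$ or in $H_\theta^-$ kills the line integral and yields $-\Delta u+\lambda u=|u|^{p-2}u$ in the sense of distributions on each $H_\theta^\pm$. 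Since $u\in H^1(\R^2)\hookrightarrow L^r(\R^2)$ for every $r<\infty$, the datum $g:=|u|^{p-2}u-\lambda u$ lies in $L^2(\R^2)$; this is the forcing I will feed to elliptic regularity.

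The core step is the tangential regularity $\partial_{\vec{v}_\theta}u\in H^1(\R^2)$, which exploits that both $E_\theta$ and the mass constraint are invariant under translations parallel to $\vec{v}_\theta$. For $h\neq0$ let $D_h$ be the difference quotient in the $\vec{v}_\theta$--direction and test the identity above with $\phi=-D_{-h}D_h u\in H^1(\R^2)$. Discrete integration by parts turns the left--hand side into $\|\nabla D_h u\|_{L^2(\R^2)}^2+\lambda\|D_h u\|_{L^2(\R^2)}^2$; the nonlinear bulk term is bounded by $(p-1)\int_{\R^2}M^{p-2}|D_h u|^2$ with $M:=\max\{|u(\cdot+h\vec{v}_\theta)|,|u|\}$, and, since $\tau_\theta$ commutes with $D_h$, the boundary term is bounded by $(q-1)\int_{s_\theta}N^{q-2}|D_h\tau_\theta u|^2$ with $N:=\max\{|\tau_\theta u(\cdot+h)|,|\tau_\theta u|\}$. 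Because $u\in L^r(\R^2)$ and $\tau_\theta u\in H^{1/2}(s_\theta)\hookrightarrow L^r(s_\theta)$ for all $r<\infty$, Hölder's inequality together with \eqref{eq:GNR2_clas} for the bulk, and with a trace--Sobolev estimate of the form \eqref{eq:embtrace} plus interpolation for the boundary, let me absorb both contributions into $\varepsilon\|\nabla D_h u\|_{L^2(\R^2)}^2+C_\varepsilon\|D_h u\|_{L^2(\R^2)}^2$. This gives $\sup_h\|\nabla D_h u\|_{L^2(\R^2)}<\infty$, hence $\partial_{\vec{v}_\theta}u\in H^1(\R^2)$. I expect this absorption — in particular keeping the boundary trace term subcritical via interpolation — to be the main technical obstacle, although the smallness $q-2<1$ leaves ample room.

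With tangential regularity in hand the remaining claims follow in order. Taking the trace of $\partial_{\vec{v}_\theta}u\in H^1(\R^2)$ and using that the tangential derivative commutes with $\tau_\theta$ gives $\partial_x\tau_\theta u\in H^{1/2}(s_\theta)$; combined with $\tau_\theta u\in H^{1/2}(s_\theta)$ this yields $\tau_\theta u\in H^{3/2}(s_\theta)$, and since $u\in H^1(\R^2)$ admits a single trace this already proves \eqref{boundary-condition1}. Viewing $u^\pm$ as the weak solution of $-\Delta u^\pm=g$ on $H_\theta^\pm$ with Dirichlet datum $\tau_\theta u\in H^{3/2}(s_\theta)$ and $g\in L^2(H_\theta^\pm)$, half--space elliptic regularity gives $u^\pm\in H^2(H_\theta^\pm)$ and \eqref{side-equation}. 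Now the integration by parts across $s_\theta$ in the Euler--Lagrange identity is legitimate; subtracting the bulk contributions identified in \eqref{side-equation}, the surface integrals must match the line integral for every $\phi$, and a computation of the outer normals ($-\vec{v}_\theta^\perp$ for $H_\theta^+$, $+\vec{v}_\theta^\perp$ for $H_\theta^-$) produces exactly the jump relation \eqref{boundary-condition2}.

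Finally, for \eqref{eq-mixedsecond} I observe that, by \eqref{boundary-condition1}, the function $\partial_{\vec{v}_\theta}u$ has matching traces across $s_\theta$ and lies in $H^1$ on each half--plane, so by the standard gluing lemma $\partial_{\vec{v}_\theta}u\in H^1(\R^2)$; its two first derivatives are $\partial^2_{\vec{v}_\theta\vec{v}_\theta}u$ and $\partial^2_{\vec{v}_\theta^\perp\vec{v}_\theta}u$, both in $L^2(\R^2)$, while $\partial^2_{\vec{v}_\theta\vec{v}_\theta^\perp}u$ coincides with $\partial^2_{\vec{v}_\theta^\perp\vec{v}_\theta}u$ almost everywhere on each half--plane by Schwarz's theorem (where $u^\pm\in H^2$) and is therefore the same $L^2(\R^2)$ function; note that $\partial^2_{\vec{v}_\theta^\perp\vec{v}_\theta^\perp}u$ is deliberately omitted, as the jump \eqref{boundary-condition2} prevents it from belonging to $L^2(\R^2)$. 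Continuity and boundedness then come from $H^2(H_\theta^\pm)\hookrightarrow C(\overline{H_\theta^\pm})\cap L^\infty$ in dimension two together with the matching of $u^+,u^-$ on $s_\theta$, giving $u\in C(\R^2)\cap L^\infty(\R^2)$. Positivity follows the usual route: $|u|$ shares the energy and the mass of $u$ (since $|\nabla|u||=|\nabla u|$ a.e. and $\tau_\theta|u|=|\tau_\theta u|$), hence is also a ground state and one may take $u\ge0$; writing the equation as $-\Delta u+c\,u=0$ with $c:=\lambda-|u|^{p-2}\in L^\infty_{\mathrm{loc}}(\R^2)$, the strong maximum principle gives $u>0$ in $H_\theta^+\cup H_\theta^-$, while a Hopf boundary--point argument on both sides combined with \eqref{boundary-condition2} excludes zeros on $s_\theta$.
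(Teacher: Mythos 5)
Your regularity analysis is sound and in places more complete than the paper's own proof: where the paper invokes ``standard regularity theory'' to conclude $u^\pm\in H^2(H_\theta^\pm)$, you supply the actual mechanism (tangential difference quotients exploiting the translation invariance along $\vec{v}_\theta$, with absorption via \eqref{eq:GNR2_clas} and the trace estimate \eqref{eq:embtrace}), which is exactly the right tool for this transmission problem. Your derivation of \eqref{boundary-condition1}, \eqref{side-equation} and \eqref{boundary-condition2} matches the paper's, and your proof of \eqref{eq-mixedsecond} -- gluing $\partial_{\vec{v}_\theta}u^\pm$ across $s_\theta$ using the matching tangential traces -- is a cleaner packaging of the paper's explicit computation with the shrinking strip $\R\times(-\varepsilon,\varepsilon)$; the two arguments are equivalent in content.

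The positivity step, however, has a genuine gap. The paper excludes zeros of $u$ on $s_\theta$ by Steiner symmetrization with respect to $s_\theta$: the symmetrized function is again a ground state, and a zero of it on the line would force it to vanish on an entire orthogonal line, contradicting the maximum principle off $s_\theta$ -- an argument that uses only the continuity of $u$ itself. Your route through Hopf's boundary-point lemma instead needs pointwise information on the normal derivatives at the putative zero $x_0\in s_\theta$ of $\tau_\theta u$. But \eqref{boundary-condition2} is an identity in $H^{\frac12}(s_\theta)$, i.e. between functions defined only almost everywhere, and with only $u^\pm\in H^2(H_\theta^\pm)$ the traces $\partial u^\pm/\partial\vec{v}_\theta^\perp$ lie in $H^{\frac12}(s_\theta)$, which in one dimension does not embed into the continuous functions. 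If the zero set $Z=\left\{\tau_\theta u=0\right\}$ has measure zero -- precisely the expected situation, e.g. a single zero -- then the a.e. identity \eqref{boundary-condition2} carries no information at points of $Z$, whereas Hopf's lemma gives information exactly and only at points of $Z$; the two cannot be combined as you propose. To repair the argument you would first need $C^1$ regularity of $u^\pm$ up to $s_\theta$ near $x_0$, so that \eqref{boundary-condition2} holds pointwise by continuity; this is true (the paper states it in the remark following the lemma) but requires a further bootstrap, e.g. via the Dirichlet-to-Neumann operator or Schauder estimates, that your proposal does not supply. A minor additional point: before applying the strong maximum principle you must rule out $u\equiv0$ on one of the two half-planes, which follows from \eqref{eq:stima} as in the paper, since $u\equiv 0$ on a half-plane forces $\tau_\theta u\equiv 0$ and hence $E_\theta(u,\R^2)\geq\inf_{v\in H_\mu^1(\R^2)}\overline{E}_p(v,\R^2)$.
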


\begin{proof}
	Since $u$ is a critical point of $E_\theta(\cdot,\R^2)$ in $H_\mu^1(\R^2)$, computing the Euler-Lagrange equation of $u$, e.g., with variations in the form $\sqrt{\mu/\|u+t\varphi\|_2^2}(u+t\varphi)$, with $\varphi\in C_0^\infty(\R^2\setminus s_\theta)$, yields \eqref{side-equation} in distributional sense. By standard regularity theory, this immediately implies that $u^\pm\in H^2(H_\theta^\pm)$, and thus that \eqref{side-equation} is satisfied in $L^2(H_\theta^\pm)$, that $\tau_\theta u^\pm\in H^{\frac{3}{2}}(s_\theta)\subset C(s_\theta)$ and that $\frac{\partial u^\pm}{\partial\vec{v}_\theta^\perp}\in H^{\frac{1}{2}}(s_\theta)$. In addition, let $(u_n)_n\subset C_0^\infty(\R^2)$ be such that $u_n\to u$ in $H^1(\R^2)$ as $n\to+\infty$. Then, since the trace operator is bounded from $H^1(H_\theta^\pm)$ to $L^2(s_\theta)$,
	\[
	\begin{split}
		\|\tau_\theta u^+-\tau_\theta u^-\|_{L^2(\R)}&\,\leq\|\tau_\theta u^+ - \tau_\theta u_n\|_{L^2(\R)}+\|\tau_\theta u_n-\tau_\theta u^-\|_{L^2(\R)}\\
		&\,\lesssim\|u^+-u_n\|_{H^1(H_\theta^+)}+\|u_n-u^-\|_{H^1(H_\theta^-)}\to0\qquad\text{as }n\to+\infty
	\end{split}
	\]
	so that $\tau_\theta u^+=\tau_\theta u^-$ almost everywhere on $s_\theta$. Hence, by the regularity of $\tau_\theta u^\pm$, one obtains \eqref{boundary-condition1}. Moreover, since $u^\pm\in H^2(H_\theta^\pm)$ implies $u^\pm\in C\big(\overline{H_\theta^\pm}\big)\cap L^\infty\big(\overline{H_\theta^\pm}\big)$, there results that $u\in C(\R^2)\cap L^\infty(\R^2)$. On the other hand, \eqref{boundary-condition2} arises just computing the Euler-Lagrange equation of $u$ using, e.g., variations as before but with $s_\theta\subset\text{supp}\{\varphi\}$.
	
	At this point, it is not difficult to see that, if $u$ is a ground state, then $|u|$ is a ground state too. Thus, applying the Maximum Principle to \eqref{side-equation}, one obtains that $u$ cannot vanish on $H_\theta^\pm$. Moreover, it displays the same sign on $H_\theta^\pm$, since otherwise it should vanish on $s_\theta$, which contradicts \eqref{eq:stima}. In order to exclude the vanishing also on a proper subset of $s_\theta$, one can rely on the properties of the Steiner symmetrization of $u$ with respect to the line $s_\theta$, as explained in the following. Up to rotations, it is sufficient to address the case $\theta=0$, i.e. the case where the line $s_\theta$ coincides with the horizontal axis $\{y=0\}$. Following e.g. the reference \cite{Brock99}, one can introduce the Steiner symmetrization $u^\star$ of $u$ with respect to $\{y=0\}$, which is symmetric with respect to the line $\{y=0\}$, non-increasing on every orthogonal half-line of the form $\{x=k, y\geq 0\}$ and on such half-lines attains its maximum at the point $(k,0)$: for this reason, the term $-\frac1q\|\tau_\theta \cdot\|_{L^q(s_\theta)}^q$ does not increase passing from $u$ to $u^\star$. At the same time, the term $-\frac1p\|\cdot\|_{L^p(\R^2)}^p$ is preserved since $u$ and $u^\star$ are equimeasurable, while the term $\frac12\|\nabla\cdot\|_{L^2(\R^2)}^2$ does not increase by P\'olya-Szeg\H{o} inequality (see for istance \cite[Theorem 1]{Brock99}). Given all this, if $u$ is a ground state, then also $u^\star$ is a ground state. Moreover, if $u^\star$ vanishes at some point $(k,0)$ on $\{y=0\}$, then it vanishes also on the whole line $\{x=k\}$, but this contradicts the fact that every ground state does not vanish outside $s_\theta$, thus $u$ is positive on the whole $\R^2$, up to a change of sign. 
	
	It is, then, left to prove \eqref{eq-mixedsecond}. Up to rotations, it is sufficient to address the case $\theta=0$ (where $s_0=\R$, $H_0^\pm=\R_\pm^2$), for which the claim reduces to $\partial_{xx}^2u$, $\partial_{yx}^2u$, $\partial_{xy}^2u\in L^2(\R^2)$. We show the proof for $\partial_{yx}^2u$, the other being analogous (or simpler). Since $u\in H^1(\R^2)$, we already know that $\partial_x u\in L^2(\R^2)$. Given $\varphi\in C_c^\infty(\R^2)$ and setting $I_\varepsilon=\R\times(-\varepsilon,\varepsilon)$ for $\varepsilon>0$, we have
	\[
	\begin{split}
	\int_{\R^2}\partial_xu\partial_y\varphi\,dxdy&\,=\lim_{\varepsilon\to0}\int_{\R^2\setminus I_\varepsilon}\partial_xu\partial_y\varphi\,dxdy\\
	&\,=\lim_{\varepsilon\to0}\int_{\R_+^2\setminus I_\varepsilon}\partial_xu^+\partial_y\varphi\,dxdy+\lim_{\varepsilon\to0}\int_{\R_-^2\setminus I_\varepsilon}\partial_xu^-\partial_y\varphi\,dxdy\\
	&\,=-\lim_{\varepsilon\to0}\int_{\R_+^2\setminus I_\varepsilon}\partial_{yx}^2u^+\varphi\,dxdy-\lim_{\varepsilon\to0}\int_{\R_-^2\setminus I_\varepsilon}\partial_{yx}^2u^-\varphi\,dxdy\\
	&\,\,\quad+\lim_{\varepsilon\to0}\int_\R\left(\partial_xu^-(x,-\varepsilon)\varphi(x,-\varepsilon)-\partial_xu^+(x,\varepsilon)\varphi(x,\varepsilon)\right)\,dx\\
	&\,=-\int_{\R_+^2}\partial_{yx}^2u^+\varphi\,dxdy-\int_{\R_-^2}\partial_{yx}^2u^-\varphi\,dxdy\\
	&\,\,\quad-\lim_{\varepsilon\to0}\int_\R\left(u^-(x,-\varepsilon)\partial_x\varphi(x,-\varepsilon)-u^+(x,\varepsilon)\partial_x\varphi(x,\varepsilon)\right)\,dx\\
	&\,=-\int_{\R_+^2}\partial_{yx}^2u^+\varphi\,dxdy-\int_{\R_-^2}\partial_{yx}^2u^-\varphi\,dxdy\,,
	\end{split}
	\]
	since
	\[
	\lim_{\varepsilon\to0}\int_\R\left(u^-(x,-\varepsilon)\partial_x\varphi(x,-\varepsilon)-u^+(x,\varepsilon)\partial_x\varphi(x,\varepsilon)\right)\,dx\to0\qquad\text{as}\quad\varepsilon\to0
	\]
	by Dominated Convergence, since $\varphi$ is compactly supported and $u,\partial_x\varphi$ are continuous. Hence, 
	\[
	\partial_{yx}^2u=\begin{cases}
	\partial_{yx}^2u^+ & \text{on }\R_+^2\\[.2cm]
	\partial_{yx}^2u^- & \text{on }\R_-^2\,,
	\end{cases}
	\]
	and thus $\partial_{yx}^2u\in L^2(\R^2)$.
\end{proof}

\begin{rem}
	Note that, further developing the arguments of the proof of Lemma \ref{lem:reg_s}, one can see that \eqref{boundary-condition2} is in fact an equality in $H^{\frac{3}{2}}(s_\theta)$ and that $u^\pm\in C^1(H_\theta^\pm)$. 
\end{rem}

Finally, we mention the main features of the ground states of $E_{\theta,R}(\cdot,\R^2)$ as solutions of \eqref{NLS_Strip}. Here se omit the proof as it is analogous to proving the feature of the soliton in the standard case.

\begin{lem}
	\label{lem:reg_S}
	Let $\theta\in\left(-\frac\pi2,\frac\pi2\right]$ and $R>0$. If $u\in H_\mu^1(\R^2)$ is a ground state of $E_{\theta,R}(\cdot,\R^2)$, then $u\in H^2(\R^2)$ and satisfies \eqref{NLS_Strip} in $L^2(\R^2)$ and is positive (up to a change of sign).
\end{lem}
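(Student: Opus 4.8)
The plan is to follow the classical scheme for normalized solitons of the standard NLS in $\R^2$, the only structural difference being the presence of the bounded multiplier $\chi_{S_{\theta,R}}$ in front of the $q$-nonlinearity. First I would derive the Euler--Lagrange equation satisfied by $u$. Since $u$ minimizes $E_{\theta,R}(\cdot,\R^2)$ over $H_\mu^1(\R^2)$, testing with normalized variations of the form $\sqrt{\mu/\|u+t\varphi\|_{L^2(\R^2)}^2}\,(u+t\varphi)$, $\varphi\in C_0^\infty(\R^2)$, and differentiating at $t=0$ produces a Lagrange multiplier $\lambda\in\R$ and the weak identity
\[
\int_{\R^2}\nabla u\cdot\nabla\varphi\,dx+\lambda\int_{\R^2}u\varphi\,dx=\int_{\R^2}|u|^{p-2}u\,\varphi\,dx+\int_{S_{\theta,R}}|u|^{q-2}u\,\varphi\,dx
\]
for every $\varphi\in C_0^\infty(\R^2)$, that is, \eqref{NLS_Strip} in the sense of distributions.

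Next I would upgrade the regularity by a standard bootstrap. Since $u\in H^1(\R^2)$, the two-dimensional Sobolev embedding gives $u\in L^r(\R^2)$ for every $r\in[2,+\infty)$; in particular, as $p,q\in(2,4)$, both $2(p-1)$ and $2(q-1)$ are finite, so $|u|^{p-2}u\in L^2(\R^2)$ and, $\chi_{S_{\theta,R}}$ being a bounded multiplier, $\chi_{S_{\theta,R}}|u|^{q-2}u\in L^2(\R^2)$ as well. Hence the right-hand side $-\lambda u+|u|^{p-2}u+\chi_{S_{\theta,R}}|u|^{q-2}u$ of $-\Delta u$ belongs to $L^2(\R^2)$, and elliptic regularity yields $u\in H^2(\R^2)$ together with \eqref{NLS_Strip} holding in $L^2(\R^2)$. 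It is precisely here that the strip case departs from the singular line case of Lemma \ref{lem:reg_s}: since $\chi_{S_{\theta,R}}|u|^{q-2}u$ is a genuine $L^2$ function (the jump of $\chi_{S_{\theta,R}}$ across $\partial S_{\theta,R}$ only affects an $L^2$ term and does not produce a distributional measure on a line), no loss of normal derivative occurs and one recovers full $H^2$ regularity on the whole plane, rather than merely on each half-plane.

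Finally I would prove positivity. Because $\|\nabla|u|\|_{L^2(\R^2)}\le\|\nabla u\|_{L^2(\R^2)}$ and the remaining terms of $E_{\theta,R}(\cdot,\R^2)$ depend on $u$ only through $|u|$, replacing $u$ by $|u|$ does not increase the energy, so $|u|$ is again a ground state and solves \eqref{NLS_Strip}. Since $H^2(\R^2)\hookrightarrow C(\R^2)\cap L^\infty(\R^2)$, the function $|u|\ge0$ is continuous, bounded, and solves the linear equation $-\Delta|u|+c(x)|u|=0$ with $c:=\lambda-|u|^{p-2}-\chi_{S_{\theta,R}}|u|^{q-2}\in L^\infty(\R^2)$; moreover $|u|\not\equiv0$, as $\E_{\R^2,\theta,R}(\mu)<0$ forces every ground state to be nontrivial. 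The strong maximum principle then gives $|u|>0$ on $\R^2$, i.e.\ $u$ has constant sign and, up to a change of sign, $u>0$. I do not expect any serious obstacle in this argument: the only point requiring care is the observation just made in the regularity step, namely that the characteristic function enters as a bounded multiplier and therefore keeps the nonlinearity in $L^2(\R^2)$ globally, which is what makes the strip problem genuinely easier than the line problem and fully parallel to the standard two-nonlinearity soliton.
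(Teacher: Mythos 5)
Your proposal is correct and follows precisely the route the paper has in mind when it omits this proof as ``analogous to the standard case'': Euler--Lagrange equation via normalized variations, global $H^2(\R^2)$ regularity from the fact that $\chi_{S_{\theta,R}}$ is a bounded multiplier keeping the right-hand side of \eqref{NLS_Strip} in $L^2(\R^2)$, and positivity via $|u|$ plus the strong maximum principle. Your side observation that the characteristic function produces no distributional singularity on $\partial S_{\theta,R}$ is also consistent with the paper's subsequent remark that the normal derivatives match across the strip boundary, in contrast with the line case of Lemma \ref{lem:reg_s}.
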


\begin{rem}
 Note that, arguing as in the proof of Lemma \ref{lem:reg_s}, one can check that in the case of the strip the normal derivatives at the boundary coincide, thus preventing the singular behavior exhibited by the ground states of $E_{\theta}(\cdot,\R^2)$.
\end{rem}

\section{Singular limit with $\Z^2$-periodic point nonlinearities: proof of Theorem \ref{thm:limZ2}}
\label{sec:limZ2}
This section is devoted to the proof of Theorem \ref{thm:limZ2}, namely to show that the (properly scaled) ground state problem $\E_{\G_\varepsilon}$ with $\Z^2$-periodic point nonlinearities converges in a proper sense to the two-dimensional one $\E_{\R^2}$ as $\varepsilon\to0$. 

Throughout, we set $V_\varepsilon:=\varepsilon V$, $\varepsilon>0$, to be the $\Z^2$-periodic subset of $\V_{\G_\varepsilon}$ where the point nonlinearities of $E(\cdot,\G_\varepsilon)$ are located, with $V\subset\V_{\G_1}$  the corresponding $\Z^2$-periodic set in $\G_1$. Furthermore, $\alpha, \beta $ will be fixed as in Theorem \ref{thm:limZ2}, i.e.
\[
\alpha=\frac12,\qquad\beta=\frac{\#\left(\V_{\G_1}\cap Q_0\right)}{\# V_0}\varepsilon\,,
\]
where $Q_0,V_0$ are the subsets of $\G_1$ associated to $V$ as in Remark \ref{rem:Per_dec}.

We begin with a first upper bound on the ground state level on grids with shrinking edges.
\begin{lem}
	\label{lem:uplev}
	For every $p\in(2,4)$, $q\in(2,4)$ and $\mu>0$, there results that
	\[
	\varepsilon\E_{\G_\varepsilon}\left(\frac{2\mu}{\varepsilon}\right)\leq\E_{\R^2}(\mu)+o(1)\qquad\text{as }\varepsilon\to0.
	\]
\end{lem}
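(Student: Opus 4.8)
The plan is to establish this upper bound through a recovery-sequence argument: I build a competitor on $\G_\varepsilon$ by sampling a nearly optimal competitor for $\E_{\R^2}(\mu)$ at the vertices, and show its rescaled energy converges to $\E_{\R^2}(\mu)$. First I would fix $\delta>0$ and, using the density of $C_c^\infty(\R^2)$ in $H^1(\R^2)$ and the continuity of $E(\cdot,\R^2)$, select $\phi\in C_c^\infty(\R^2)$ with $\|\phi\|_{L^2(\R^2)}^2=\mu$ and $E(\phi,\R^2)\le\E_{\R^2}(\mu)+\delta$ (alternatively one may take $\phi=\phi_\mu$ directly, at the cost of controlling its exponential tails). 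Then I define $v_\varepsilon:\G_\varepsilon\to\R$ by setting $v_\varepsilon(\vv):=\phi(\vv)$ at every $\vv\in\V_{\G_\varepsilon}$ and extending affinely along each edge; since $\phi$ has compact support, $v_\varepsilon$ is a continuous, piecewise-affine element of $H^1(\G_\varepsilon)$ supported on finitely many edges.

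The core of the argument is a family of Riemann-sum asymptotics. On each horizontal line $H_{\varepsilon j}$ the restriction of $v_\varepsilon$ is the one-dimensional piecewise-linear interpolant of $\phi(\cdot,\varepsilon j)$ at the nodes $\varepsilon\Z$, and similarly on each vertical line $V_{\varepsilon i}$. Summing the corresponding one-dimensional interpolation estimates over $j$ (respectively $i$) and recognizing the outer sums as Riemann sums of step $\varepsilon$ yields, for every $r\ge2$,
\[
\varepsilon\|v_\varepsilon\|_{r,\varepsilon}^r\xrightarrow[\varepsilon\to0]{}2\|\phi\|_{L^r(\R^2)}^r,\qquad\varepsilon\|v_\varepsilon'\|_{2,\varepsilon}^2\xrightarrow[\varepsilon\to0]{}\|\nabla\phi\|_{L^2(\R^2)}^2,
\]
where the factor $2$ records the two families of edges (each contributing density $1/\varepsilon$ per unit area), while the kinetic term picks up only $\partial_x\phi$ on horizontal edges and $\partial_y\phi$ on vertical ones. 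In particular $\varepsilon\|v_\varepsilon\|_{2,\varepsilon}^2\to2\mu$, so putting $c_\varepsilon:=\big(\tfrac{2\mu/\varepsilon}{\|v_\varepsilon\|_{2,\varepsilon}^2}\big)^{1/2}\to1$ and $u_\varepsilon:=c_\varepsilon v_\varepsilon$ produces a genuine competitor $u_\varepsilon\in H^1_{2\mu/\varepsilon}(\G_\varepsilon)$ whose energy differs from that of $v_\varepsilon$ by a factor tending to $1$.

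With $u_\varepsilon$ in hand, the two spread contributions pass to the limit directly: since $\alpha=\tfrac12$, one gets $\varepsilon\,\tfrac12\|u_\varepsilon'\|_{2,\varepsilon}^2\to\tfrac12\|\nabla\phi\|_{L^2(\R^2)}^2$ and $\varepsilon\,\tfrac{\alpha}{p}\|u_\varepsilon\|_{p,\varepsilon}^p\to\tfrac1p\|\phi\|_{L^p(\R^2)}^p$, the prefactor $\alpha=\tfrac12$ being tuned precisely to absorb the factor $2$. For the concentrated term I would invoke Lemma \ref{l.GN-grid-i}: because $\#(\V_{\G_\varepsilon}\cap Q_0)=\#(\V_{\G_1}\cap Q_0)$ and $\#V_0$ is scale-invariant, the chosen $\beta=\tfrac{\#(\V_{\G_1}\cap Q_0)}{\#V_0}\varepsilon$ makes the prefactor $\varepsilon\tfrac{2\#(\V_{\G_\varepsilon}\cap Q_0)}{\#V_0}$ appearing in that lemma equal to $2\beta$, so that
\[
\Big|\beta\sum_{\vv\in V_\varepsilon}|u_\varepsilon(\vv)|^q-\tfrac12\|u_\varepsilon\|_{q,\varepsilon}^q\Big|\lesssim\varepsilon\|u_\varepsilon\|_{2(q-1),\varepsilon}^{q-1}\|u_\varepsilon'\|_{2,\varepsilon}.
\]
Multiplying by $\varepsilon/q$, the main term satisfies $\tfrac{\varepsilon}{2q}\|u_\varepsilon\|_{q,\varepsilon}^q\to\tfrac1q\|\phi\|_{L^q(\R^2)}^q$, whereas the remainder is $O\big(\varepsilon\cdot\varepsilon\cdot\varepsilon^{-1/2}\cdot\varepsilon^{-1/2}\big)=O(\varepsilon)$ by the asymptotics above (recall $\|u_\varepsilon\|_{2(q-1),\varepsilon}^{q-1}=O(\varepsilon^{-1/2})$ and $\|u_\varepsilon'\|_{2,\varepsilon}=O(\varepsilon^{-1/2})$). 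Summing the three contributions gives $\varepsilon E(u_\varepsilon,\G_\varepsilon)\to E(\phi,\R^2)\le\E_{\R^2}(\mu)+\delta$; since $u_\varepsilon\in H^1_{2\mu/\varepsilon}(\G_\varepsilon)$ we have $\varepsilon\E_{\G_\varepsilon}(2\mu/\varepsilon)\le\varepsilon E(u_\varepsilon,\G_\varepsilon)$, and letting first $\varepsilon\to0$ and then $\delta\to0$ concludes.

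The hard part is making the Riemann-sum limits rigorous and uniform, i.e.\ verifying that the rescaled grid norms of the sampled $\phi$ converge to the corresponding $\R^2$ norms. This rests on standard one-dimensional piecewise-linear interpolation estimates, but one must check that summing them over the $O(\varepsilon^{-2})$ relevant edges and over the two edge directions leaves an error that still vanishes once multiplied by the global factor $\varepsilon$; the compact support (or exponential decay) and smoothness of $\phi$ are exactly what keep these error sums controlled. Everything else is bookkeeping: the calibrations $\alpha=\tfrac12$ and $\beta=\tfrac{\#(\V_{\G_1}\cap Q_0)}{\#V_0}\varepsilon$ are chosen so that the grid-to-plane density mismatches — the factor $2$ for the spread norms and the vertex density for the concentrated term — cancel, and the three limits reassemble into $E(\phi,\R^2)$.
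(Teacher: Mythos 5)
Your proposal is correct, and its architecture coincides with the paper's: both are recovery-sequence arguments that build a competitor on $\G_\varepsilon$ from a planar (near-)minimizer, exploit the calibration $\alpha=\tfrac12$, $\beta=\tfrac{\#(\V_{\G_1}\cap Q_0)}{\#V_0}\varepsilon$ so that the edge-density factor $2$ and the vertex density cancel, control the concentrated term via Lemma \ref{l.GN-grid-i} with exactly your $O(\varepsilon)$ bookkeeping for the remainder, and finish by renormalizing the mass. The difference lies in how the competitor and the norm asymptotics are produced. The paper takes $u_\varepsilon$ to be the \emph{restriction} of the actual ground state $\phi_\mu$ to $\G_\varepsilon$ (legitimate since $\phi_\mu\in C^\infty(\R^2)\cap H^2(\R^2)$) and imports the comparisons $\big|\varepsilon\|u_\varepsilon'\|_{2,\varepsilon}^2-\|\nabla\phi_\mu\|_{L^2(\R^2)}^2\big|\lesssim\varepsilon$ and $\big|\tfrac{\varepsilon}{2}\|u_\varepsilon\|_{r,\varepsilon}^r-\|\phi_\mu\|_{L^r(\R^2)}^r\big|\lesssim\varepsilon$ wholesale from \cite[Lemma 4.1]{D24}; this makes the proof short but dependent on that external machinery. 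You instead sample a compactly supported smooth $\delta$-near-minimizer at the vertices, interpolate affinely along edges, and prove the Riemann-sum asymptotics by hand; the density step is what lets you avoid both the tails of $\phi_\mu$ and any appeal to its $H^2$ regularity. Your route is more self-contained and elementary (the interpolation estimates you need are indeed routine for $\phi\in C_c^\infty(\R^2)$, with $O(\varepsilon^{-2})$ relevant edges each contributing $O(\varepsilon^3)$ error before the global factor $\varepsilon$), at the cost of writing out those estimates; the paper's route trades this for a citation. Either way the limit $\varepsilon E(u_\varepsilon,\G_\varepsilon)\to E(\phi,\R^2)$ and the diagonal argument in $\delta$ give exactly the claimed $\limsup$ bound.
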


\begin{proof}
	As recalled at the beginning of Section \ref{sec:R2}, when $p,q\in(2,4)$ there exists $\phi_\mu\in H_\mu^1(\R^2)$ such that $E(\phi_\mu,\R^2)=\E_{\R^2}(\mu)$ for every $\mu>0$. By standard regularity theory, $\phi_\mu\in C^\infty(\R^2)\cap H^2(\R^2)$. In addition, we can set $u_\varepsilon:={\phi_\mu}_{|\G_\varepsilon}$ and, by \cite[Lemma 4.1]{D24}, we obtain
	\begin{equation}
		\label{eq:ueps1}
		\left|\varepsilon\|u_\varepsilon'\|_{2,\varepsilon}^2-\|\nabla \phi_\mu\|_{L^2(\R^2)}^2\right|\leq C\varepsilon,\quad \left|\frac\varepsilon2\|u_\varepsilon\|_{r,\varepsilon}^r-\|\phi_\mu\|_{L^r(\R^2)}^r\right|\leq C\varepsilon\quad\forall r\geq2, \qquad\text{as}\quad\varepsilon\to0,
	\end{equation}
	which in particular entails $u_\varepsilon\in H^1(\G_\varepsilon)$.	Moreover, since 
	\[
	\begin{split}
	&\,\left|\|\phi_\mu\|_{L^q(\R^2)}^q-\varepsilon^2\frac{\#\left(\V_{\G_1}\cap Q_0\right)}{\# V_0}\sum_{\vv\in V_\varepsilon}|u_\varepsilon(\vv)|^q\right|\\
	&\qquad\qquad\leq \left|\|\phi_\mu\|_{L^q(\R^2)}^q-\frac\varepsilon2\|u_\varepsilon\|_{q,\varepsilon}^q\right|+\varepsilon\left|\frac12\|u_\varepsilon\|_{q,\varepsilon}^q-\varepsilon\frac{\#\left(\V_{\G_1}\cap Q_0\right)}{\# V_0}\sum_{\vv\in V_\varepsilon}|u_\varepsilon(\vv)|^q\right|\,,
	\end{split}
	\]
	by \eqref{eq:ueps1} and Lemma \ref{l.GN-grid-i} we have
	\begin{multline*}
		\,\left|\|\phi_\mu\|_{L^q(\R^2)}^q-\varepsilon^2\frac{\#\left(\V_{\G_1}\cap Q_0\right)}{\# V_0}\sum_{\vv\in V_\varepsilon}|u_\varepsilon(\vv)|^q\right|\,\leq o(1)+\varepsilon^2\|u_\varepsilon\|_{2(q-1),\varepsilon}^{q-1}\|u_\varepsilon'\|_{2,\varepsilon}\\[.2cm]
		=o(1)+\varepsilon\|\phi_\mu\|_{L^2(q-1)(\R^2)}^{q-1}\|\nabla\phi_\mu\|_{L^2(\R^2)}+o(\varepsilon)=o(1)\qquad\text{as}\quad\varepsilon\to0.
	\end{multline*}
	Now, define the function $v_\varepsilon\in H_{\frac{2\mu}{\varepsilon}}^1(\G_\varepsilon)$ as
	\[
	v_\varepsilon= \sqrt{\frac{2\mu}{\varepsilon\|u_\varepsilon\|_2^2}}\,u_\varepsilon.
	\]
	Coupling the previous estimate with \eqref{eq:ueps1} entails
	\begin{multline*}
	\varepsilon\E_{\G_\varepsilon}\left(\frac{2\mu}\varepsilon\right)\,\leq \varepsilon E(v_\varepsilon,\G_\varepsilon)\\
	= \frac\varepsilon2\,\frac{2\mu}{\varepsilon\|u_\varepsilon\|_{2,\varepsilon}^2}\|u_\varepsilon'\|_{2,\varepsilon}^2-\frac\varepsilon{2p}\left(\frac{2\mu}{\varepsilon\|u_\varepsilon\|_{2,\varepsilon}^2}\right)^\frac p2\|u_\varepsilon\|_{p,\varepsilon}^p-\varepsilon^2\frac{\#\left(\V_{\G_1}\cap Q_0\right)}{q\#V_0}\left(\frac{2\mu}{\varepsilon\|u_\varepsilon\|_{2,\varepsilon}^2}\right)^\frac q2\sum_{\vv\in V_\varepsilon}|u_\varepsilon(\vv)|^q\\
	\,\,\,=(1+o(1))\left(\frac\varepsilon2\|u_\varepsilon'\|_{2,\varepsilon}^2-\frac{\varepsilon}{2p}\|u_\varepsilon\|_{p,\varepsilon}^p-\varepsilon^2\frac{\#\left(\V_{\G_1}\cap Q_0\right)}{q\#V_0}\sum_{\vv\in V_\varepsilon}|u_\varepsilon(\vv)|^q\right)=E(\phi_\mu,\R^2)+o(1).
	\end{multline*}
	
\end{proof}

The upper bound we established above enables one to establish the following a priori estimates on ground states on $\G_\varepsilon$.

\begin{lem}
	\label{lem:aprZ2}
	For every $p\in(2,4)$, $q\in(2,4)$ and $\mu>0$ there exists $M>0$ such that
	\[
	\frac{1}{M}\le \varepsilon\|u^\prime_\varepsilon\|_{2,\varepsilon}^2\,,\,\varepsilon\|u_\varepsilon\|_{p,\varepsilon}^p\,,\,\varepsilon^2\sum_{\vv\in V_\varepsilon}|u_\varepsilon(\vv)|^q\le M\,,
	\]
	for every $\varepsilon>0$ and every ground state $u_\varepsilon$ of $ \E_{\G_\varepsilon}\left(\frac{2\mu}{\varepsilon}\right)$.
\end{lem}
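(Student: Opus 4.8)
The plan is to set $K_\varepsilon:=\varepsilon\|u_\varepsilon'\|_{2,\varepsilon}^2$, $P_\varepsilon:=\varepsilon\|u_\varepsilon\|_{p,\varepsilon}^p$ and $N_\varepsilon:=\varepsilon^2\sum_{\vv\in V_\varepsilon}|u_\varepsilon(\vv)|^q$ for the three quantities to be estimated, to write $c:=\#(\V_{\G_1}\cap Q_0)/\#V_0$, and to read everything off the identity
\[
\varepsilon\E_{\G_\varepsilon}\Big(\tfrac{2\mu}\varepsilon\Big)=\varepsilon E(u_\varepsilon,\G_\varepsilon)=\frac12K_\varepsilon-\frac1{2p}P_\varepsilon-\frac cqN_\varepsilon,
\]
together with the mass constraint $\varepsilon\|u_\varepsilon\|_{2,\varepsilon}^2=2\mu$. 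By Lemma \ref{lem:uplev} and $\E_{\R^2}(\mu)<0$ there is $\delta>0$ with $\varepsilon\E_{\G_\varepsilon}(2\mu/\varepsilon)\le-\delta$ for all small $\varepsilon$, so I work in that regime. Inserting the mass constraint into \eqref{GN-eps.2} gives the scale-invariant bound $\varepsilon\|u_\varepsilon\|_{r,\varepsilon}^r\lesssim\mu\,K_\varepsilon^{(r-2)/2}$ for every $r>2$ (the powers of $\varepsilon$ cancel), whence $P_\varepsilon\lesssim\mu K_\varepsilon^{(p-2)/2}$ and $\varepsilon\|u_\varepsilon\|_{q,\varepsilon}^q\lesssim\mu K_\varepsilon^{(q-2)/2}$. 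Using Lemma \ref{l.GN-grid-i} to compare $N_\varepsilon$ with $\tfrac1{2c}\varepsilon\|u_\varepsilon\|_{q,\varepsilon}^q$, and --- crucially --- estimating the resulting error $\varepsilon^2\|u_\varepsilon\|_{2(q-1),\varepsilon}^{q-1}\|u_\varepsilon'\|_{2,\varepsilon}$ with the scale-invariant inequality \eqref{GN-eps.1} rather than \eqref{GN-eps.2}, I obtain $\varepsilon^2\|u_\varepsilon\|_{2(q-1),\varepsilon}^{q-1}\|u_\varepsilon'\|_{2,\varepsilon}\lesssim\varepsilon^{2-q/2}\mu^{q/4}K_\varepsilon^{q/4}$, and therefore $N_\varepsilon\lesssim\mu K_\varepsilon^{(q-2)/2}+\varepsilon^{2-q/2}\mu^{q/4}K_\varepsilon^{q/4}$.

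The point of the last choice is that the three exponents of $K_\varepsilon$ now occurring, namely $(p-2)/2$, $(q-2)/2$ and $q/4$, are all strictly below $1$ for $p,q\in(2,4)$, and all the powers of $\varepsilon$ are nonnegative; had I used \eqref{GN-eps.2} on the error I would have found the exponent $(q-1)/2$, which exceeds $1$ exactly in the admissible range $q\in(3,4)$ and would break the argument. Granting this, the upper bounds are immediate: from $\varepsilon\E_{\G_\varepsilon}(2\mu/\varepsilon)<0$ one gets $\tfrac12K_\varepsilon<\tfrac1{2p}P_\varepsilon+\tfrac cqN_\varepsilon\le C(\mu)\big(K_\varepsilon^{(p-2)/2}+K_\varepsilon^{(q-2)/2}+K_\varepsilon^{q/4}\big)$ for $\varepsilon\le1$, and since the right-hand side is $o(K_\varepsilon)$ as $K_\varepsilon\to\infty$ this forces a uniform bound $K_\varepsilon\le M'$, which fed back into the Gagliardo--Nirenberg estimates bounds $P_\varepsilon$ and $N_\varepsilon$ from above. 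The lower bound on $K_\varepsilon$ is dual: $\varepsilon\E_{\G_\varepsilon}(2\mu/\varepsilon)\le-\delta$ yields $\tfrac1{2p}P_\varepsilon+\tfrac cqN_\varepsilon\ge\delta$, and since the left-hand side is controlled by the same $K_\varepsilon^{\gamma}$-terms with $\gamma:=\max\{(p-2)/2,(q-2)/2,q/4\}<1$, it would vanish if $K_\varepsilon\to0$, so $K_\varepsilon$ stays bounded away from $0$.

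The delicate part, which I expect to be the main obstacle, is to bound $P_\varepsilon$ and $N_\varepsilon$ from below \emph{separately}, since so far I only control their sum. For $N_\varepsilon$ I argue by contradiction: if $N_{\varepsilon}\to0$ along a subsequence, the identity above reads $\varepsilon\E_{\G_\varepsilon}(2\mu/\varepsilon)=\varepsilon\big(\tfrac12\|u_\varepsilon'\|_{2,\varepsilon}^2-\tfrac1{2p}\|u_\varepsilon\|_{p,\varepsilon}^p\big)+o(1)$, i.e. up to $o(1)$ it is the energy of $u_\varepsilon$ for the single standard nonlinearity with $\alpha=\tfrac12$, hence not smaller than the associated ground state level on $\G_\varepsilon$; by the single-nonlinearity singular limit of \cite{D24} this level tends to $\inf_{H^1_\mu(\R^2)}\overline E_p(\cdot,\R^2)$, so that $\liminf\varepsilon\E_{\G_\varepsilon}(2\mu/\varepsilon)\ge\inf\overline E_p>\E_{\R^2}(\mu)$ by \eqref{eq:2<1}, against Lemma \ref{lem:uplev}. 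For $P_\varepsilon$ I do the same after first using Lemma \ref{l.GN-grid-i} (whose error is now $o(1)$ since $K_\varepsilon$ is already bounded) to rewrite $\tfrac cqN_\varepsilon=\tfrac1{2q}\varepsilon\|u_\varepsilon\|_{q,\varepsilon}^q+o(1)$: if $P_\varepsilon\to0$ then $\varepsilon\E_{\G_\varepsilon}(2\mu/\varepsilon)$ becomes, up to $o(1)$, the single standard $q$-nonlinearity energy, and \cite{D24} with exponent $q$ together with \eqref{eq:2<1} again gives a contradiction. The one point that must be checked with care in this last step is that the normalization $\alpha=\tfrac12$, $\beta=c\varepsilon$, mass $2\mu/\varepsilon$ is exactly the one under which \cite{D24} delivers the limit functionals $\overline E_p,\overline E_q$ with the standard coefficients $1/p,1/q$.
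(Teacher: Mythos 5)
Your proposal is correct and takes essentially the same route as the paper's own proof: the negativity of the energy combined with \eqref{GN-eps.2} for the $L^p$ term and with Lemma \ref{l.GN-grid-i} plus the scale-invariant inequality \eqref{GN-eps.1} for the vertex term (exactly the choice the paper makes, for the same reason you flag) yields the two-sided bounds on $\varepsilon\|u_\varepsilon'\|_{2,\varepsilon}^2$, and the separate lower bounds on the two nonlinear terms follow by the same contradiction argument, invoking \cite[Theorem 2.2]{D24} and \eqref{eq:2<1} against Lemma \ref{lem:uplev}. The only difference is presentational: the paper handles the two contradiction cases jointly through a minimum, while you treat them one at a time, and the normalization issue you flag at the end is indeed exactly the form in which the paper quotes \cite{D24}.
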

\begin{proof}
	By Theorem \ref{thm:GSZ2} we have $\E_{\G_\varepsilon}\left(\frac{2\mu}{\varepsilon}\right)<0$ for every $p,q\in(2,4)$ and every $\mu>0$ and $\varepsilon>0$. If $u_\varepsilon\in H_{\frac{2\mu}{\varepsilon}}^1(\G_\varepsilon)$ satisfies $E(u_\varepsilon,\G_\varepsilon)=\E_{\G_\varepsilon}\left(\frac{2\mu}{\varepsilon}\right)$, the negativity of the ground state level yields
	\begin{equation}
	\label{eq:negE}
	\frac{\varepsilon}{2}\|u^\prime_\varepsilon\|_{2,\varepsilon}^2<\frac{\varepsilon}{2p}\|u_\varepsilon\|_{p,\varepsilon}^p+\varepsilon^2\frac{\#\left(\V_{\G_1}\cap Q_0\right)}{q\#V_0}\sum_{\vv\in V_\varepsilon}|u_\varepsilon(\vv)|^q\,. 
	\end{equation}
	Now, by \eqref{GN-eps.2}
	\begin{equation}
		\label{proof.eps-bound.p}
		\frac{\varepsilon}{2p}\|u_\varepsilon\|_{p,\varepsilon}^p\le \varepsilon C_p\varepsilon^{\frac{p}{2}-1}\|u_\varepsilon\|_{2,\varepsilon}^2\|u_\varepsilon^\prime\|_{2,\varepsilon}^{p-2}=2C_p\mu\left(\varepsilon\|u_\varepsilon^\prime\|_{2,\varepsilon}^2\right)^{\frac{p}{2}-1},
	\end{equation}
	for a suitable constant $C_p>0$ depending only on $p$, whereas by \eqref{eq-GNgrid2per}
	\begin{equation}
	\label{eq:upes2}
	\varepsilon^2\sum_{\vv\in V_\varepsilon}|u_\varepsilon(\vv)|^q\lesssim \varepsilon\|u_\varepsilon\|_{q,\varepsilon}^q +\varepsilon^2\|u_\varepsilon\|_{2(q-1),\varepsilon}^{q-1}\|u_\varepsilon^\prime\|_{2,\varepsilon}. 
	\end{equation}
	Since by \eqref{GN-eps.1} we have
	\[
	\varepsilon^2\|u_\varepsilon\|_{2(q-1),\varepsilon}^{q-1}\|u_\varepsilon^\prime\|_{2,\varepsilon}\lesssim \varepsilon^2\|u_\varepsilon\|_{2,\varepsilon}^{\frac{q}{2}}\|u_\varepsilon^\prime\|_{2,\varepsilon}^{\frac{q}{2}}=\mu^{\frac{q}{4}}\varepsilon^\frac{4-q}{2}\left(\varepsilon\|u_\varepsilon^\prime\|_{2,\varepsilon}^2\right)^{\frac{q}{4}}\,,
	\]
	and, arguing as in \eqref{proof.eps-bound.p}, it holds
	\[
	 \varepsilon
	\|u_\varepsilon\|_{q,\varepsilon}^q\lesssim \left(\varepsilon\|u_\varepsilon^\prime\|_{2,\varepsilon}^2\right)^{\frac{q}{2}-1}\,,
	\]
	combining with \eqref{eq:upes2} leads to
	\begin{equation}
		\label{proof.eps-bound.q}
		\varepsilon^2\sum_{\vv\in V_\varepsilon}|u_\varepsilon(\vv)|^q\lesssim (\varepsilon\|u_\varepsilon^\prime\|_{2,\varepsilon}^2)^\frac{q-2}{2} + \varepsilon^\frac{4-q}{2}(\varepsilon\|u^\prime_\varepsilon\|_{2,\varepsilon}^2)^{\frac{q}{4}}\,.
	\end{equation} 
	Hence, letting $t:=\varepsilon\|u_\varepsilon^\prime\|_{2,\varepsilon}^2$ and combining \eqref{eq:negE}, \eqref{proof.eps-bound.p} and \eqref{proof.eps-bound.q} gives
	\[
	\frac{1}{2}t\le 2C_p\mu t^\frac{p-2}{2} + c_1t^\frac{q-2}{2} + c_2\varepsilon^\frac{4-q}{2}t^\frac{q}{4}
	\]
	for suitable constants $c_1,c_2>0$, that together with $p\in(2,4)$, $q\in(2,4)$ implies
	\[
	\varepsilon\|u^\prime\|_{2,\varepsilon}^2\lesssim 1\,,
	\]
	in turn yielding 
	\[
	\varepsilon\|u_\varepsilon\|_{p,\varepsilon}^p\,,\,\varepsilon^2\sum_{\vv\in V_\varepsilon}|u_\varepsilon(\vv)|^q\lesssim 1
	\]
	by \eqref{proof.eps-bound.p} and \eqref{proof.eps-bound.q}.
	
	As for the estimates from below, note first that, by Lemma \ref{lem:uplev}, there exists $K>0$ independent of $\varepsilon$ such that $\varepsilon E(u_\varepsilon,\G_\varepsilon)<-K$. Denoting again $t:=\varepsilon\|u_\varepsilon^\prime\|_{2,\varepsilon}^2$ and exploiting \eqref{proof.eps-bound.p} and \eqref{proof.eps-bound.q}, we then have
	\begin{equation}\label{proof.bound.}
		K<\frac{\varepsilon}{2p}\|u_\varepsilon\|_{p,\varepsilon}^p+\varepsilon^2\frac{\#\left(\V_{\G_1}\cap Q_0\right)}{q\#V_0}\sum_{\vv\in V_\varepsilon}|u_\varepsilon(\vv)|^q\lesssim t^\frac{p-2}{2} + t^\frac{q-2}{2} + t^\frac{q}{4},
	\end{equation}
	that is $ \varepsilon\|u_\varepsilon^\prime\|_{2,\varepsilon}^2\gtrsim1. $
	Assume now by contradiction that either
	\[
	\liminf_{\varepsilon\to0^+}\varepsilon\|u_\varepsilon\|_{p,\varepsilon}^p=0, \qquad\text{or}\qquad\liminf_{\varepsilon\to0^+}\varepsilon^2\sum_{\vv\in V_\varepsilon}|u_\varepsilon(\vv)|^q=0\,.
	\]
	Then (recalling again \eqref{eq-GNgrid2per}) we would have
	\[
	\begin{split}
		&\liminf_{\varepsilon\to0}\varepsilon\E_{\G_\varepsilon}\left(\frac{2\mu}\varepsilon\right)\\[.2cm]
		&\quad\geq \min\left\{\liminf_{\varepsilon\to0}\left(\frac\varepsilon2\|u_\varepsilon'\|_{2,\varepsilon}^2-\frac{\varepsilon}{2p}\|u_\varepsilon\|_{p,\varepsilon}^p\right), \liminf_{\varepsilon\to0}\left(\frac\varepsilon2\|u_\varepsilon'\|_{2,\varepsilon}^2-\varepsilon^2\frac{\#\left(\V_{\G_1}\cap Q_0\right)}{q\#V_0}\sum_{\vv\in V_\varepsilon}|u_\varepsilon(\vv)|^q\right)\right\}\\[.2cm]
		&\quad\geq \min\left\{\liminf_{\varepsilon\to0}\varepsilon\left(\frac12\|u_\varepsilon'\|_{2,\varepsilon}^2-\frac1{2p}\|u_\varepsilon\|_{p,\varepsilon}^p\right), \liminf_{\varepsilon\to0}\varepsilon\left(\frac12\|u_\varepsilon'\|_{2,\varepsilon}^2-\frac1{2q}\|u_\varepsilon\|_{q,\varepsilon}^q\right)\right\}\\[.2cm]
		&\quad\geq \min\left\{\liminf_{\varepsilon\to0}\varepsilon\inf_{v\in H_{\frac{2\mu}{\varepsilon}}^1(\G_\varepsilon)}\left(\frac12\|v'\|_{2,\varepsilon}^2-\frac1{2p}\|v\|_{p,\varepsilon}^p\right), \liminf_{\varepsilon\to0}\varepsilon\inf_{v\in H_{\frac{2\mu}{\varepsilon}}^1(\G_\varepsilon)}\left(\frac12\|v'\|_{2,\varepsilon}^2-\frac1{2q}\|v\|_{q,\varepsilon}^q\right)\right\}\,.
	\end{split}
	\]
	However, since by \cite[Theorem 2.2]{D24} we known that
	\[
	\lim_{\varepsilon\to0}\varepsilon\inf_{v\in H_{\frac{2\mu}{\varepsilon}}^1(\G_\varepsilon)}\left(\frac12\|v'\|_{2,\varepsilon}^2-\frac1{2r}\|v\|_{r,\varepsilon}^r\right)=\inf_{w\in H_\mu^1(\R^2)}\overline{E}_r(w,\R^2)
	\]
	for every $r\in(2,4)$, with $\overline E_r(\cdot,\R^2)$ defined by \eqref{eq:Ebar}, combining with Lemma \ref{lem:uplev} and \eqref{eq:2<1} would imply
	\[
	\E_{\R^2}(\mu)\geq\liminf_{\varepsilon\to0}\varepsilon\E_{\G_\varepsilon}\left(\frac{2\mu}{\varepsilon}\right)\geq\min\left\{\inf_{w\in H_\mu^1(\R^2)}\overline{E}_p(w,\R^2),\,\inf_{w\in H_\mu^1(\R^2)}\overline{E}_q(w,\R^2)\right\}>\E_{\R^2}(\mu)\,,
	\]
	i.e. the contradiction that allow us to conclude.
\end{proof}

\begin{proof}[Proof of Theorem \ref{thm:limZ2}]
	Let $u_\varepsilon\in H_{\frac{2\mu}{\varepsilon}}^1(\G_\varepsilon)$ be such that $E(u_\varepsilon,\G_\varepsilon)=\E_{\G_\varepsilon}\big(\frac{2\mu}{\varepsilon}\big)$, and let $\mathcal{A}u_\varepsilon:\R^2\to\R$ be its piecewise affine extension to $\R^2$ as defined in \cite[Section 2]{D24}. Combining Lemma \ref{lem:aprZ2} with \cite[Lemma 6.1]{D24} gives immediately
	\begin{equation}
		\label{eq:AL2Lp}
	\begin{split}
		\left|\|\mathcal{A}u_\varepsilon\|_{L^2(\R^2)}^2-\frac\varepsilon2\|u_\varepsilon\|_{2,\varepsilon}^2\right|\lesssim\varepsilon, \quad 	\left|\|\mathcal{A}u_\varepsilon\|_{L^p(\R^2)}^p-\frac\varepsilon2\|u_\varepsilon\|_{p,\varepsilon}^p\right|\lesssim\varepsilon\quad\text{as }\varepsilon\to0\,,
	\end{split}
	\end{equation}
	whereas the definition of $\mathcal{A}u_\varepsilon$ ensures (see e.g. \cite[Lemma 4.4]{D24})
	\begin{equation}
	\label{eq:gradA}
	\|\nabla \mathcal{A}u_\varepsilon\|_{L^2(\R^2)}^2\lesssim\varepsilon\|u_\varepsilon'\|_{2,\varepsilon}^2\,.
	\end{equation}
	Moreover, since $u_\varepsilon$ and $\mathcal{A}u_\varepsilon$ coincide on $\V_{\G_\varepsilon}$, denoting by $\widetilde{u}_\varepsilon$ the restriction of $\mathcal{A}u_\varepsilon$ to $\G_\varepsilon$ and using Lemma \ref{l.GN-grid-i} and Lemma \ref{l.GN-eps.} we obtain
	\[
	\begin{split}
	\left|\frac\varepsilon2\|\widetilde{u}_\varepsilon\|_{q,\varepsilon}^q-\varepsilon^2\frac{\#\left(\V_{\G_1}\cap Q_0\right)}{\#V_0}\sum_{\vv\in V_\varepsilon}|u_\varepsilon(\vv)|^q\right|&\,=\left|\frac\varepsilon2\|\widetilde{u}_\varepsilon\|_{q,\varepsilon}^q-\varepsilon^2\frac{\#\left(\V_{\G_1}\cap Q_0\right)}{\#V_0}\sum_{\vv\in V_\varepsilon}|\widetilde{u}_\varepsilon(\vv)|^q\right|\\[.2cm]
	&\,\lesssim \varepsilon^2\|\widetilde{u}_\varepsilon\|_{2(q-1),\varepsilon}^{q-1}\|\widetilde{u}_\varepsilon^\prime\|_{2,\varepsilon}\lesssim\varepsilon^{\frac q2+1}\|\widetilde{u}_\varepsilon\|_{2,\varepsilon}\|\widetilde{u}_\varepsilon'\|_{2,\varepsilon}^{q-1}\\[.2cm]
	&\,\lesssim\varepsilon^{\frac q2+1}\|u_\varepsilon\|_{2,\varepsilon}\|u_\varepsilon'\|_{2,\varepsilon}^{q-1}\lesssim \varepsilon\quad\text{as }\varepsilon\to0
	\end{split}
	\]
	(the inequalities $\|\widetilde{u}_\varepsilon\|_{2,\varepsilon}\leq(1+o(1))\|u_\varepsilon\|_{2,\varepsilon}$, $\|\widetilde{u}_\varepsilon'\|_{2,\varepsilon}\leq\|u_\varepsilon'\|_{2,\varepsilon}$ following directly by the definition of $\widetilde{u}_\varepsilon$, see e.g. \cite[Eq. (26) and Lemmas 4.2-4.3]{D24}). Similarly, arguing as in the first part of the proof of \cite[Lemma 4.1]{D24}, since $\mathcal{A}u_\varepsilon\in H^1(\R^2)$ we have
	\begin{multline*}
	\left|\|\mathcal{A}u_\varepsilon\|_{L^q(\R^2)}^q-\frac\varepsilon2\|\widetilde{u}_\varepsilon\|_{q,\varepsilon}^q\right|\lesssim\varepsilon\|\mathcal{A}u_\varepsilon\|_{L^{2(q-1)}(\R^2)}^{q-1}\|\nabla\mathcal{A}u_\varepsilon\|_{L^2(\R^2)}\\[.2cm]
	\lesssim\varepsilon\|\mathcal{A}u_\varepsilon\|_{L^2(\R^2)}\|\nabla\mathcal{A}u_\varepsilon\|_{L^2(\R^2)}^{q-1}\lesssim\varepsilon^{\frac q2+1}\|u_\varepsilon\|_{2,\varepsilon}\|u_\varepsilon'\|_{2,\varepsilon}^{q-1}\lesssim\varepsilon\,,
	\end{multline*}
	where we made use of \eqref{eq:AL2Lp}, \eqref{eq:gradA} and Lemma \ref{lem:aprZ2}. Summing up, we have
	\[
	\left|\|\mathcal{A}u_\varepsilon\|_q^q-\varepsilon^2\frac{\#\left(\V_{\G_1}\cap Q_0\right)}{\#V_0}\sum_{\vv\in V_\varepsilon}|u_\varepsilon(\vv)|^q\right|\lesssim\varepsilon\qquad\text{as }\varepsilon\to0\,,
	\]
	that, setting 
	\[
	v_\varepsilon:=\sqrt{\frac\mu{\|\mathcal{A}u_\varepsilon\|_2^2}}\,\mathcal{A}u_\varepsilon
	\]
	and coupling with \eqref{eq:AL2Lp}, \eqref{eq:gradA} and Lemma \ref{lem:uplev} gives
	\[
	\E_{\R^2}(\mu)\leq\liminf_{\varepsilon\to0}E(v_\varepsilon,\R^2)\leq\liminf_{\varepsilon\to0}\varepsilon E(u_\varepsilon,\G_\varepsilon)=\liminf_{\varepsilon\to0}\varepsilon\E_{\G_\varepsilon}\left(\frac{2\mu}{\varepsilon}\right)\le\E_{\R^2}(\mu)\,.
	\]
	Hence, $\left(v_\varepsilon\right)_\varepsilon\subset H_\mu^1(\R^2)$ is a minimizing sequence for $\E_{\R^2}(\mu)$. Since $E(\cdot,\R^2)$ is invariant by translations, with no loss of generality we can assume that for every $\varepsilon$ it holds
	\begin{equation}
		\label{eq:AL2R0}
	\|v_\varepsilon\|_{L^2(R_{0,0})}=\max_{(i,j)\in\Z^2}\|v_\varepsilon\|_{L^2(R_{i,j})}\,,
	\end{equation}
	where $R_{i,j}=\left(-\frac12+i,\frac12+i\right)\times\left(-\frac12+j,\frac12+j\right)$ for every $(i,j)\in\Z^2$. It is then standard to show that, up to subsequences, $v_\varepsilon\to u$ in $H^1(\R^2)$ as $\varepsilon\to0$, with $E(u,\R^2)=\E_{\R^2}(\mu)$, and, by definition of $v_\varepsilon$, also $\mathcal{A}u_\varepsilon\to u$ in $H^1(\R^2)$. To this end, it is enough as usual to show that the weak limit $u$ of $v_\varepsilon$ satisfies $\|u\|_2^2=\mu$. To see that $\|u\|_2^2\not\in(0,\mu)$ one exploits the same argument based on the Brezis-Lieb lemma already used e.g. in the proof of Lemma \ref{l.ex.crit.} above. Furthermore, to rule out the case $u\equiv0$ on $\R^2$, it is sufficient to note that, if this were the case, by standard two-dimensional Gagliardo-Nirenberg inequalities on bounded sets of $\R^2$ and \eqref{eq:AL2R0} we would have
	\[
	\begin{split}
	\|v_\varepsilon\|_4^4=&\,\sum_{(i,j)\in\Z^2}\|v_\varepsilon\|_{L^4(R_{i,j})}^4\lesssim\sum_{(i,j)\in\Z^2}\|v_\varepsilon\|_{L^2(R_{i,j})}^2\|v_\varepsilon\|_{H^1(R_{i,j})}^2\\
	&\,\leq\|v_\varepsilon\|_{L^2(R_{0,0})}^2\|v_\varepsilon\|_{H^1(\R^2)}^2\to0\qquad\text{as }\varepsilon\to0\,,
	\end{split}
	\]
	and thus by interpolation $\|v_\varepsilon\|_r\to0$ for every $r\in(2,4)$, so that 
	\[ \E_{\R^2}(\mu)=\lim_{\varepsilon\to0}E(v_\varepsilon,\R^2)=\lim_{\varepsilon\to0}\frac12\|\nabla v_\varepsilon\|_2^2\geq0\,,
	\]
	which is a contradiction. Observe that, since each ground state of $\E_{\R^2}(\mu)$ is positive and radially symmetric non-increasing in $\R^2$ with respect to the point where it attains its $L^\infty$ norm, up to a possible further translations we obtain that the convergence of $\mathcal{A}u_\varepsilon(\cdot-x_\varepsilon)$ is to a ground state of $\E_{\R^2}$ with the required features.
\end{proof}

\section{Singular limit with $\Z$-periodic nonlinearities: proof of Theorems \ref{thm:limZ_theta}--\ref{thm:limZ_S}}
\label{sec:limZ}
Within this section we discuss the asymptotic behaviour of ground states of $E(\cdot,\G_\varepsilon)$ on grids $\G_\varepsilon$ with $\Z$-periodic point nonlinearities, proving the convergence to the limit problems $\E_{\R^2,\theta}$ and $\E_{\R^2,\theta,R}(\mu)$ as in Theorem \ref{thm:limZ_theta} and Theorem \ref{thm:limZ_S}, respectively. 

\subsection{The limit problem $\E_{\R^2,\theta}$: proof of Theorem \ref{thm:limZ_theta}}
In what follows, we take $V\subset\V_{\G_1}$ to be a fixed $\Z$-periodic set in $\G_1$, and we let $\vec{v}=(v_1,v_2)\in\Z^2$ be the vector associated to $V$ as in Definition \ref{def:Z-per}, $V_0\subset\G_1$ be the set associated to $V$ as in Remark \ref{rem:Per_dec}, and 
\[
\theta:=\begin{cases}
\text{\normalfont arctan}\frac{v_2}{v_1} & \text{if }v_1\neq0\\
\frac\pi2 & \text{if }v_1=0\,.
\end{cases}
\]
For every $\varepsilon>0$ we then set $V_\varepsilon:=\varepsilon V\subset\V_{\G_\varepsilon}$ and
\begin{equation}
\label{eq:abZ}
\alpha=\frac12\,,\qquad\beta=\frac{|\vec{v}|}{\#V_0}\,.
\end{equation}
The strategy of the proof of Theorem \ref{thm:limZ_theta} is analogous to that already developed in the previous section for Theorem \ref{thm:limZ2}. The main element of novelty comes from the fact that the limit problem $\E_{\R^2,\theta}$ now involves a singular term concentrated on a line. This not only forces us to derive new estimates to compare the point nonlinearities of ground states on grids with $L^q$ norms restricted to $s_\theta$ of their extensions in the plane, but it also requires an additional care whenever using the ground states of $\E_{\R^2,\theta}$, as they do not belong to $H^2(\R^2)$ (contrary to the ground states  of $\E_{\R^2}$).

Here, we will argue as follows. First, we will prove Theorem \ref{thm:limZ_theta} in the special case of
\begin{equation}
\label{eq:Vsimpl}
V=\Z\vec{v}=\left\{k\vec{v}\,:\,k\in\Z\right\},
\end{equation}
that is when the point nonlinearities are located only at the intersection between $\V_{\G_\varepsilon}$ and the line $s_\theta$. Second, we will show hot to reduce the problem with a general $\Z$-periodic set $V$ to the previous case, thus completing the proof of Theorem \ref{thm:limZ_theta}.

Assume, then, from now on that $V$ is as \eqref{eq:Vsimpl}, so that $V_0$ contains only the origin, and therefore the energy functional we are considering on $\G_\varepsilon$ becomes
\begin{equation}
\label{eq:EVsimpl}
E(u,\G_\varepsilon)=\frac12\|u'\|_{2,\varepsilon}^2-\frac1{2p}\|u\|_{p,\varepsilon}^p-\frac{|\vec{v}|}{q}\sum_{i\in\Z}|u(\varepsilon i\vec{v})|^q\,.
\end{equation}
We begin the discussion with the next preliminary estimate connecting the derivative of $\tau_\theta(\mathcal{A}u)$ along $s_\theta$ with that of the original function $u$ on $\G_\varepsilon$. 
\begin{lem}
	\label{l.eps.bound.grid.es.Zper.}
	For every $\varepsilon>0$ and every $u\in H^1(\G_\varepsilon)$, there results
	\[
	\big\|\big(\tau_\theta(\mathcal{A}u)\big)^\prime\big\|_{L^2(s_\theta)}^2\lesssim \varepsilon\|u^\prime\|_{2,\varepsilon}^2\,.
	\]
\end{lem}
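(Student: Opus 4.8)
The plan is to exploit that $\mathcal{A}u$ is affine on each triangle $U_{ij}^\varepsilon$, $D_{ij}^\varepsilon$, so that its trace on the line $s_\theta$ is piecewise affine and $\big(\tau_\theta(\mathcal{A}u)\big)'$ is piecewise constant along $s_\theta$; on each piece this constant is a directional derivative of $\mathcal{A}u$, which I can bound by the finite differences of $u$ across the grid edges bounding the relevant triangle. Concretely, I would first reduce to $\varepsilon=1$: setting $v(x):=u(\varepsilon x)\in H^1(\G_1)$ as in the proofs of Lemmas \ref{l.GN-grid-i}--\ref{l.GN-grid.Zper.}, one checks that $\mathcal{A}u(\cdot)=\mathcal{A}v(\cdot/\varepsilon)$, hence $\tau_\theta(\mathcal{A}u)(t)=\tau_\theta(\mathcal{A}v)(t/\varepsilon)$ in the arclength parameter $t$ on $s_\theta$, and a change of variables together with the scaling $\|u'\|_{2,\varepsilon}^2=\varepsilon^{-1}\|v'\|_{2,1}^2$ transfers the inequality from $\G_1$ to $\G_\varepsilon$. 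It therefore suffices to prove the estimate on $\G_1$.

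On $\G_1$, the key point is that $\vec{v}\in\Z^2$, so that the triangulation is invariant under the translation $P\mapsto P+\vec{v}$, which maps $s_\theta$ onto itself. Thus the finite list of triangles crossed by the segment $\{t\vec{v}_\theta:\,0\le t\le|\vec{v}|\}$ repeats periodically, and it is enough to estimate the contribution of one such period and then sum over its integer translates. Inside a triangle $T$ crossed by $s_\theta$, $\mathcal{A}v$ is affine, so $\big(\tau_\theta(\mathcal{A}v)\big)'$ is constant on $s_\theta\cap T$ and equals $\nabla(\mathcal{A}v)|_T\cdot\vec{v}_\theta$. Since two sides of $T$ lie on grid edges (one horizontal and one vertical) and the constant gradient $\nabla(\mathcal{A}v)|_T$ is determined by the differences of $v$ between the endpoints of those two edges, the fundamental theorem of calculus and Cauchy--Schwarz give $|\nabla(\mathcal{A}v)|_T\cdot\vec{v}_\theta|^2\lesssim\sum_{e}\|v'\|_{L^2(e)}^2$, the sum running over the two grid edges of $T$. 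Multiplying by $|s_\theta\cap T|\le\sqrt2$ and summing, using that each period meets only a bounded number of triangles and that each grid edge lies on at most two of them, yields $\big\|\big(\tau_\theta(\mathcal{A}v)\big)'\big\|_{L^2(s_\theta)}^2\lesssim\sum_{e\in\mathcal{S}}\|v'\|_{L^2(e)}^2\le\|v'\|_{2,1}^2$, where $\mathcal{S}$ is the set of grid edges in a fixed-width strip around $s_\theta$. This is essentially the computation behind \eqref{eq:gradA}, carried out with the one-dimensional length element along $s_\theta$ in place of the two-dimensional area element on $\R^2$; undoing the scaling then yields the stated estimate.

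The main obstacle I anticipate is purely geometric: identifying the finitely many triangles that $s_\theta$ traverses within one period and checking that both $|s_\theta\cap T|$ and the edge multiplicities are bounded uniformly in the slope determined by $\vec{v}$. Particular care is needed in the degenerate configurations where $s_\theta$ passes through grid vertices or, as when $\theta=0$ or $\theta=\tfrac\pi2$, runs along an entire grid line: there $s_\theta$ coincides with shared edges of adjacent triangles, and one must verify that $\tau_\theta(\mathcal{A}v)$ is still the single-valued, piecewise-affine interpolant of the vertex values of $v$ along $s_\theta$, so that the same edge-difference bound applies segment by segment. Once this bookkeeping is settled, the summation over the $\vec{v}$-periodic copies is immediate.
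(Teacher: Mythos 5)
Your core argument --- the derivative of $\tau_\theta(\mathcal{A}u)$ is piecewise constant along $s_\theta$, on each triangle it is bounded by the $\tfrac1\varepsilon$-scaled finite differences of $u$ at the triangle's vertices via the fundamental theorem of calculus and Cauchy--Schwarz, and the sum closes because each grid edge meets boundedly many triangles --- is exactly the paper's proof. But your scaling reduction exposes a real problem, and you gloss over it at the precise point where it bites. With $v(x)=u(\varepsilon x)$ one has $\big\|\big(\tau_\theta(\mathcal{A}u)\big)'\big\|_{L^2(s_\theta)}^2=\varepsilon^{-1}\big\|\big(\tau_\theta(\mathcal{A}v)\big)'\big\|_{L^2(s_\theta)}^2$ and $\|u'\|_{2,\varepsilon}^2=\varepsilon^{-1}\|v'\|_{2,1}^2$, so the $\varepsilon=1$ inequality $\big\|\big(\tau_\theta(\mathcal{A}v)\big)'\big\|_{L^2(s_\theta)}^2\lesssim\|v'\|_{2,1}^2$ transfers to
\[
\big\|\big(\tau_\theta(\mathcal{A}u)\big)'\big\|_{L^2(s_\theta)}^2\lesssim\|u'\|_{2,\varepsilon}^2\,,
\]
i.e.\ \emph{without} the factor $\varepsilon$. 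Your closing claim that ``undoing the scaling then yields the stated estimate'' is therefore false: both sides of the inequality you prove scale identically under $u\mapsto u(\varepsilon\,\cdot)$, whereas the stated inequality carries an extra $\varepsilon$ and is not scale-invariant.

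This mismatch cannot be repaired, because the statement as printed is false. Take $\theta=0$, fix $h>0$, and let $u\in H^1(\G_\varepsilon)$ be the tent function with $u(0,0)=h$, $u=0$ at every other vertex, linear on edges. Then $\|u'\|_{2,\varepsilon}^2=4h^2/\varepsilon$, while $\tau_0(\mathcal{A}u)$ is a tent of height $h$ and width $2\varepsilon$ on the $x$-axis, so $\big\|\big(\tau_0(\mathcal{A}u)\big)'\big\|_{L^2(s_0)}^2=2h^2/\varepsilon$; the stated bound would force $2h^2/\varepsilon\lesssim 4h^2$, which is absurd as $\varepsilon\to0$ (the same $1/\varepsilon$ rate appears with $\vec{v}=(1,1)$, where $s_\theta$ crosses triangle interiors, so this is not an artifact of the degenerate direction). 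The slip in the paper's own proof is the assertion that $\frac{1}{\varepsilon^2}\left|u(\varepsilon i,\varepsilon j)-u(\varepsilon(i+1),\varepsilon j)\right|^2$ is controlled by $\|u'\|_{L^2(\G_\varepsilon\cap D_{ij}^\varepsilon)}^2$: Cauchy--Schwarz only gives $\frac1\varepsilon\|u'\|_{L^2(\G_\varepsilon\cap D_{ij}^\varepsilon)}^2$, and this lost $1/\varepsilon$, multiplied by the intersection length $b-a\lesssim\varepsilon$, is exactly the discrepancy. So what your argument actually establishes (and what the paper's computation establishes once the constant is corrected) is the $\varepsilon$-free bound displayed above, which is the statement that should appear. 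It also suffices downstream: in the proof of Lemma~\ref{l.eps.bound-Z}, the ground-state bound $\varepsilon\|u_\varepsilon'\|_{2,\varepsilon}^2\lesssim1$ then gives $\big\|\big(\tau_\theta(\mathcal{A}u_\varepsilon)\big)'\big\|_{L^2(s_\theta)}\lesssim\varepsilon^{-1/2}$, so the error term leading to \eqref{eq:delteAueps} is $O(\varepsilon\cdot\varepsilon^{-1/2})=O(\sqrt{\varepsilon})=o(1)$, which is all that is used.
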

\begin{proof}
	For every $(i,j)\in\Z^2$, let $U_{ij}^\varepsilon$, $D_{ij}^\varepsilon$ be the sets defined in \eqref{eq:UD}. Given $(i,j)\in\Z^2$, it can be easily seen by the definition of $\mathcal{A}u$ that
	$$ \mathcal{A}u(x)=\left(-\frac{x_1}{\varepsilon}+i+1\right)u(\varepsilon i,\varepsilon j) + \left(\frac{x_2}{\varepsilon}-j\right)u(\varepsilon(i+1),\varepsilon(j+1)) + \left(\frac{x_1-x_2}{\varepsilon}-i+j\right)u(\varepsilon(i+1),\varepsilon j) $$
	for every $x=(x_1,x_2)\in D_{ij}^\varepsilon$, and
	$$ \mathcal{A}u(x)= \left(-\frac{x_2}{\varepsilon}+j+1\right)u(\varepsilon i,\varepsilon j) + \left(\frac{x_1}{\varepsilon}-i\right)u(\varepsilon(i+1),\varepsilon(j+1)) + \left(\frac{x_2-x_1}{\varepsilon}+i-j\right)u(\varepsilon i,\varepsilon(j+1))$$
	for every $x\in U_{ij}^\varepsilon$.
	
	Let now $(i,j)\in\Z^2$ be such that $|s_\theta\cap D_{ij}^\varepsilon|>0$, so that there exist $a<b$, depending on $i,j$ and such that $b-a\lesssim\varepsilon$, for which $s_\theta\cap D_{ij}^\varepsilon$ can be parameterized as $s\vec{v}$, with $s\in[a,b]$. It then follows
	\[
	\begin{split}
	\int_{s_\theta\cap D_{ij}^\varepsilon}&\,\big|\big(\tau_\theta(\mathcal{A}u)\big)^\prime\big|^2\,ds\\[.2cm]
	&\,=\left|-\frac{v_1}{\varepsilon}u(\varepsilon i,\varepsilon j) + \frac{v_2}{\varepsilon}u(\varepsilon(i+1),\varepsilon(j+1)) + \frac{v_1-v_2}{\varepsilon}u(\varepsilon(i+1),\varepsilon j)\right|^2\frac{(b-a)}{|\vec{v}|^2}\\[.2cm]
	&\,\lesssim(b-a)\int_{\G_\varepsilon\cap D_{ij}^\varepsilon}|u'|^2\,dx\lesssim\varepsilon\|u'\|_{L^2(\G_\varepsilon\cap D_{ij}^\varepsilon)}^2\,,
	\end{split}
	\]
	where we estimated $\frac{1}{\varepsilon^2}\left|u(\varepsilon i,\varepsilon j)-u(\varepsilon (i+1), \varepsilon j)\right|^2$ and $\frac{1}{\varepsilon^2}\left|u(\varepsilon(i+1),\varepsilon(j+1))-u(\varepsilon (i+1), \varepsilon j)\right|^2$ with $\|u'\|_{L^2(\G_\varepsilon\cap D_{ij}^\varepsilon)}^2$. Since an analogous estimate holds whenever $s_\theta$ intersects $U_{ij}^\varepsilon$ for some $(i,j)\in\Z^2$, summing over all values of indices for which $s_\theta$ intersects either $D_{ij}^\varepsilon$ or $U_{ij}^\varepsilon$ we conclude.
\end{proof}
We can now start to develop the proof of Theorem \ref{thm:limZ_theta} for $E(\cdot,\G_\varepsilon)$ as in \eqref{eq:EVsimpl}. We begin with an upper bound on the ground state level and suitable a priori estimates on the ground states.

\begin{lem}
	\label{l.en.upperbound-Z}
	For every $p\in(2,4)$, $q\in(2,3)$ and $\mu>0$ there results
	\[
	\varepsilon\mathcal{E}_{\G_\varepsilon}\left(\frac{2\mu}{\varepsilon}\right)\le \mathcal{E}_{\R^2,\theta}(\mu) + o(1)\qquad\text{as }\varepsilon\to0\,. 
	\]
\end{lem}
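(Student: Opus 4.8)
The plan is to build, for each $\delta>0$, a competitor on $\G_\varepsilon$ by restricting a smooth approximation of a ground state of $E_\theta(\cdot,\R^2)$, then to pass to the limit $\varepsilon\to0$ via the estimates of \cite{D24}, and only afterwards to let $\delta\to0$. Since $p\in(2,4)$ and $q\in(2,3)$, Theorem \ref{thm:exGS_theta}(i) yields a ground state $\psi_\mu\in H_\mu^1(\R^2)$ with $E_\theta(\psi_\mu,\R^2)=\E_{\R^2,\theta}(\mu)$. The difficulty flagged above is that $\psi_\mu\notin H^2(\R^2)$, so \cite[Lemma 4.1]{D24} — which compares $\varepsilon\|u_\varepsilon'\|_{2,\varepsilon}^2$ and $\frac\varepsilon2\|u_\varepsilon\|_{r,\varepsilon}^r$ with $\|\nabla\phi\|_{L^2(\R^2)}^2$ and $\|\phi\|_{L^r(\R^2)}^r$ for $\phi\in C^\infty(\R^2)\cap H^2(\R^2)$ — cannot be applied to $\psi_\mu$ directly. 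I would circumvent this by fixing $\psi^\delta\in C_0^\infty(\R^2)$ with $\|\psi^\delta\|_{L^2(\R^2)}^2=\mu$ and $\psi^\delta\to\psi_\mu$ in $H^1(\R^2)$ as $\delta\to0$, which is possible by density. The bulk terms of $E_\theta(\psi^\delta,\R^2)$ converge to those of $E_\theta(\psi_\mu,\R^2)$ by $L^2$- and $L^p$-convergence, while the trace term converges because $\|\tau_\theta(\psi^\delta-\psi_\mu)\|_{L^q(\R)}\lesssim\|\psi^\delta-\psi_\mu\|_{H^1(\R^2)}^{1-1/q}\|\psi^\delta-\psi_\mu\|_{L^2(\R^2)}^{1/q}\to0$, exactly via the chain \eqref{eq:embtrace} and the interpolation inequality used right after it. Hence $E_\theta(\psi^\delta,\R^2)\to\E_{\R^2,\theta}(\mu)$.

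For fixed $\delta$, I would set $u_\varepsilon^\delta:=\psi^\delta_{|\G_\varepsilon}$ and normalize $v_\varepsilon^\delta:=\sqrt{(2\mu/\varepsilon)/\|u_\varepsilon^\delta\|_{2,\varepsilon}^2}\,u_\varepsilon^\delta\in H_{2\mu/\varepsilon}^1(\G_\varepsilon)$. Since $\psi^\delta\in C_0^\infty(\R^2)\subset C^\infty(\R^2)\cap H^2(\R^2)$, \cite[Lemma 4.1]{D24} ensures $u_\varepsilon^\delta\in H^1(\G_\varepsilon)$ together with $\varepsilon\|(u_\varepsilon^\delta)'\|_{2,\varepsilon}^2\to\|\nabla\psi^\delta\|_{L^2(\R^2)}^2$, $\frac\varepsilon2\|u_\varepsilon^\delta\|_{2,\varepsilon}^2\to\mu$ (so the normalization factor tends to $1$), and $\frac\varepsilon2\|u_\varepsilon^\delta\|_{p,\varepsilon}^p\to\|\psi^\delta\|_{L^p(\R^2)}^p$. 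The genuinely new ingredient is the concentrated term. As $\vec{v}_\theta=\vec{v}/|\vec{v}|$, each vertex $\varepsilon i\vec{v}$ lies on $s_\theta$ at arclength $\varepsilon i|\vec{v}|$, so $u_\varepsilon^\delta(\varepsilon i\vec{v})=\psi^\delta(\varepsilon i\vec{v})=(\tau_\theta\psi^\delta)(\varepsilon i|\vec{v}|)$, and therefore $\varepsilon|\vec{v}|\sum_{i\in\Z}|u_\varepsilon^\delta(\varepsilon i\vec{v})|^q$ is exactly a Riemann sum of mesh $\varepsilon|\vec{v}|$ for $\int_{s_\theta}|\tau_\theta\psi^\delta|^q\,ds$. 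Since $\tau_\theta\psi^\delta$ is smooth and compactly supported, this sum converges to $\|\tau_\theta\psi^\delta\|_{L^q(s_\theta)}^q$; this is precisely where the calibration $\beta=|\vec{v}|$ (with $\#V_0=1$ in the reduced setting \eqref{eq:EVsimpl}) is used.

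Collecting these limits and recalling $\alpha=\frac12$, for fixed $\delta$ I would obtain
\[
\limsup_{\varepsilon\to0}\varepsilon\E_{\G_\varepsilon}\Big(\frac{2\mu}\varepsilon\Big)\le\limsup_{\varepsilon\to0}\varepsilon E(v_\varepsilon^\delta,\G_\varepsilon)=E_\theta(\psi^\delta,\R^2)\,,
\]
the equality following from the three convergences above once the normalization factor is absorbed (its relevant powers tend to $1$). Letting finally $\delta\to0$ and using $E_\theta(\psi^\delta,\R^2)\to\E_{\R^2,\theta}(\mu)$ yields the claim. The main obstacle is the absence of $H^2$-regularity of $\psi_\mu$, resolved cleanly by this double-limit/density scheme at the sole cost of the $H^1$-continuity of $u\mapsto\|\tau_\theta u\|_{L^q(s_\theta)}^q$; a more direct route that restricts $\psi_\mu$ itself and exploits $\psi_\mu^\pm\in H^2(H_\theta^\pm)$ with mixed second derivatives in $L^2(\R^2)$ by \eqref{eq-mixedsecond} would also work, but it forces one to control separately the $O(1/\varepsilon)$ cells crossed by $s_\theta$, which is why the approximation argument is preferable here.
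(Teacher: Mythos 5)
Your proposal is correct, but it takes a genuinely different route from the paper's. The paper restricts the actual ground state $\psi_\mu$ to $\G_\varepsilon$ and proves the three comparisons \eqref{eq:GSthetaG} directly: the $L^r$ terms via the $H^1\cap L^\infty$ version of \cite[Lemma 4.1]{D24}, the concentrated term via a fundamental-theorem-of-calculus estimate along $s_\theta$ using $\partial_{\vec{v}_\theta}\psi_\mu\in H^1(\R^2)$ from \eqref{eq-mixedsecond}, and the gradient term cell by cell, exploiting $\psi_\mu\in H^2(H_\theta^\pm)$ away from $s_\theta$ and disposing of the strip $N_\varepsilon$ of cells crossed by $s_\theta$ through the absolute continuity of the Lebesgue integral --- precisely the ``$O(1/\varepsilon)$ cells'' issue you chose to avoid. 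Your scheme replaces all of this with a normalized $C_0^\infty$ approximation $\psi^\delta\to\psi_\mu$ in $H^1(\R^2)$, the smooth-function version of \cite[Lemma 4.1]{D24} for the gradient and $L^r$ terms, an honest Riemann-sum argument for the vertex term, and the $H^1$-continuity of $u\mapsto\|\tau_\theta u\|_{L^q(s_\theta)}^q$ (which the paper itself establishes right after \eqref{eq:GNR2delta}) to send $\delta\to0$ at the end. The order of limits --- $\varepsilon\to0$ at fixed $\delta$, then $\delta\to0$ --- is exactly what makes the $\delta$-dependence of the constants in \cite[Lemma 4.1]{D24} harmless, and you handle this correctly, as well as the exact mass normalization and the calibration $\beta=|\vec{v}|$ in the reduced setting \eqref{eq:EVsimpl}. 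What the paper's harder route buys is a reusable quantitative template: the computation behind the second line of \eqref{eq:GSthetaG} is invoked again, applied to the piecewise-affine extensions $\mathcal{A}u_\varepsilon$ of grid ground states, to establish \eqref{eq:delteAueps} in the proof of Lemma \ref{l.eps.bound-Z} and in the conclusion of Theorem \ref{thm:limZ_theta}; your density argument proves the upper bound just as rigorously but does not produce that estimate, so the subsequent steps of the singular limit would still require the paper's direct computation (or an analogue of it for Lipschitz extensions).
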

\begin{proof}
	The argument is analogous to the one in the proof of Lemma \ref{lem:uplev}, replacing  a ground state $\phi_\mu$ of $\E_{\R^2}(\mu)$ with a ground state $\psi_\mu$ of $\E_{\R^2,\theta}(\mu)$, whose existence is guaranteed for every $p\in(2,4)$, $q\in(2,3)$ and $\mu>0$ by Theorem \ref{thm:exGS_theta}. Hence, letting $u_\varepsilon$ be the restriction of $\psi_\mu$ to $\G_\varepsilon$, if one establishes that $u_\varepsilon\in H^1(\G_\varepsilon)$ and that
	\begin{equation}
		\label{eq:GSthetaG}
		\left\{
		\begin{split}
		&\|\psi_\mu\|_{L^r(\R^2)}^r=\frac\varepsilon2\|u_\varepsilon\|_{r,\varepsilon}^r+o(1),\quad\forall r\geq2\,,\\[.2cm]
		&\|\tau_\theta\psi_\mu\|_{L^q(s_\theta)}^q=\varepsilon|\vec{v}|\sum_{i\in\Z}|u_\varepsilon(\varepsilon i\vec{v})|^q+o(1)\,,\\[.2cm]
		&\,\|\nabla\psi_\mu\|_{L^2(\R^2)}^2=\varepsilon\|u_\varepsilon'\|_{2,\varepsilon}^2+o(1)\\
		\end{split}
		\right.\qquad\text{as}\quad\varepsilon\to0.
	\end{equation}
	then the lemma is proved. 
	
	Note first that the restriction of $u_\varepsilon$ to any vertical and horizontal line in $\G_\varepsilon$ is a well-defined function in $H^1(\R)$. Indeed, if $\theta\not\in\left\{0,\frac\pi2\right\}$, each such line splits into the union of a part in $H_\theta^+$ and a part in $H_\theta^-$, and the restriction of $\psi_\mu$ to both half-lines is an $H^{\frac{3}{2}}$ function since $\psi_\mu\in H^2(H_\theta^\pm)$ by Lemma \ref{lem:reg_s}. Thus, such restrictions to the two half-lines are $H^1$ functions and this fact, combined with the global continuity of $\psi_\mu$ (see again Lemma \ref{lem:reg_s}), entails that the restriction to the whole line belongs to $H^1(\R)$. The same is true when $\theta=0$ or $\theta=\frac\pi2$ for all lines but the $x$-axis or the $y$-axis, that in these cases correspond to $s_\theta$, respectively. However, when $\theta=0$ Lemma \ref{lem:reg_s} ensures that $\partial_x\psi_\mu\in H^1(\R^2)$, so that its trace on the $x$-axis is in $H^{\frac{1}{2}}(\R)$, and thus the trace of $\psi_\mu$ is again in $H^{3/2}(\R)$, and the same is true for $\partial_y\psi_\mu$ when $\theta=\frac\pi2$. Morever, $u_\varepsilon$ is continuous on $\G_\varepsilon$, again by the continuity of $\psi_\mu$.
	
	Now, since $\psi_\mu\in H^1(\R^2)\cap L^\infty(\R^2)$, the first line of \eqref{eq:GSthetaG} is given by \cite[Lemma 4.1]{D24} (note that this is sufficient to get \cite[Eq. (21)]{D24}). As for the second line, we compute
	\[
	\begin{split}
		\left|\|\tau_\theta\psi_\mu\|_{L^q(s_\theta)}^q-\varepsilon|\vec{v}|\sum_{i\in\Z}|u_\varepsilon(\varepsilon i\vec{v})|^q\right|=&\, \left|\sum_{i\in\Z}\int_{\varepsilon i|\vec{v}|}^{\varepsilon(i+1)|\vec{v}|}\left|\psi_\mu\left(\frac{s\vec{v}}{|\vec{v}|}\right)\right|^q\,ds-\varepsilon|\vec{v}|\sum_{i\in\Z}|\psi_\mu(\varepsilon i\vec{v})|^q\right|\\
		\leq&\,\sum_{i\in\Z}\left|\int_{\varepsilon i|\vec{v}|}^{\varepsilon(i+1)|\vec{v}|}\left(\left|\psi_\mu\left(\frac{s\vec{v}}{|\vec{v}|}\right)\right|^q-|\psi_\mu(\varepsilon i\vec{v})|^q\right)\,ds\right|\\
		\lesssim&\,\varepsilon\sum_{i\in\Z}\int_{\varepsilon i|\vec{v}|}^{\varepsilon(i+1)|\vec{v}|}|\psi_\mu|^{q-1}|\partial_{\vec{v}_\theta}\psi_\mu|\,ds=\varepsilon\int_{s_\theta}|\psi_\mu|^{q-1}|\partial_{\vec{v}_\theta}\psi_\mu|\,ds\\
		\leq&\,\varepsilon\|\psi_\mu\|_{L^{2(q-1)}(s_\theta)}^{q-1}\|\partial_{\vec{v}_\theta}\psi_\mu\|_{L^2(s_\theta)}\lesssim\varepsilon\,,
	\end{split}
	\]
	where the last inequality relies on the fact that $\partial_{\vec{v}_\theta}\psi_\mu\in H^1(\R^2)$ (guaranteed again by \eqref{eq-mixedsecond}).
	
	It is left to prove the third line of \eqref{eq:GSthetaG}. To this end, for every $(i,j)\in\Z^2$ denote by $Q_{ij}^\varepsilon:=U_{ij}^\varepsilon\cup D_{ij}^\varepsilon=[\varepsilon i,\varepsilon(i+1)]\times[\varepsilon j,\varepsilon(j+1)]$ the corresponding square identified by $\G_\varepsilon$ in $\R^2$. Assume first that $Q_{ij}^\varepsilon\cap s_\theta=\emptyset$, that is for instance $Q_{ij}^\varepsilon\subset H_\theta^+$ (the other case is analogous). Denoting by $b_{ij}^\varepsilon:=[\varepsilon i,\varepsilon(i+1)]\times\left\{\varepsilon j\right\}$, we have 
	\[
	\begin{split}
	\left|\varepsilon\|u_\varepsilon'\|_{L^2(b_{ij}^\varepsilon)}^2-\|\partial_x\psi_\mu\|_{L^2(Q_{ij}^\varepsilon)}^2\right|&\,=\left|\varepsilon\int_{\varepsilon i}^{\varepsilon(i+1)}|u_\varepsilon'(x,\varepsilon j)|^2\,dx-\int_{\varepsilon i}^{\varepsilon(i+1)}\int_{\varepsilon j}^{\varepsilon(j+1)}|\partial_x\psi_\mu(x,y)|^2\,dydx\right|\\
	&\,=\left|\int_{\varepsilon i}^{\varepsilon(i+1)}\int_{\varepsilon j}^{\varepsilon(j+1)}\left(|\partial_x\psi_\mu(x,\varepsilon j)|^2-|\partial_x\psi_\mu(x,y)|^2\right)\,dxdy\right|\\
	&\,\lesssim \varepsilon\|\partial_{yx}^2\psi_\mu\|_{L^2(Q_{ij}^\varepsilon)}\|\partial_x\psi_\mu\|_{L^2(Q_{ij}^\varepsilon)}\,,
	\end{split}
	\]
	and analogously
	\[
	\left|\varepsilon\|u_\varepsilon'\|_{L^2(h_{ij}^\varepsilon)}^2-\|\partial_y\psi_\mu\|_{L^2(Q_{ij}^\varepsilon)}^2\right|\lesssim \varepsilon\|\partial_{xy}^2\psi_\mu\|_{L^2(Q_{ij}^\varepsilon)}\|\partial_y\psi_\mu\|_{L^2(Q_{ij}^\varepsilon)}
	\]
	with $h_{ij}^\varepsilon:=\left\{\varepsilon i\right\}\times[\varepsilon j,\varepsilon(j+1)]$. 
	
	Now, since $\psi_\mu\in H^2(H_\theta^+)$, if we denote by $I_\varepsilon\subset\Z^2$ the set of indices $(i,j)\in\Z^2$ for which $Q_{ij}^\varepsilon\cap s_\theta\neq\emptyset$ and we set $N_\varepsilon^+:=H_\theta^+\cap\left(\bigcup_{(i,j)\in I_\varepsilon}Q_{ij}^\varepsilon\right)$, then summing the previous estimates over all $Q_{ij}^\varepsilon$ fully contained in $H_\theta^+$ we obtain
	\[
	\left|\varepsilon\|u_\varepsilon'\|_{L^2\left(\left(\G_\varepsilon\cap H_\theta^+\right)\setminus N_\varepsilon^+\right)}^2-\|\nabla\psi_\mu\|_{L^2(H_\theta^+\setminus N_\varepsilon^+)}^2\right|\lesssim\varepsilon\|\psi_\mu\|_{H^2(H_\theta^+)}^2\,.
	\]
	As the same computations can be repeated replacing $H_\theta^+$ with $H_\theta^-$, we have
	\begin{equation}
	\label{eq:dereps}
	\left|\varepsilon\|u_\varepsilon'\|_{L^2(\G_\varepsilon\setminus N_\varepsilon)}^2-\|\nabla\psi_\mu\|_{L^2(\R^2\setminus N_\varepsilon)}^2\right|\lesssim\varepsilon\left(\|\psi_\mu\|_{H^2(H_\theta^+)}^2+\|\psi_\mu\|_{H^2(H_\theta^-)}^2\right)\,,
	\end{equation}
	where $N_\varepsilon:=\bigcup_{(i,j)\in I_\varepsilon}Q_{ij}^\varepsilon$. Clearly, an analogous argument also gives
	\[
	\left|\varepsilon\|u_\varepsilon'\|_{L^2(\G_\varepsilon\cap N_\varepsilon)}^2-\|\nabla\psi_\mu\|_{L^2(N_\varepsilon)}^2\right|\lesssim\varepsilon\left(\|\psi_\mu\|_{H^2(H_\theta^+\cap N_\varepsilon)}^2+\|\psi_\mu\|_{H^2(H_\theta^-\cap N_\varepsilon)}^2\right)\,,
	\]
	where the only possible subtlety arises when $s_\theta$ coincides with a vertical or a horizontal line, but can be nevertheless directly managed exploiting \eqref{eq-mixedsecond} (as before).
	As a consequence, the third line of \eqref{eq:GSthetaG} follows by \eqref{eq:dereps} and $\|\nabla\psi_\mu\|_{L^2(N_\varepsilon)}\to0$ as $\varepsilon\to0$ (this latter fact being ensured by $\psi_\mu\in H^1(\R^2)$ and the absolute continuity of the Lebesgue integral).
 \end{proof}
 
\begin{lem}
	\label{l.eps.bound-Z}
	For every $p\in(2,4)$, $q\in(2,3)$ and $\mu>0$ there exists $M>0$ such that
	\[
	\frac{1}{M}\le \varepsilon\|u^\prime_\varepsilon\|_{2,\varepsilon}^2\,,\,\varepsilon\|u_\varepsilon\|_{p,\varepsilon}^p\,,\,\varepsilon\sum_{i\in\Z}|u_\varepsilon(\varepsilon i\vec{v})|^q\le M
	\]
	for every $\varepsilon>0$ and every ground state $u_\varepsilon$ of $\E_{\G_\varepsilon}\big(\frac{2\mu}{\varepsilon}\big)$.
\end{lem}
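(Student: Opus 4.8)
The plan is to control every quantity through $t:=\varepsilon\|u_\varepsilon'\|_{2,\varepsilon}^2$, following the scheme of Lemma \ref{lem:aprZ2}. Since $q\in(2,3)$, Theorem \ref{thm:GSZ} gives $\E_{\G_\varepsilon}(2\mu/\varepsilon)<0$, so multiplying $E(u_\varepsilon,\G_\varepsilon)<0$ by $\varepsilon$ and recalling \eqref{eq:EVsimpl} yields $\tfrac12 t<\tfrac{\varepsilon}{2p}\|u_\varepsilon\|_{p,\varepsilon}^p+\tfrac{|\vec v|}{q}\,\varepsilon\sum_{i\in\Z}|u_\varepsilon(\varepsilon i\vec v)|^q$. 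By \eqref{GN-eps.2} and $\|u_\varepsilon\|_{2,\varepsilon}^2=2\mu/\varepsilon$ the first term is $\lesssim t^{\frac{p-2}2}$, exactly as in \eqref{proof.eps-bound.p}. For the point sum I would first invoke Lemma \ref{l.GN-grid.Zper.} to reduce it to $\|u_\varepsilon\|_{L^q(\G_\varepsilon')}^q$ plus the correction $\varepsilon\|u_\varepsilon\|_{L^{2(q-1)}(\G_\varepsilon')}^{q-1}\|u_\varepsilon'\|_{L^2(\G_\varepsilon')}$. The leading (strip) term I would estimate by the one–dimensional bound of Lemma \ref{l.GN-grid.Zper.0}, applied to $u_\varepsilon(\varepsilon\,\cdot)$ on $\tfrac1\varepsilon\G_\varepsilon'$ (a strip of $\varepsilon$–independent width, so the hypothesis of that lemma holds uniformly in $\varepsilon$), which returns $\lesssim t^{\frac{q-1}2}$; the correction I would bound by passing to the full grid and using \eqref{GN-eps.2}, whose explicit $\varepsilon$–power keeps it of the same order $\lesssim t^{\frac{q-1}2}$ (using \eqref{GN-eps.1} here would instead produce a divergent factor $\varepsilon^{1-q/2}$). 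Thus $\varepsilon\sum_i|u_\varepsilon(\varepsilon i\vec v)|^q\lesssim t^{\frac{q-1}2}$, and since $\tfrac{p-2}2,\tfrac{q-1}2<1$, the inequality $\tfrac12 t\lesssim t^{\frac{p-2}2}+t^{\frac{q-1}2}$ forces $t\lesssim1$; the two upper bounds then follow.

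For the lower bounds I would start from Lemma \ref{l.en.upperbound-Z} and $\E_{\R^2,\theta}(\mu)<0$ (recall \eqref{eq:stima}): there is $K>0$ with $\varepsilon E(u_\varepsilon,\G_\varepsilon)\le-K$ for small $\varepsilon$, whence $K\lesssim t^{\frac{p-2}2}+t^{\frac{q-1}2}$ and therefore $t\gtrsim1$, i.e. the lower bound on $\varepsilon\|u_\varepsilon'\|_{2,\varepsilon}^2$. The remaining two bounds I would obtain by contradiction. If $\varepsilon\sum_i|u_\varepsilon(\varepsilon i\vec v)|^q\to0$ along a subsequence, then $\varepsilon E(u_\varepsilon,\G_\varepsilon)\ge\varepsilon\inf_{v\in H^1_{2\mu/\varepsilon}(\G_\varepsilon)}\big(\tfrac12\|v'\|_{2,\varepsilon}^2-\tfrac1{2p}\|v\|_{p,\varepsilon}^p\big)-o(1)$, which by \cite[Theorem 2.2]{D24} converges to $\inf_{w\in H^1_\mu(\R^2)}\overline{E}_p(w,\R^2)>\E_{\R^2,\theta}(\mu)$ (again by \eqref{eq:stima}), contradicting Lemma \ref{l.en.upperbound-Z}.

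The genuinely delicate case is $\varepsilon\|u_\varepsilon\|_{p,\varepsilon}^p\to0$: here the residual concentrated term cannot be compared to a full $L^q$–norm (as in the $\Z^2$–periodic setting via Lemma \ref{l.GN-grid-i}) but only to the $L^q$–norm on the line, so I would compare with the pure trace functional $\overline{E}_q^\theta(w,\R^2):=\tfrac12\|\nabla w\|_{L^2(\R^2)}^2-\tfrac1q\|\tau_\theta w\|_{L^q(s_\theta)}^q$. Using that the piecewise–affine interpolation does not increase the Dirichlet energy, $\|\nabla\mathcal A u_\varepsilon\|_{L^2(\R^2)}^2\le\varepsilon\|u_\varepsilon'\|_{2,\varepsilon}^2$ (a sharpening of \eqref{eq:gradA} with constant one, immediate from $|\int_e u_\varepsilon'|^2\le\varepsilon\int_e|u_\varepsilon'|^2$ on each edge $e$), together with the one–sided estimate $\varepsilon|\vec v|\sum_i|u_\varepsilon(\varepsilon i\vec v)|^q\le\|\tau_\theta\mathcal A u_\varepsilon\|_{L^q(s_\theta)}^q+o(1)$ — which follows as in the proof of Lemma \ref{l.en.upperbound-Z}, the error being controlled by Lemma \ref{l.eps.bound.grid.es.Zper.} and the bound $t\lesssim1$ just obtained — one gets $\varepsilon E(u_\varepsilon,\G_\varepsilon)\ge\overline{E}_q^\theta(\mathcal A u_\varepsilon)-o(1)\ge\overline{\mathcal E}_q^\theta(\|\mathcal A u_\varepsilon\|_{L^2(\R^2)}^2)-o(1)\to\overline{\mathcal E}_q^\theta(\mu)$, where $\overline{\mathcal E}_q^\theta(\mu)$ denotes the ground state level of $\overline{E}_q^\theta$, and I have used $\|\mathcal A u_\varepsilon\|_{L^2(\R^2)}^2=\mu+o(1)$ and the continuity of $\mu\mapsto\overline{\mathcal E}_q^\theta(\mu)$. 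Since the ground state of $E_\theta(\cdot,\R^2)$ is positive with strictly positive $L^p$–norm, one has the strict inequality $\E_{\R^2,\theta}(\mu)<\overline{\mathcal E}_q^\theta(\mu)$, again contradicting Lemma \ref{l.en.upperbound-Z}. I expect this last step — establishing $\E_{\R^2,\theta}(\mu)<\overline{\mathcal E}_q^\theta(\mu)$, which ultimately rests on the compactness of $E_\theta$–minimizing sequences proved in Theorem \ref{thm:exGS_theta} — to be the main obstacle, precisely because the concentrated nonlinearity degenerates onto a lower–dimensional set and forces one to pass through the singular limit problem rather than a standard $L^q$ comparison.
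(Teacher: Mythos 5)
Your proposal reproduces the paper's own proof almost step for step: the upper bounds via negativity of the level together with \eqref{GN-eps.2}, Lemma \ref{l.GN-grid.Zper.} and Lemma \ref{l.GN-grid.Zper.0} (including the correct observation that \eqref{GN-eps.1} would produce the divergent factor $\varepsilon^{1-q/2}$, which is precisely why the paper invokes \eqref{GN-eps.2} there); the lower bound on $\varepsilon\|u_\varepsilon'\|_{2,\varepsilon}^2$ from Lemma \ref{l.en.upperbound-Z}; the lower bound on the point term by contradiction through \cite[Theorem 2.2]{D24} and \eqref{eq:stima}; and the lower bound on $\varepsilon\|u_\varepsilon\|_{p,\varepsilon}^p$ by comparison with the pure-trace functional in the plane, using Lemma \ref{l.eps.bound.grid.es.Zper.}, the one-sided analogue of \eqref{eq:delteAueps} and the constant-one gradient bound for $\mathcal{A}u_\varepsilon$. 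All of this is correct and coincides with the paper's argument.

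The one genuine flaw is your justification of the strict inequality $\E_{\R^2,\theta}(\mu)<\overline{\E}_q^{\,\theta}(\mu)$, where $\overline{\E}_q^{\,\theta}(\mu):=\inf_{v\in H^1_\mu(\R^2)}\bigl(\frac12\|\nabla v\|_{L^2(\R^2)}^2-\frac1q\|\tau_\theta v\|_{L^q(s_\theta)}^q\bigr)$. You derive it from the positivity of the ground state of $E_\theta(\cdot,\R^2)$, but this is the minimizer of the wrong functional and yields an inequality in the wrong direction: if $\psi$ is a ground state of $E_\theta(\cdot,\R^2)$, then
\[
\E_{\R^2,\theta}(\mu)=E_\theta(\psi,\R^2)=\Bigl(\tfrac12\|\nabla\psi\|_{L^2(\R^2)}^2-\tfrac1q\|\tau_\theta\psi\|_{L^q(s_\theta)}^q\Bigr)-\tfrac1p\|\psi\|_{L^p(\R^2)}^p\geq\overline{\E}_q^{\,\theta}(\mu)-\tfrac1p\|\psi\|_{L^p(\R^2)}^p\,,
\]
which gives no information about $\E_{\R^2,\theta}(\mu)<\overline{\E}_q^{\,\theta}(\mu)$. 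The non-strict bound $\E_{\R^2,\theta}(\mu)\leq\overline{\E}_q^{\,\theta}(\mu)$ is of course trivial, but strictness cannot come from a minimizer of the left-hand side: it requires a minimizer of the right-hand side, and this is exactly what the paper uses. One first checks that the proof of Theorem \ref{thm:exGS_theta}(i) extends verbatim to the functional with only the trace nonlinearity; in particular its level is still strictly negative for every $q\in(2,3)$ and $\mu>0$, as one sees from the scaling $u_\lambda(x):=\lambda u(\lambda x)$, for which the trace term scales like $\lambda^{q-1}$ against $\lambda^2$ for the kinetic term. Hence $\overline{\E}_q^{\,\theta}(\mu)$ is attained by some $w\in H_\mu^1(\R^2)$, and then
\[
\E_{\R^2,\theta}(\mu)\leq E_\theta(w,\R^2)=\overline{\E}_q^{\,\theta}(\mu)-\tfrac1p\|w\|_{L^p(\R^2)}^p<\overline{\E}_q^{\,\theta}(\mu)\,,
\]
the strictness being automatic because $\|w\|_{L^2(\R^2)}^2=\mu>0$ forces $\|w\|_{L^p(\R^2)}>0$. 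So the compactness you correctly identify as the crux must be run for the pure-trace problem $\overline{E}_q^{\,\theta}$, not for $E_\theta$; with this substitution your argument closes and is identical to the paper's.
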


\begin{proof}
	Since for this choice of $p,q$ and $\mu$ Theorem \ref{thm:GSZ} yields $E(u_\varepsilon, \G_\varepsilon)<0$, we have
	\begin{equation}
	\label{eq:negE2}
	 \frac{\varepsilon}{2}\|u_\varepsilon'\|_{2,\varepsilon}^2<\frac{\varepsilon}{2p}\|u_\varepsilon\|_{p,\varepsilon}^p+\frac{\varepsilon|\vec{v}|}{q}\sum_{i\in\Z}|u_\varepsilon(\varepsilon i\vec{v})|^q. 
	\end{equation}
	By \eqref{GN-eps.2}, there exists $C_p>0$ depending only on $p$ such that
	\begin{equation}
		\label{proof.eps-bound.p-Z}
		\frac{\varepsilon}{2p}\|u_\varepsilon\|_{p,\varepsilon}^p\le C_p\varepsilon^{\frac{p}{2}}\|u_\varepsilon\|_{2,\varepsilon}^2\|u_\varepsilon^\prime\|_{2,\varepsilon}^{p-2}=2C_p\mu\left(\varepsilon\|u_\varepsilon^\prime\|_{2,\varepsilon}^2\right)^{\frac{p}{2}-1}\,,
	\end{equation}
    whereas combining Lemma \ref{l.GN-grid.Zper.}, Lemma \ref{l.GN-grid.Zper.0} and \eqref{GN-eps.2} gives
	\begin{multline}
		\label{proof.eps-bound.q-Z}
		\varepsilon\sum_{i\in\Z}|u_\varepsilon(\varepsilon i\vec{v})|^q\lesssim\|u_\varepsilon\|_{L^q(\G_\varepsilon')}^q+\varepsilon\|u_\varepsilon\|_{L^{2(q-1)}(\G_\varepsilon')}^{q-1}\|u_\varepsilon'\|_{L^2(\G_\varepsilon')}\\
		\lesssim\varepsilon^{\frac q2}\|u_\varepsilon\|_{2,\varepsilon}\|u_\varepsilon'\|_{2,\varepsilon}^{q-1}\lesssim (\varepsilon\|u_\varepsilon^\prime\|_{2,\varepsilon}^2)^\frac{q-1}{2}\,,
	\end{multline}
	with $\G_\varepsilon'$ defined by \eqref{eq-Gprime}. Coupling \eqref{eq:negE2}, \eqref{proof.eps-bound.p-Z} and \eqref{proof.eps-bound.q-Z} with $p<4$ and $q<3$ yields 
	\[
	\varepsilon\|u^\prime\|_{2,\varepsilon}^2\,,\,\varepsilon\|u_\varepsilon\|_{p,\varepsilon}^p\,,\,\varepsilon\sum_{i\in\Z}|u_\varepsilon(\varepsilon i \vec{v})|^q\lesssim 1. 
	\]
	As for the lower bound, note that by Lemma \ref{l.en.upperbound-Z} there exists $K>0$ such that, as $\varepsilon\to0$, 
	\[ \frac{\varepsilon}{2}\|u_\varepsilon^\prime\|_{2,\varepsilon}^2-\frac{\varepsilon}{2p}\|u_\varepsilon\|_{p,\varepsilon}^p-\frac{\varepsilon|\vec{v}|}{q}\sum_{i\in \Z}|u_\varepsilon(\varepsilon i\vec{v})|^q<-K\,.
	\]
	Moving from this and arguing as in the proof of Lemma \ref{lem:aprZ2} one obtains
	\[
	\varepsilon\|u^\prime_\varepsilon\|_{2,\varepsilon}^2\,,\,\varepsilon\sum_{i\in\Z}|u_\varepsilon(\varepsilon i \vec{v})|^q\gtrsim1\,.
	\]
	To conclude, assume by contradiction that
	\[ 
	\liminf_{\varepsilon\to0^+}\varepsilon\|u_\varepsilon\|_{p,\varepsilon}^p=0\,, 
	\]
	so that, by Lemma \ref{l.en.upperbound-Z},
	\begin{equation}
	\label{eq:ass1}
	\E_{\R^2,\theta}(\mu)\geq\liminf_{\varepsilon\to0}\varepsilon\E_{\G_\varepsilon}\left(\frac{2\mu}\varepsilon\right)=\liminf_{\varepsilon\to0}\left(\frac\varepsilon2\|u_\varepsilon'\|_{2,\varepsilon}^2-\frac{\varepsilon|\vec{v}|}q\sum_{i\in\Z}|u_\varepsilon(\varepsilon i \vec{v})|^q\right)\,.
	\end{equation}
	Now, let $\mathcal{A}u_\varepsilon$ be the usual piecewise-affine extension to $\R^2$ of $u_\varepsilon$. Since $\mathcal{A}u_\varepsilon$ and $u_\varepsilon$ coincide on $\V_{\G_\varepsilon}$, adapting to $\mathcal{A}u_\varepsilon$ the computations performed with $\psi_\mu$ in the proof of Lemma \ref{l.en.upperbound-Z} to establish the second line of \eqref{eq:GSthetaG}, in view of Lemma \ref{l.eps.bound.grid.es.Zper.}, and the already proved upper bound on $\varepsilon\|u_\varepsilon'\|_{2,\varepsilon}^2$, one obtains
	\begin{equation}
	\label{eq:delteAueps}
	\|\mathcal{A}u_\varepsilon\|_{L^q(s_\theta)}^q=\varepsilon|\vec{v}|\sum_{i\in\Z}|u_\varepsilon(\varepsilon i\vec{v})|^q+o(1)\qquad\text{as}\quad\varepsilon\to0\,.
	\end{equation}
 	Since (arguing exactly as in \cite[Lemma 6.1]{D24}) the upper bound on $\varepsilon\|u_\varepsilon'\|_{2,\varepsilon}^2$ is enough to obtain also 
 	\begin{equation}
 		\label{eq:normAZ}
 		\left|\|\mathcal{A}u_\varepsilon\|_{L^r(\R^2)}^r-\frac\varepsilon2\|u_\varepsilon\|_{r,\varepsilon}^r\right|\lesssim\varepsilon\qquad\text{as}\quad\varepsilon\to0
 	\end{equation}
 	for every $r\geq2$, combining with \eqref{eq:ass1} and \eqref{eq:gradA} entails
 	\[
 	\E_{\R^2,\theta}(\mu)\geq\liminf_{\varepsilon\to0}\left(\frac12\|\nabla\mathcal{A}u_\varepsilon\|_2^2-\frac1q\|\mathcal{A}u_\varepsilon\|_{L^q(s_\theta)}^q\right)\geq\liminf_{\varepsilon\to0}\inf_{v\in H_\mu^1(\R^2)}\left(\frac12\|\nabla v\|_2^2-\frac1q\|v\|_{L^q(s_\theta)}^q\right)\,.
 	\]
	This provides the contradiction we seek, since the proof of Theorem \ref{thm:exGS_theta} above extends verbatim to show that 
	\[
	\inf_{v\in H_\mu^1(\R^2)}\left(\frac12\|\nabla v\|_2^2-\frac1q\|v\|_{L^q(s_\theta)}^q\right)
	\]
	is attained for every $q\in(2,3)$ and $\mu>0$, and, letting $w\in H_\mu^1(\R^2)$ be an associated minimizer, one obtains
	\[
	\E_{\R^2,\theta}(\mu)\leq E_\theta(w,\R^2)<\frac12\|\nabla w\|_2^2-\frac1q\|w\|_{L^q(s_\theta)}^q=\inf_{v\in H_\mu^1(\R^2)}\left(\frac12\|\nabla v\|_2^2-\frac1q\|v\|_{L^q(s_\theta)}^q\right)\,.
	\]
\end{proof}

We can now conclude the proof of Theorem \ref{thm:limZ_theta}.

\begin{proof}[Proof of Theorem \ref{thm:limZ_theta}]
	First, let $V$ be as in \eqref{eq:Vsimpl}. In this case, the claim follows arguing exactly as in the proof of Theorem \ref{thm:limZ2}. Indeed, by Lemma \ref{l.eps.bound.grid.es.Zper.}, \eqref{eq:gradA}, \eqref{eq:delteAueps} and \eqref{eq:normAZ}, if $u_\varepsilon\in H_{\frac{2\mu}{\varepsilon}}^1(\G_\varepsilon)$ is a ground state of $\E_{\G_\varepsilon}\big(\frac{2\mu}{\varepsilon}\big)$, then it follows that 
	\[
	w_\varepsilon=\sqrt{\frac\mu{\|\mathcal{A}u_\varepsilon\|_2^2}}\,\mathcal{A}u_\varepsilon
	\] 
	is a minimizing sequence in $H_\mu^1(\R^2)$ for $E_{\theta}(\cdot,\R^2)$, and to conclude it is enough to show that, possibly after a proper translation, it converges strongly to some $\psi_\mu\in H_\mu^1(\R^2)$ such that $E_\theta(\psi_\mu,\R^2)=\E_{\R^2,\theta}(\mu)$, which can be done repeating verbatim the argument in the proof of Theorem \ref{thm:exGS_theta} above.
	
	Consider now a general $\Z$-periodic set $V\subset\V_{\G_1}$, which we can assume to contain the origin of $\R^2$ with no loss of generality, and let $V_\varepsilon=\varepsilon V$. Note that to prove Theorem \ref{thm:limZ_theta} with this choice of $V_\varepsilon$ it is enough to show that
	\begin{equation}
	\label{eq:VtoVsimp}
	\varepsilon\left|\sum_{\vv\in  V_\varepsilon}|u_\varepsilon(\vv)|^q-\#V_0\sum_{i\in\Z}|u_\varepsilon(\varepsilon i\vec{v})|^q\right|=o(1)\qquad\text{as}\quad\varepsilon\to0
	\end{equation}
	for every $u_\varepsilon\in H_{\frac{2\mu}{\varepsilon}}^1(\G_\varepsilon)$ with $\varepsilon\|u_\varepsilon'\|_{2,\varepsilon}^2$ bounded from above uniformly on $\varepsilon$, where $\vec{v}$ is the vector associated to $V$ as in Definition \ref{def:Z-per} and $V_0$ is the set associated to $V$ as in Remark \ref{rem:Per_dec}. Indeed, if \eqref{eq:VtoVsimp} holds true, applying it to the ground states of $E(\cdot,\G_\varepsilon)$ in $H_{\frac{2\mu}{\varepsilon}}^1(\G_\varepsilon)$ with $V$ as in \eqref{eq:Vsimpl} allows to recover the upper bound in Lemma \ref{l.en.upperbound-Z} for $\E_{\G_\varepsilon}\big(\frac{2\mu}{\varepsilon}\big)$ with a general $V$, which can then be used to recover Lemma \ref{l.eps.bound-Z} and the rest of the proof of Theorem \ref{thm:limZ_theta} too.
	
	To prove \eqref{eq:VtoVsimp}, we recall Remark \ref{rem:Per_dec} and \eqref{eq-Gprime} to compute
	\[
	\begin{split}
		\varepsilon\left|\sum_{\vv\in V}|u_\varepsilon(\varepsilon \vv)|^q-\#V_0\sum_{i\in\Z}|u_\varepsilon(\varepsilon i\vec{v})|^q\right|&\,= \varepsilon\left|\sum_{i\in\Z}\sum_{\vv\in V_0+i\vec{v}}|u_\varepsilon(\varepsilon \vv)|^q-\#V_0\sum_{i\in\Z}|u_\varepsilon(\varepsilon i\vec{v})|^q\right|\\
		&\,=\varepsilon\left|\sum_{i\in\Z}\sum_{\vv\in V_0+i\vec{v}}\left(|u_\varepsilon(\varepsilon \vv)|^q-|u_\varepsilon(\varepsilon i\vec{v})|^q\right)\right|\\
		&\,\lesssim\varepsilon\sum_{i\in\Z}\sum_{\vv\in V_0+i\vec{v}}\int_{\varepsilon(Q_0+i\vec{v})}|u_\varepsilon|^{q-1}|u_\varepsilon'|\,dx\\
		&\,=\varepsilon\#V_0\int_{\G_\varepsilon'}|u_\varepsilon|^{q-1}|u_\varepsilon'|\,dx\lesssim\varepsilon\|u_\varepsilon\|_{L^{2(q-1)}(\G_\varepsilon')}^{q-1}\|u_\varepsilon'\|_{L^2(\G_\varepsilon')}\,.\end{split}
	\]
	Since by Lemma \ref{l.GN-grid.Zper.0} we obtain
	\[
	\|u_\varepsilon\|_{L^{(q-1)}(\G_\varepsilon')}^{2(q-1)}\lesssim\varepsilon^{q-1}\|u_\varepsilon\|_{2,\varepsilon}\|u_\varepsilon'\|_{2,\varepsilon}^{2q-3}=\sqrt{2\mu}(\varepsilon\|u_\varepsilon'\|_{2,\varepsilon}^2)^{q-\frac32}\lesssim1\,,
	\]
	plugging into the estimate above leads to
	\[
	\varepsilon\left|\sum_{\vv\in V}|u_\varepsilon(\varepsilon \vv)|^q-\#V_0\sum_{i\in\Z}|u_\varepsilon(\varepsilon i\vec{v})|^q\right|\lesssim\varepsilon\|u_\varepsilon'\|_{2,\varepsilon}\lesssim\sqrt{\varepsilon}=o(1)
	\] 
	as $\varepsilon\to0$.
\end{proof}

\subsection{The limit problem $\E_{\R^2,\theta,R}$: proof of Theorem \ref{thm:limZ_S}}
The argument is completely analogous to those already developed for Theorems \ref{thm:limZ2}--\ref{thm:limZ_theta}. The line of the proof can then be repeated with no significant changes, just keeping in mind that, to compare the $L^q$ norm on the strip $S_{\theta,R}$ of a function in $\R^2$ with the corresponding sum of $q$-powers of the values of the function at the vertices in $V_\varepsilon$ as in the hypotheses of Theorem \ref{thm:limZ_S}, it is enough to argue exactly as in the proof of Lemma \ref{l.GN-grid-i}.

\section*{Statements and Declarations}

\noindent\textbf{Conflict of interest}  The authors declare that they have  no conflict of interest.

\bigskip

\noindent\textbf{Acknowledgements.} D.B, S.D. and L.T. acknowledge that this study was carried out within the project E53D23005450006 ``Nonlinear dispersive equations in presence of singularities'' -- funded by European Union -- Next Generation EU within the PRIN 2022 program (D.D. 104 - 02/02/2022 Ministero dell'Universit\`a e della Ricerca). This manuscript reflects only the author's views and opinions and the Ministry cannot be considered responsible for them.


\begin{thebibliography}{99}
	
	\bibitem{ABD22}
	Adami R., Boni F., Dovetta S., {\em Competing nonlinearities in NLS equations as source of threshold phenomena on star graphs}, J. Funct. Anal. {\bf283}(1) (2022), 109483.
	
	\bibitem{ACFN1}
	Adami R., Cacciapuoti C., Finco D., Noja D., {\em Variational properties and orbital stability of standing waves for NLS equation on a star graph}, J. Differ. Equ. {\bf257} (2014), 3738--3777
	
	\bibitem{ACFN2}
	Adami R., Noja D., Cacciapuoti C., Finco D., {\em Constrained energy minimization and orbital stability for the NLS equation on a star graph}, Ann. Inst. Henri Poincaré C Anal. Non Linéaire {\bf 31} (2014), 1289–1310.
	
	\bibitem{AD}
	Adami R., Dovetta S., {\em One-dimensional versions of three-dimensional system: ground states for the NLS on the spatial grid}, Rend. Mat. Appl. {\bf39}(7) (2018), 181--194. 
	
	\bibitem{ADR}
	Adami R., Dovetta S., Ruighi A., {\em Quantum graphs and dimensional crossover: the honeycomb}, Comm. Appl. Ind. Math. {\bf10}(1) (2019), 109--122. 
	
	\bibitem{ADST19}
	Adami R., Dovetta S., Serra E., Tilli P., {\em Dimensional crossover with a continuum of critical exponents for NLS on doubly periodic metric graphs}, Anal. \& PDE {\bf12}(6) (2019), 1597--1612.
	
	\bibitem{AST1}
	Adami R., Serra E., Tilli P., {\em NLS ground states on graphs}, Calc. Var. PDE {\bf54} (2015), 743--761.
	
	\bibitem{AST2}
	Adami R., Serra E., Tilli P., {\em Threshold phenomena and existence results for NLS ground states on metric graphs}, J. Funct. Anal. {\bf271} (2016), 201--223.
	
	\bibitem{AST3}
	Adami R., Serra E., Tilli P., {\em Negative energy ground states for the $L^2$-critical NLSE on metric graphs}, Comm. Math. Phys. {\bf352} (2017), 387--406.
	
	\bibitem{ACT}
	Agostinho F., Correia S., Tavares H., {\em Classification and stability of positive solutions to the NLS equation on the $\mathcal T$-metric graph}, Nonlinearity {\bf37}(2) (2024), 025005.
	
	\bibitem{ACT2}
	Agostinho F., Correia S., Tavares H., {\em A comprehensive study of bound-states for the nonlinear Schr\"odinger equation on single-knot metric graphs}, arXiv:2502.14097 [math.AP] (2025).
	
	\bibitem{BMP}
	Berkolaiko G., Marzuola J.L., Pelinovsky D.E., {\em Edge-localized states on quantum graphs in the limit of large mass}, Ann. Inst. Henri Poincaré C Anal. Non Linéaire {\bf38}(5) (2021), 1295--1335.
	
	\bibitem{BDL1}
	Besse C., Duboscq R., Le Coz S., {\em Gradient flow approach to the calculation of ground states on nonlinear quantum graphs}, Ann. Henri Lebesgue {\bf5} (2022), 387--428.
	
	\bibitem{BDL2}
	Besse C., Duboscq R., Le Coz S., {\em Numerical simulations on nonlinear quantum graphs with the GraFiDi library}, SMAI J. Comput. Math. {\bf8} (2022), 1--47.
	
	\bibitem{BC23}
	Boni F., Carlone R., {\em NLSE on the half-line with point interactions}, Nonlinear Differ. Equ. Appl. {\bf30} (2023), 51.
	
	\bibitem{BD22}
	Boni F., Dovetta S., {\em Doubly nonlinear Schr\"odinger ground states on metric graphs}, Nonlinearity {\bf35} (2022), 3283--3323.
	
	\bibitem{BD21}
	Boni F., Dovetta S., {\em Ground states for a doubly nonlinear Schr\"odinger equation in dimension one}, J. Math. Anal. Appl. {\bf496}(1) (2021), 124797.
	
	\bibitem{BDS23}
	Boni F., Dovetta S., Serra E., {\em Normalized ground states for Schr\"odinger equations on metric graphs with nonlinear point defects}, J. Funct. Anal. {\bf288}(4) (2025), 110760.
	
	\bibitem{BCJS}
	Borthwick J., Chang X., Jeanjean L., Soave N., {\em Normalized solutions of $L^2$-supercritical NLS equations on noncompact metric graphs with localized nonlinearities}, Nonlinearity {\bf36} (2023), 3776--3795.
	
	\bibitem{BL83}
	Brezis H., Lieb E.H.,
	{\em A relation between pointwise convergence of functions and convergence of functionals},
	Proc. Amer. Math. Soc. {\bf 88}(3)  (1983), 486--490.
	
	\bibitem{Brock99}
	Brock F.,
	{\em Weighted Dirichlet-type inequalities for Steiner symmetrization},
	Calc. Var. PDE {\bf8} (1999), 15--25.
	
	
	\bibitem{BM18}
	Brezis H., Mironescu P.,
	{\em Gagliardo-Nirenberg inequalities and non-inequalities: the full story},
	Ann. Inst. H. Poincar\'e C Anal. Non Lin\'eaire {\bf 35}(5)  (2018), 1355--1376.
	
	\bibitem{CJS}
	Chang X., Jeanjean L., Soave N., {\em Normalized solutions of $L^2$-supercritical NLS equations on compact metric graphs}, Ann. Inst. Henri Poincaré C Anal. Non Linéaire {\bf41}(4) (2024), 933--959.
	
	\bibitem{DDGS}
	De Coster C., Dovetta S., Galant D., Serra E., {\em On the notion of ground state for nonlinear Schr\"odinger equations on metric graphs}, Calc. Var. PDE {\bf62} (2023), 159.
	
	\bibitem{D24}
	Dovetta S., {\em Singular limit of periodic metric grids}, Adv. Math. {\bf444} (2024), 109633.
	
	\bibitem{DT}
	Dovetta S., Tentarelli L., {\em Symmetry breaking in two--dimensional square grids: persistence and failure of the dimensional crossover}, J. Math. Pures Appl. {\bf160} (2022), 99--157.
	
	\bibitem{DDL}
	Duboscq R., Durand-Simonnet E., Le Coz S., {\em Ground States of the Nonlinear Schr\"odinger Equation on the Tadpole Graph with a Repulsive Delta Vertex Condition}, arXiv:2505.04250 [math.AP] (2025).
	
	\bibitem{DS25}
	Durand-Simonnet E., Shakarov B., {\em Existence and Stability of Ground States for the Defocusing Nonlinear Schr\"odinger Equation on Quantum Graphs}, arXiv:2502.18014 [math.AP] (2025).
	
	\bibitem{KMPX}
	Kairzhan A., Marangell R., Pelinovsky D.E., Xiao K.L., {\em Standing waves on a flower graph}, J. Differ. Equ. {\bf271} (2021), 719--763.
	
	\bibitem{KNP}
	Kairzhan A., Noja D., Pelinovsky D.E., {\em Standing waves on quantum graphs}, J. Phys. A Math. Theor. {\bf55} (2022), 243001.

        \bibitem{LX} Li X., Zhang Q., {\em Ground states for NLS equations with point interactions on noncompact metric graphs}, J. Math. Anal. Appl. {\bf 552}(1) (2025), 129733.
	
	\bibitem{NP}
	Noja D., Pelinovsky D.E., {\em Standing waves of the quintic NLS equation on the tadpole graph}, Calc. Var. PDE {\bf59}(5) (2020), 173.
	
	\bibitem{PS}
	Pierotti D., Soave N., {\em Ground states for the NLS equation with combined nonlinearities on non-compact metric graphs}, SIAM J. Math. Anal. {\bf54}(1) (2022), 768--790.
	
	\bibitem{PSV}
	Pierotti D., Soave N., Verzini G., {\em Local minimizers in absence of ground states for the critical NLS energy on metric graphs}, Proc. R. Soc. Edinb. Sect. A Math. {\bf151}(2) (2021), 705--733.
	
	\bibitem{Leoni}
	Leoni G., A first course in Sobolev spaces, Graduate Studies in Mathematics 105, American Mathematical Society,Providence, RI, 2009.
	
	\bibitem{S20}
	Soave N., {\em Normalized ground states for the NLS equation with combined nonlinearities}, J. Differ. Equ. {\bf269} (2020), 6941--6987.
	
\end{thebibliography}
\end{document}